\theoremstyle{plain}
\newtheorem{corollary}{Corollary}[section]
\newtheorem{theorem}[corollary]{Theorem}
\newtheorem{lemma}[corollary]{Lemma}
\newtheorem*{conjecture*}{Conjecture}
\newtheorem{proposition}[corollary]{Proposition}
\newtheorem*{theorem*}{Theorem}
\newtheorem*{lemma*}{Lemma}
\newtheorem*{definition*}{Definition}
\newtheorem*{corollary*}{Corollary}
\theoremstyle{definition}
\newtheorem{definition}[corollary]{Definition}
\newtheorem{remark}{Remark}[corollary]
\newcommand{\R}{\mathbb{R}}
\newcommand{\Z}{\mathbb{Z}} 
\newcommand{\N}{\mathbb{N}}
\newcommand{\C}{\mathcal{C}} 
\newcommand{\Cf}{\mathfrak{C}} 
\newcommand{\comp}{\mathfrak{C}} 
\newcommand{\mult}{\times}
\newcommand{\G}{\mathcal{G}} 
\newcommand{\proba}{\mathbb{P}}
\newcommand{\E}{\mathbb{E}}
\newcommand{\V}{\mathcal{V}}
\newcommand{\cube}{\square}
\newcommand{\X}{\chi}
\newcommand{\1}{\mathbf{1}}
\newcommand{\e}{\varepsilon}
\newcommand*{\GP}{\text{GP}} 
\newcommand*{\GHP}{\text{GHP}}
\newcommand{\off}{\hbox{ {\rm off} }}
\newcommand{\be}{\begin{eqnarray}} 
\newcommand{\ee}{\end{eqnarray}}
\newcommand{\eps}{\varepsilon}
\newcommand{\ind}{\mathbbm{1}}
\newcommand{\llambda}{\lambda}   
\newcommand{\kkappa}{\kappa}    
\newcommand{\ZZ}{\mathbf{Z}}  
\newcommand{\MM}{\mathbf{M}}  
\newcommand{\XX}{\mathbb{X}} 
\newcommand{\dGHP}{d_{\mathrm{GHP}}} 
\newcommand{\dP}{d_{\mathrm{P}}} 
\newcommand{\dH}{d_{\mathrm{H}}} 
\newcommand{\dGP}{d_{\mathrm{GP}}} 
\newcommand{\distGHP}{\operatorname{dist}_{\mathrm{GHP}}^4} 
\newcommand{\gp}{\text{\textsc{gp}}}
\newcommand{\bA}{\mathbf A}
\newcommand{\bB}{\mathbf B}
\newcommand{\rX}{\mathrm X}
\newcommand{\vs}{\V_{\star}}
\newcommand{\rcomp}{{\widetilde{\comp}}} 
\newcommand{\weight}[1]{\mathbf{wt(}#1\mathrm{)}}
\DeclareMathOperator*{\limit}{\longrightarrow}
\DeclareMathOperator*{\diam}{diam}
\DeclareMathOperator*{\lr}{\longleftrightarrow}
\DeclareMathOperator*{\nlr}{\centernot\longleftrightarrow}
\DeclareMathOperator{\Id}{Id}
\newcommand{\onlyon}{\hbox{ {\rm only on} }}
\DeclareMathOperator*{\slr}{\leftrightarrow}
\DeclareMathOperator*{\snlr}{\nleftrightarrow}
\newcommand{\andd}{\hbox{ {\rm and} }}
\newcommand{\pp}{\mbox{\bf p}}
\begin{document}

\title{The scaling limit of critical hypercube percolation}
\author{Arthur Blanc-Renaudie \and Nicolas Broutin \and Asaf Nachmias} 
\date{\today}
\maketitle
\begin{abstract} 
We study the connected components in critical percolation on the Hamming hypercube $\{0,1\}^m$. We show that their sizes rescaled by $2^{-2m/3}$ converge in distribution, and that, considered as metric measure spaces with the graph distance rescaled by $2^{-m/3}$ and the uniform measure, they converge in distribution with respect to the Gromov--Hausdorff--Prokhorov topology. The two corresponding limits are as in critical Erd\H{o}s--R\'enyi graphs.
\end{abstract}

\section{Introduction}\label{sec:intro}
\subsection{Main results}\label{sec:intro:main_results}
Fix $\llambda \in \R$ and consider the Erd\H{o}s--R\'enyi graph $G(n,\tfrac{1+\llambda n^{-1/3}}{n})$, that is, the graph obtained from the complete graph on $n$ vertices by independently retaining each edge with probability $\tfrac{1+\llambda n^{-1/3}}{n}$ and erasing it otherwise. Denote by $(\C_1,\C_2,\ldots)$ the connected components sorted in decreasing order according to their sizes. A celebrated theorem of Aldous \cite{Aldous97} states that $n^{-2/3} (|\C_1|,|\C_2|,\ldots) \to \ZZ_\llambda$ in distribution,
where $\ZZ_\llambda = (|\gamma_1|,|\gamma_2|,\ldots)$ are the lengths, sorted in decreasing order, of excursions above zero of the process $\{W^\llambda_t - \min_{s\in[0,t]} W^\llambda_s\}_{t \geq 0}$ where $W^\llambda_t=W_t + \llambda t - t^2/2$ and $W_t$ is standard Brownian motion. 

Our goal is to prove this for percolation on the hypercube, that is, the graph whose vertex set is $\{0,1\}^m$ and two vertices form an edge when their Hamming distance is $1$. It is not clear for which $p\in(0,1)$ one can expect such a scaling limit ($p=1/m$ does not work since it is in fact \emph{subcritical}). We show that one should choose $p$ so that the expected component size containing a vertex matches in the two models. 

Fix $\llambda\in \R$ and consider the Erd\H{o}s--R\'enyi graph $G(n,{1+\llambda n^{-1/3} \over n})$ and write $\kkappa(\lambda) := \lim_{n} \E|\C(v)|/n^{1/3}$ where $\C(v)$ is the component containing vertex $v$ (the limit exists by Corollary \ref{cor:Masses}). We now turn to percolation on the hypercube. We write $V=2^m$ and since {$f(p)=\E_p|\C(v)|$} is an increasing polynomial in $p$ with $f(0)=1$ and $f(1)=V$, we may set $p_c = p_c(\llambda,m)$ to be the unique number in $(0,1)$ with 
\be\label{def:pc} \E_{p_c} |\C(v)| = \kkappa(\lambda) V^{1/3} \ .
\ee

\begin{theorem}
\label{main_thm} Fix $\lambda \in \R$ and let $p_c=p_c(\lambda,m) \in(0,1)$ defined by \eqref{def:pc}. 
Consider the ordered connected components $(\C_1,\C_2,\ldots)$ of percolation on the hypercube with edge probability $p_c$. Then
\be\label{thm:AldousInHypercubeMain} V^{-2/3} (|\C_1|,|\C_2|,\ldots) \stackrel{\mathrm{(d)}}{\longrightarrow} \ZZ_\llambda \, ,\ee
where the convergence is with respect to $\ell^2$.
\end{theorem}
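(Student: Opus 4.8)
The plan is to prove Theorem~\ref{main_thm} via the classical scheme for establishing Aldous-type scaling limits: identify the hypercube process with a near-critical Erd\H{o}s--R\'enyi-like structure through a \emph{multiplicative coalescent} / \emph{breadth-first exploration} argument, and transfer Aldous's result by a comparison of the exploration walks. More concretely, I would work in the \emph{susceptibility window}: by the choice \eqref{def:pc}, $p_c$ is tuned so that $\E_{p_c}|\C(v)| = \kappa(\lambda) V^{1/3}$, which is precisely the analogue of the Erd\H{o}s--R\'enyi relation $\E|\C(v)| \sim \kappa(\lambda) n^{1/3}$ at $p = (1+\lambda n^{-1/3})/n$. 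The first step is therefore to establish the \emph{triangle condition} (or its consequences as developed by Hara--Slade, Borgs--Chayes--van der Hofstad--Slade--Spencer, and in the hypercube setting by van der Hofstad--Nachmias) on the hypercube at the relevant scale, which pins down the expected cluster size expansion and gives $p_c = \tfrac1m(1 + O(m^{-1}))$ together with fluctuation estimates; these inputs let one translate ``component sizes on scale $V^{2/3}$'' into the effective ER parameter $\lambda$.

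Next I would run a breadth-first (or depth-first) exploration of the percolated hypercube from a uniformly chosen vertex, recording the walk $(S_k)_{k\ge 0}$ that counts active minus explored vertices, exactly as in Aldous's proof. The key step is to show that, after rescaling time by $V^{2/3}$ and space by $V^{1/3}$, this walk converges to $W^\lambda_t = W_t + \lambda t - t^2/2$, the same limit as in the complete-graph case. This requires two ingredients: (i) a \emph{second-moment / martingale estimate} showing the increments behave, to leading order, like those of a near-critical random graph process with the same drift — here the mean-field nature of the hypercube (no short cycles below the relevant scale, by the triangle condition) is what makes the local structure indistinguishable from $G(n,p)$; and (ii) control of the \emph{depletion of vertices} — the parabolic $-t^2/2$ term — which comes from the fact that exploring $\Theta(V^{2/3})$ vertices removes a $\Theta(V^{-1/3})$ fraction of the remaining ``mass'', just as on the complete graph. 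One then invokes a general principle (e.g. Aldous's, or the functional convergence results of Broutin--Duquesne--Wang, or Joseph) that convergence of the exploration walk plus an asymptotic-negligibility condition on the largest component \emph{outside} the explored region yields \eqref{thm:AldousInHypercubeMain} in $\ell^2$.

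The \textbf{main obstacle}, I expect, is ingredient~(ii) above together with the passage from a single exploration to the \emph{ordered vector of all component sizes in $\ell^2$}. On the complete graph the depletion term is exact and elementary; on the hypercube the number of vertices ``burnt'' by an exploration is not a deterministic function of the number of steps, because the hypercube has non-trivial geometry — one must show that the set of explored vertices is sufficiently spread out that the conditional edge-probabilities to the unexplored part remain $p_c(1+o(V^{-1/3}))$ uniformly along the exploration. This is exactly where the expansion/isoperimetry of the hypercube and the triangle-condition estimates on non-backtracking paths must be leveraged, and it is the technical heart of the paper. Establishing $\ell^2$-tightness (rather than mere finite-dimensional convergence) additionally requires a uniform bound on $\sum_i |\C_i|^2 = V^{-1}\sum_v |\C(v)|^2$ after rescaling, i.e.\ control of the \emph{second susceptibility} $\E_{p_c}|\C(v)|^2 = O(V)$, which again should follow from the triangle condition but must be made quantitative at $p_c$ exactly as defined by \eqref{def:pc}.
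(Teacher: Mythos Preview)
Your proposal has a genuine gap at its core. The paper explicitly addresses your strategy in \cref{Sec:outline}: ``the familiar BFS exploration process is not Markovian in our setup and we cannot use the arsenal of tools from classical stochastic processes to prove its convergence to Brownian motion with drift. In fact, the convergence of the BFS process does not follow from our results and we do not know how to prove it.'' The problem is precisely the one you flag as the ``main obstacle'' but then assume can be handled: on the hypercube, when you explore a vertex, the number of \emph{new} neighbours uncovered depends on which vertices have already been explored, and this dependence is geometric rather than merely cardinal. You need the conditional connection probabilities to the unexplored part to be $p_c(1+o(V^{-1/3}))$ \emph{uniformly along the entire exploration of a $\Theta(V^{2/3})$-sized component}, i.e.\ you need the explored set to look like a uniform random set at every stage. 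The triangle condition and non-backtracking estimates control \emph{expectations} of diagrammatic quantities, but they do not give the pathwise control of the increments needed for a martingale functional CLT; no known argument bridges this gap.

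The paper circumvents the exploration entirely. It works at a slightly subcritical level $p_s = p_c(1 - V^{-1/3}\alpha_m^{-1/3})$, takes the large subcritical clusters as vertices of an auxiliary graph, and then \emph{sprinkles} edges from $p_s$ up to $p_c$. The key insight is that, conditionally on $H_{p_s}$, the sprinkled edges between two clusters $A,B$ appear with probability governed by $\Delta_{A,B}$ (the number of hypercube edges between them), and the paper proves (\cref{prop:DeltaBis}) that $\Delta_{A,B}/m$ is close to $|A||B|/V$ in an $\ell^2$ sense over all pairs --- not uniformly, which would be false, but well enough that the resulting ``sprinkled component graph'' $G_\comp$ can be coupled to a genuine multiplicative random graph $G_\times$ on the same vertex set. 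Aldous's Proposition~4 is then applied as a black box to $G_\times$, after verifying its hypotheses via sharp subcritical moment estimates (notably \cref{lem:sigma3FirstMom}, a delicate computation of $\E_{p_s}|\C(v)|^2$). A separate argument (\cref{Pro:PositionPc}) pins down that the sprinkling endpoint $p_c'$ coincides with $p_c$ as defined by~\eqref{def:pc}. The $\ell^2$-tightness you mention is handled not by bounding $\E_{p_c}|\C(v)|^2$ directly but by transferring tightness from $G_\times$ through the coupling. In short: rather than proving the exploration walk converges, the paper reduces to a setting (multiplicative graphs with random weights) where Aldous's result applies off the shelf.
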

This answers positively problem (3) in \cite[Section 8]{HN17} (reiterated in \cite[Problem 13.3]{HeydHofs17}). Next, consider again $G(n,{1+\llambda n^{-1/3} \over n})$ and denote by $M_i$ the metric measure space endowed on the vertices of $\C_i$ by the shortest path metric normalized by multiplying all distances by $n^{-1/3}$, and the counting measure multiplied by $n^{-2/3}$. Addario-Berry, the second author and Goldschmidt \cite{ABG12} (see also \cite{Addario2017MST}) proved that $(M_1, M_2, \ldots ) \to \MM_\llambda$ in distribution,
where $\MM_\llambda$ is a sequence of random compact metric measure space and convergence is in distribution with respect to the Gromov--Hausdorff--Prokhorov (GHP) distance on metric measure spaces; see definitions below. Our second result is that this holds for critical percolation on the hypercube.
    
\begin{theorem}
\label{thm:main_metric} Fix $\lambda \in \R$ and let $p_c=p_c(\lambda,m) \in(0,1)$ defined by \eqref{def:pc}. 
Consider the ordered connected components $(\C_1,\C_2,\ldots)$ of percolation on the hypercube with edge probability $p_c$ and let $M_i$ be the metric measure space on the vertices of $\C_i$ with the shortest path metric multiplied by $V^{-1/3}$, and the counting measure on the nodes multiplied by $V^{-2/3}$. Then,
\be \label{thm:ABBGInHypercubeMain} (M_1, M_2, \ldots ) \stackrel{\mathrm{(d)}}{\longrightarrow} \MM_\llambda \, ,\ee
where the convergence is with respect to the metric specified by  
\begin{equation}\label{eq:def_distGHP}
\distGHP(\bA,\bB)=\bigg(\sum_{i\ge 1} \dGHP(A_i,B_i)^4\bigg)^{1/4}\,,
\end{equation}
for sequences of metric measure spaces $\bA=(A_i)_{i\ge 1}$ and $\bB=(B_i)_{i\ge 1}$, and $\dGHP$ denotes the Gromov--Hausdorff--Prokhorov (GHP) distance.
\end{theorem}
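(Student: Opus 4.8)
The plan is to run a single breadth-first search (BFS) of percolation on the whole hypercube at parameter $p_c$ and to show that, suitably rescaled, it converges to the reflected tilted Brownian motion $\{W^\llambda_t-\min_{s\le t}W^\llambda_s\}_{t\ge 0}$ together with its height (genealogy) process and a Poisson decoration recording the surplus edges; feeding this into the coding machinery of Aldous~\cite{Aldous97} and Addario-Berry--Broutin--Goldschmidt~\cite{ABG12} then yields convergence to $\MM_\llambda$ in the topology of $\distGHP$. Concretely, fix an ordering of the $V=2^m$ vertices and run BFS, repeatedly popping the oldest active vertex $w$, examining the $p_c$-status of the edge from $w$ to each hypercube-neighbour not yet \emph{explored}, declaring the new such endpoints active (these give the tree edges) and recording the open edges from $w$ to already-active vertices as surplus edges. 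Writing $X_k$ for the number of active vertices after $k$ pops and $H_k$ for the BFS generation of the $k$-th popped vertex --- equivalently its graph distance to the root of its cluster --- the excursions of $X$ above its running minimum enumerate the clusters, the restriction of $H$ to such an excursion is (via the standard {\L}ukasiewicz-to-height transformation) the height process coding the spanning tree of that cluster, and the graph metric on the cluster is that tree metric with the surplus edges adjoined as unit-length shortcuts --- which become negligibly short under the rescaling of distances by $V^{-1/3}$, so that the limiting cluster is a real tree with finitely many pairs of points identified. Rescale time by $V^{-2/3}$ and both $X$ and $H$ by $V^{-1/3}$.

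The heart of the matter is to show this rescaled exploration has the same limit as for the Erd\H{o}s--R\'enyi graph. When $w$ is popped as the $k$-th vertex, its offspring count is $\mathrm{Binomial}(m-D^E_k-D^A_k,p_c)$, where $D^E_k$ and $D^A_k$ count the hypercube-neighbours of $w$ that are already explored, resp.\ active, and it emits $\mathrm{Binomial}(D^A_k,p_c)$ surplus edges, so the exploration walk increment has conditional mean $(mp_c-1)-p_c(D^E_k+D^A_k)$. A subtlety specific to the hypercube is that $mp_c-1$ is of order $m^{-1}$ --- far larger than the critical-window scale $V^{-1/3}$ --- and is essentially cancelled by the ``local'' part of $D^E_k$, coming from the parent of $w$ and from the short cycles that the BFS tree creates in the hypercube; it is the \emph{net} distance to criticality that lives at scale $V^{-1/3}$, and this is exactly what the calibration $\E_{p_c}|\C(v)|=\kkappa(\llambda)V^{1/3}$, together with the lace-expansion asymptotics of the susceptibility, pins down to match the Erd\H{o}s--R\'enyi tilt $\llambda$. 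The remaining, ``global'' part of $D^E_k$ should behave as if the explored set were a uniformly random vertex set, so that $p_c\,\E D^E_k\sim k/V$: each term is $o(1)$, but $\sum_{k\le tV^{2/3}}p_c\,\E D^E_k\sim t^2 V^{1/3}/2$ is precisely what, after rescaling, produces the depletion drift $-t^2/2$ of $W^\llambda$. Likewise $D^A_k$ should behave as if the active set were uniformly random, so that $\sum_k p_c\,\E D^A_k=\Theta(1)$, reproducing the correct, finite, number of surplus edges, which --- together with the times and heights at which they are recorded --- should converge to the Poisson decoration of $\MM_\llambda$ in~\cite{ABG12}.

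Granting these estimates, the rest is assembly. Tightness of the rescaled $(X,H)$ follows from martingale and second-moment bounds on the exploration; the limit is identified by matching the asymptotic drift and variance, with the convergence of the rescaled $H$ obtained from that of $X$ by the usual height-process arguments for near-critical exploration trees; the surplus decoration is handled by a first-moment computation together with a pairing argument for the locations; and the continuity of the coding map from a marked excursion to its metric measure space, as in~\cite{ABG12}, gives $\dGHP$-convergence of each macroscopic cluster. Joint convergence over all clusters and the passage to the topology of $\distGHP$ (the $\ell^4$ combination of the componentwise $\dGHP$ distances) are then routine given Theorem~\ref{main_thm}: it controls the total mass of clusters of rescaled size below $\eps$, and since such a cluster has rescaled diameter $O(\eps^{1/2})$ --- itself a byproduct of the exploration analysis --- their total contribution to $\sum_i\dGHP(\cdot,\cdot)^4$ is negligible.

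The main obstacle I anticipate is the uniform-in-$k$ control, with sufficient concentration, of the collision terms $D^E_k$ and $D^A_k$ along an exploration of length $\Theta(V^{2/3})$: unlike in $G(n,p)$, the explored and active sets are genuine percolation clusters rather than uniformly random vertex sets, so one must show that a vertex known to lie in the same cluster as $w$ is still adjacent to $w$ with the ``typical'' probability $(1+o(1))m/V$, and cleanly separate this from the $\Theta(m^{-1})$ local contribution. This is where the mean-field input from the lace expansion for the hypercube is indispensable --- the near-triviality of the triangle diagram $\nabla_{p_c}=1+o(1)$ and the associated two- and three-point function estimates, which bound quantities such as $\sum_{u\sim w}\ind\{u\lr v_0\off w\}$ --- and it has no counterpart in the classical Erd\H{o}s--R\'enyi proof. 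A related and possibly substantial point is that the diameter bound for clusters in the precise critical window --- large clusters have graph diameter $O(V^{1/3})$ with high probability --- must be available at the required precision; while morally a consequence of the same exploration picture, pinning it down rigorously, in the spirit of Kozma--Nachmias and van der Hofstad--Nachmias, may itself constitute a significant part of the work.
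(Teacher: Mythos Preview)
Your approach is precisely the one the paper explicitly rules out. In the introduction the authors write: ``the familiar BFS exploration process is not Markovian in our setup and we cannot use the arsenal of tools from classical stochastic processes to prove its convergence to Brownian motion with drift. In fact, the convergence of the BFS process does not follow from our results and we do not know how to prove it.'' So what you label as ``the main obstacle I anticipate'' is not a technical annoyance to be handled by lace-expansion input; it is the whole problem, and no one currently knows how to close it. Concretely, to make your argument go you would need, uniformly over the $\Theta(V^{2/3})$ steps of the exploration and conditionally on the explored and active sets, that $p_c D^E_k$ fluctuates around $k/V$ with errors summing to $o(V^{1/3})$, while simultaneously the $\Theta(m^{-1})$ local piece cancels $mp_c-1$ to precision $o(V^{-1/3})$ \emph{pathwise}. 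The triangle condition and two-point estimates you invoke are \emph{annealed} bounds on expectations; they say nothing about the conditional law of ``how many of my hypercube-neighbours lie in this particular, adaptively-grown explored set,'' and there is no known way to extract quenched, path-level concentration from them.

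The paper's route is entirely different and avoids the exploration process. They percolate first at a slightly subcritical $p_s=p_c(1-V^{-1/3}\alpha_m^{-1/3})$, record the components of size $\ge M_s$, and build two auxiliary random graphs on this vertex set: a genuine \emph{multiplicative} graph $G_\times$ with edge probabilities $1-e^{-q w_A w_B}$, and the \emph{sprinkled component graph} $G_\comp$ whose edge probabilities $p_{A,B}$ encode whether a $p_c$-open edge joins $A$ to $B$. For $G_\times$ the black-box results of \cite{Aldous97} and \cite{BhBrSeWa2014a} apply once one verifies $\sigma_3/\sigma_2^3\to 1$ etc., which reduces to moment estimates in the subcritical phase. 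The key new estimate (Proposition~\ref{prop:DeltaBis}) is that $\sum_{A\ne B}(p_{A,B}-q_{A,B})^2$ is small in expectation, which lets them couple $G_\times$ and $G_\comp$ and transfer the scaling limit; a further comparison (Section~\ref{sec:FullConvergence}) then matches distances in $G_\comp$ to graph distances in $H_{p_c}$. The point is that multiplicativity is recovered not at the level of single BFS steps but at the level of \emph{whole subcritical clusters}, where the averaging needed for ``$\Delta_{A,B}\approx m|A||B|/V$'' can actually be proved in $\ell^2$.
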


We present the definitions needed to parse \cref{thm:main_metric}, notably the GHP distance, in \cref{sec:topology}. By now it is the standard topology of metric measure spaces strong enough to yield distributional limits of essentially all large scale geometric quantities of the critical components. For example, \cref{thm:main_metric} implies the convergence of the diameter of the $i$-th largest component, for any fixed $i\geq 1$, of the typical distance or the height seen from a random vertex (rescaled by $V^{-1/3}$), to the corresponding continuous random variables. See \cite{ABG12, ANS22} and also \cite{ABGSecond,BrMa2023a,MiSe2022a} where several constructions of the limit $\MM_\llambda$ are given. It is also strong enough to imply convergence of global quantities involving more than one component, such as the maximum diameter of connected components, the length of the largest cycle, or the limiting probability that the diameter of $\C_i$ is, say, twice as large as the diameter of $\C_j$, and many more. \\

Since the results of Aldous \cite{Aldous97} and of Addario-Berry, the second author and Goldschmidt \cite{ABG12,ABGSecond}, various inhomogeneous percolation models have been shown to exhibit scaling limits as in Theorems \ref{main_thm} and \ref{thm:main_metric}, see \cite{BhHoLe2010a,BhBuWa2013a,BhSe2016a,DeLeVa2019a,BhBuWa2014a,Federico2019a,Joseph2014a,BhSeWa2014a,BhHoLe2012b,BrDuWa2021a,BrDuWa2022b,BhDhHoSe2020a,BhBrSeWa2014a,BasBhaBro2023a,Blanc-Renaudie2021a,Wang2023a,CoGo2023a,GoHaSe2022a,BhSeHo2018a}. 
This paper is the first time where such scaling limits are established in the classical setup of percolation on a deterministic transitive graph that has a non-trivial geometry. This geometry poses a significant obstacle rendering all the methods of the aforementioned papers ineffective. For example, the familiar BFS exploration process is not Markovian in our setup and we cannot use the arsenal of tools from classical stochastic processes to prove its convergence to Brownian motion with drift. In fact, { the convergence of the BFS process does not follow from our results and we do not know how to prove it}. Instead, we provide a novel method combining the theory of critical percolation in high dimensions with tools from the study of inhomogeneous percolation. We outline this idea in \cref{Sec:outline} and believe that it will have numerous further applications.

In the rest of this section we give a brief background (\cref{Sec:background}), provide a more general theorem allowing to obtain the same results for critical percolation on other underlying graphs (\cref{sec:othergraphs}) such as high degree expanders of logarithmic girth, and conclude with an outline of the proof together with the organization of the paper (\cref{Sec:outline}).

\subsection{Background} \label{Sec:background}

Percolation on the hypercube was first studied by Erd\H{o}s and Spencer in 1979 \cite{ES79}. The first result regarding the percolation phase transition (the ``appearance of a giant'') was obtained in the seminal paper of Ajtai, Koml\'os, and Szemer\'edi \cite{AKS82} where it is shown that a linear size component appears with probability tending to $1$ as $m\to \infty$ when $p={1+\delta \over m}$ for any fixed $\delta>0$, see also the work of Bollob\'{a}s, Kohayakawa and \L uczak \cite{Bol95} for a detailed behavior in the supercritical phase. When $p={1- \delta \over m}$ it is not hard to show that all components are of size that is at most logarithmic in the number of vertices.

Thus, the phase transition occurs around the point $1/m$, and it turns out that one can zoom-in and obtain a much more precise behavior of the phase transition. We refer the reader to \cite{BCHSS1, HN14, HN17} for a comprehensive explanation of this critical phenomenon and give here only a very brief outlook. When one fixes $p_c\in(0,{}1)$ as the unique solution to $\E_{p_c}|\C|=V^{1/3}$, the critical {\bf scaling window} is $p=p_c(1+\lambda V^{-1/3})$ for $\lambda \in \R$. Outside of this window we expect that the sizes of the largest connected components should be concentrated. Furthermore, below the window the ratio of the sizes of the two largest connected components should tend to one, while above the window it should tend to $0$ with high probability. Even though this picture has only been partially proved rigorously (in particular, concentration is not fully established in the subcritical phase \cite{HulNac20} and the second largest component is not understood in the supercritical case \cite{HN17}) we do not expect any interesting distributional limits in these regimes. 

Inside the scaling window, Borgs, Chayes, van der Hofstad, Slade and Spencer \cite{BCHSS1} proved that the largest connected components have size of order $\Theta(V^{2/3})$ and Kozma and the third author \cite{KN2} proved that their diameter is of order $\Theta(V^{1/3})$ (that is, the size of the largest component rescaled by $V^{-2/3}$ is a tight sequence and so is its inverse; similarly for the diameter rescaled by $V^{-1/3}$). It is also not hard to argue that the standard deviations of the diameter and the size are also of respective orders $\Theta(V^{2/3})$ for the size and $\Theta(V^{1/3})$ for the diameter. Thus one expects non-trivial scaling limits of the connected components sizes and metric space structure. The contribution of the present paper, namely Theorems~\ref{main_thm} and~\ref{thm:main_metric}, is to establish that these scaling limits are the same as the ones of the classical Erd\H{o}s--R\'enyi random graphs obtained in \cite{Aldous97} and \cite{ABG12}.

\subsection{Other underlying graphs}\label{sec:othergraphs} We now describe the basic assumptions we need for the proofs in this paper to work. This yields a more general class of graphs (that includes the hypercube) under which the conclusions of \cref{main_thm} { and \cref{thm:main_metric}} hold. In particular, the assumptions below hold for high degree expander graphs with girth that is logarithmic in the volume, { and for} products of complete graphs, see \cite[Section 1.5]{HN17} as well as \cite[Theorem 1.4]{HN17} and its proof. We conclude that the results of Theorems \ref{main_thm} and \ref{thm:main_metric} hold for these graphs.

This class of graphs was first defined in \cite{HN17} and is geometric (for example, not too many short cycles, good expansion etc.) but is best described by certain random walk conditions that are usually easy to verify. The non-backtracking random walk (NBRW) is just a simple random walk not allowed to traverse back an edge it has just traversed. That is, in the first step it chooses uniformly between the $m$ neighbors of the initial vertex and at any later steps it choose uniformly among the $m-1$ neighbors which are not the neighbor visited in the previous step. We discuss this further in \cref{sec:nbrw}. 

We write $\pp^t(u,z)$ for the probability that the non-backtracking random walk starting from $u$ is at $z$ after $t\geq 0$ steps. For any $\xi>0$ we define the $\xi$-\emph{uniform mixing time} $T_{{\rm mix}}(\xi)$ by

$$ T_{{\rm mix}}(\xi) = \min \Big \{ t : \forall x,y, \quad\frac{\pp^t(x,y) + \pp^{t+1}(x,y)}{
2} \leq (1+\xi) V^{-1} \Big \}  \, . $$

\begin{theorem}\label{main_thm_general} Let $\{H_n\}_{n\geq 1}$ be a sequence of transitive graphs with $V\to \infty$ vertices, degree $m\to \infty$ and let $\lambda \in \R$ be fixed and set $p_c=p_c(m,\lambda)$ as in \eqref{def:pc}. Assume that there exists a positive sequence $\alpha_m$ with $\alpha_m\to 0$ and $\alpha_m \geq m^{-1}$ such that if we set $m_0 = T_{{\rm mix}}(\alpha_m)$, then $m_0=O(V^{1/15}\alpha_m)$ and
\begin{enumerate}

\item 
\be\label{pc_estimate} [p_c(m-1)]^{m_0} = 1 + O(\alpha_m) \, ,\ee

\item for any two vertices $x,y$
    \be\label{eq:TriangleRW}
    \sum_{u,v} \sum_{\substack{t_1,t_2,t_3=0\\t_1+t_2+t_3\geq 3}}^{m_0} \pp^{t_1}(x,u) \pp^{t_2}(u,v) \pp^{t_3}(v,y) = O(\alpha_m/\log{V}).
    \ee

\end{enumerate}
Then \cref{main_thm}  and \cref{thm:main_metric} hold for the graph sequence $\{H_n\}$.
\end{theorem}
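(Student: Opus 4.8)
The plan is to reduce \cref{main_thm_general} to the known universality of critical rank-one inhomogeneous random graphs --- that is, to Aldous's description of the multiplicative coalescent \cite{Aldous97} for the component sizes and to the Gromov--Hausdorff--Prokhorov analogues \cite{BhSeWa2014a,BhHoLe2012b} for the metric measure structure --- by coarse-graining the graph $H_n$ at the scale $m_0=T_{\mathrm{mix}}(\alpha_m)$. This scale is the right one because it is built into the hypotheses: by \eqref{pc_estimate} the expected branching of a percolation cluster over $m_0$ steps has not yet decayed, so no geometry is created at the macroscopic scale $V^{1/3}$, while $m_0$ is long enough that after $m_0$ non-backtracking steps the walk is within $\alpha_m$ of the uniform distribution in the strong sense of $T_{\mathrm{mix}}(\cdot)$, so that long-range connections inside a cluster behave as on a complete graph. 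Concretely, I would cut each percolation cluster into \emph{blobs} of diameter $O(m_0)=o(V^{1/3})$ (using $m_0=O(V^{1/15}\alpha_m)$) with controlled sizes, and encode the macroscopic connectivity of the clusters by the graph $\rcomp$ whose vertices are the blobs, each blob carrying a weight equal to its size; the claim is that $\rcomp$ is asymptotically a critical rank-one inhomogeneous random graph in the Erd\H{o}s--R\'enyi universality class.

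The steps, in order. \emph{Step 1 (blob weights).} Using the branching-process comparison for critical high-dimensional percolation together with the triangle-type input \eqref{eq:TriangleRW}, show that the blob sizes, suitably rescaled, form a tight weight sequence with the finite second and third moments needed to be in the Erd\H{o}s--R\'enyi class, and --- crucially --- that the choice of $p_c$ through $\kkappa(\llambda)$ in \eqref{def:pc} (whose defining limit exists by \cref{cor:Masses}) places the resulting criticality parameter at the point of the multiplicative coalescent corresponding to $\llambda$. \emph{Step 2 (rank-one structure).} Show that, conditionally on the blobs, two blobs of weights $w,w'$ are joined in $\rcomp$ with a probability that factorises as $(1+o(1))\,ww'/s$ for a common normalisation $s$ (so the edges of $\rcomp$ have product-form intensities), using the near-uniformity of the percolation two-point function $\proba(x\leftrightarrow y)$ over macroscopically separated pairs, itself a consequence of \eqref{pc_estimate}--\eqref{eq:TriangleRW} and the mixing of the non-backtracking walk; and that the number of surplus connections inside a macroscopic blob-component is asymptotically Poisson with the Erd\H{o}s--R\'enyi intensity, the triangle diagram \eqref{eq:TriangleRW} bounding double connections and short cycles (the extra $\log V$ factor there giving room to control these events simultaneously across all macroscopic components). \emph{Step 3 (apply universality).} Combining Steps~1--2, $\rcomp$ is a critical rank-one inhomogeneous random graph in the Erd\H{o}s--R\'enyi class, so \cite{Aldous97} gives that its ordered component masses converge to $\ZZ_\llambda$ and \cite{BhSeWa2014a,BhHoLe2012b} that its ordered components converge, as metric measure spaces with distances rescaled appropriately and the counting measure weighted by the blob sizes, to $\MM_\llambda$; one also reads off from these results the $\ell^2$/$\ell^4$ tail estimates needed for convergence in the topologies of \eqref{thm:AldousInHypercubeMain} and \eqref{eq:def_distGHP}. \emph{Step 4 (transfer back).} The vertices of a percolation component $\C_i$ are exactly the vertices of the blobs in the corresponding component of $\rcomp$, so $V^{-2/3}|\C_i|$ equals the rescaled mass of that blob-component, which gives \eqref{thm:AldousInHypercubeMain}; and since every blob has diameter $O(m_0)=o(V^{1/3})$, the shortest-path metric on $\C_i$ differs from the blob-graph metric by a relative error tending to $0$ --- here the normalisation \eqref{pc_estimate} is exactly what makes the scaling constant equal to $V^{-1/3}$ rather than a nontrivial multiple --- so the two rescaled metric measure spaces are $o(1)$-close in $\dGHP$, giving \eqref{thm:ABBGInHypercubeMain}. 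Finally, all the above estimates are quantitative in $m_0$, $\alpha_m$ and $V$, so the hypotheses $\alpha_m\to 0$, $\alpha_m\ge m^{-1}$, $m_0=O(V^{1/15}\alpha_m)$ close the argument for the whole sequence $\{H_n\}$.

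I expect the main obstacle to be Step~2: showing that the blob graph $\rcomp$ is, to leading order, a genuine rank-one inhomogeneous random graph with asymptotically \emph{independent} edges and the Erd\H{o}s--R\'enyi surplus. The relation ``$x\leftrightarrow y$ in percolation'' is transitive, hence highly correlated, so establishing that it decorrelates into an independent-edge model once one passes to the scale $m_0$, and quantifying the error finely enough to survive the passage to the limit, is the crux; this is precisely where high-dimensional percolation technology (lace-expansion consequences, the triangle diagram \eqref{eq:TriangleRW}, near-uniformity of the two-point function) must be combined with the inhomogeneous-random-graph analysis, the usual route via convergence of the exploration process being unavailable because that process is non-Markovian here and is not even known to converge. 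A secondary difficulty, pervading Steps~1--4, is pinning the scaling \emph{constants}: the statement asks for $\ZZ_\llambda$ and $\MM_\llambda$ exactly, with the bare normalisations $V^{-2/3}$ and $V^{-1/3}$ and no spurious prefactor, and it is exactly the seemingly technical hypothesis \eqref{pc_estimate} together with the definition \eqref{def:pc} through $\kkappa(\llambda)$ that are engineered to guarantee this.
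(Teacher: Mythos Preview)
Your high-level strategy --- coarse-grain, show the resulting graph is approximately a rank-one inhomogeneous random graph, apply the Aldous/BBSW black boxes, then transfer back --- matches the paper's, and you correctly identify Step~2 as the crux. But two concrete choices in your plan would not work as stated.

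First, the coarse-graining scale. You propose blobs of diameter $O(m_0)=O(V^{1/15}\alpha_m)$; this is far too fine. A critical cluster of diameter $\Theta(V^{1/3})$ would fragment into polynomially many pieces along a geodesic alone, and neither the weight statistics of \eqref{AldousAssumptions}--\eqref{BBSW:assumptions} nor the metric comparison in Step~4 would close. The paper instead takes the blobs to be the connected components of the \emph{subcritical} configuration $H_{p_s}$ with $p_s=p_c(1-V^{-1/3}\alpha_m^{-1/3})$, of typical diameter $\Theta(\chi(p_s))=\Theta(V^{1/3}\alpha_m^{1/3})$, and obtains the blob graph $G_\comp$ by \emph{sprinkling} the edges in $(p_s,p_c]$. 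This buys two things you are missing: the blob weights are subcritical cluster sizes, whose $\sigma_2,\sigma_3$ asymptotics follow from the triangle condition (this is where \eqref{eq:TriangleRW} actually enters, via \cref{lem:sigma3FirstMom}); and, conditionally on $H_{p_s}$, the edges of $G_\comp$ are genuinely independent, because sprinkled edges are. The scale $m_0$ is used in the paper only to separate short from long connections inside the moment computations (e.g.\ \cref{lem:DeltaABSquared}), not as the coarse-graining scale.

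Second, the uniform factorisation in your Step~2 --- that $p_{A,B}=(1+o(1))\,w_Aw_B/s$ for all pairs --- is explicitly stated in the paper to be out of reach. What is provable is only the $\ell^2$ control $\sum_{A\neq B}(p_{A,B}-q_{A,B})^2=o(1)$ (\cref{prop:DeltaBis}), and converting this into closeness of $G_\comp$ and $G_\times$ requires a separate coupling argument via Frobenius norms of connection-probability matrices (\cref{sec:counting_bad_pairs}); your Poisson-surplus route is not the one taken. Relatedly, identifying the limit parameter as exactly $\lambda$ does not fall out of \eqref{def:pc} directly: the sprinkling is set up at an auxiliary $p_c'(\lambda)$ defined by \eqref{def:p'clambdaINTRO}, and only afterwards is $p_c'(\lambda)=p_c(\lambda)$ established by a susceptibility argument (\cref{Pro:PositionPc}). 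Finally, your Step~4 underestimates the metric transfer: the blob-graph distance must be multiplied by $\chi(p_s)$, not $1$, to approximate $d_\cube$, and proving $|d_\cube-\chi(p_s)d_\comp|=o(V^{1/3})$ for typical pairs (\cref{CompareCompHyp}) occupies all of \cref{Conclusion}, using that $G_\comp$ has no short cycles, that geodesics avoid small subcritical components, and a two-scale argument through the full component graph $G_{\rcomp}$. The bound ``blobs have diameter $o(V^{1/3})$'' gives only one inequality.
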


The conditions of \cref{main_thm_general} are verified for the hypercube $H=\{0,1\}^m$; we collect here the relevant references from which Theorems \ref{main_thm} and \ref{thm:main_metric} follows.

\begin{proof}[Proof of Theorems \ref{main_thm} and \ref{thm:main_metric} assuming \cref{main_thm_general}] We check that the assumptions of \cref{main_thm_general} hold for the hypercube: In \cite{HS05} the lace expansion is employed to show that for any fixed $\lambda>0$
\be\label{eq:pc} p_c(\lambda) = {1 \over m-1} + O(m^{-3}) \, ,\ee
see also \cite[Theorem 1.6]{HN14} for an elementary proof. The fact that there is no $m^{-2}$ term is crucial, since by \cite[Lemma 7.1]{HN17} we have that $T_{{\rm mix}}(m^{-1}\log m)=O(m \log m)$ so we take $\alpha_m=m^{-1} \log m$ and have that $m_0=O(m\log m)$. This verifies \eqref{pc_estimate} when $H_n=\{0,1\}^n$. Lastly, \cite[Lemma 7.1]{HN17} also verifies \eqref{eq:TriangleRW} for the hypercube.\end{proof}

\begin{remark} \cref{main_thm_general} does not include the case of the high-dimensional torus, i.e., $G=(\Z_n)^d$ where $d$ is fixed and large (or any fixed $d>6$ with a spread-out torus). In this case one expects that an analogue of Theorems \ref{main_thm} and \ref{thm:main_metric} holds at $p_c(\Z^d)$, that is, at the critical percolation probability of the infinite lattice. Unfortunately, the approach and techniques used in this paper fail for the high-dimensional torus in various locations in the proof. For instance, it is crucial for us that the degree tends to $\infty$ and that $p_c \sim 1/m$, also, that the triangle condition \eqref{eq:TriangleCondition} or \eqref{eq:TriangleRW} are small. These facts are used throughout the paper in numerous key estimates that are no longer true in the torus case. We plan to address the problem of critical percolation on the torus in a future publication. 
\end{remark}

\subsection{Outline of the proof and organization} \label{Sec:outline}

We are led by the intuition that the critical clusters are formed by subcritical clusters coalescing so that the rate of coalescence of two subcritical clusters is proportional to the product of their cardinality (that is, according to Aldous' ``multiplicative coalescent'' introduced in \cite{Aldous97}). Hence we begin by studying large clusters in the slightly subcritical phase in percolation on $H$. 
Recall the definition of $(\alpha_m)_{m\in\N}$ from Section \ref{sec:othergraphs}. (In the hypercube we take $\alpha_m=m^{-1}\log(m)$.) We set
\be\label{def:ps} p_s := p_c(1-V^{-1/3}\alpha_m^{-1/3}) \, ,\ee
and consider the connected components of $H_{p_s}$. For technical reasons we would like to study only clusters that are not too small. To that aim we set 
$$ M_s:= V^{2/3} \alpha_m^4 \, ,$$
and let $\Cf_{p_s,M_s}$ denote the set of components of $H_{p_s}$ of size at least $M_s$. We remark that at $p_s$ the largest clusters are of size $\Theta(V^{2/3} \alpha_m^{2/3} \log \alpha_m)$ \cite{HulNac20} so $\Cf_{p_s,M_s}$ includes them. It will become evident later that as the clusters ``coalesce'' the ones of size smaller than $M_s$ do not contribute significantly to critical clusters, so it is safe to ignore them. 

Next we construct two auxiliary random graphs which both have $\Cf_{p_s,M_s}$ as their vertex set. The first is what we call the {\bf multiplicative component graph} $G_\mult$. For a component $A\in \Cf_{p_s,M_s}$ we set weight
\be\label{def:weights} w_A = |A|V^{-2/3} \, ,\ee
and let the edge $(A,B)$ in $E(G_\mult)$ be present with probability
\be\label {def:qab} q_{A,B}:=1-e^{-q w_A w_B} \, , \ee
independently of all other edges where $q>0$ is set to be
\begin{equation} q = q_\lambda=  V^{1/3}/\chi(p_s) + \lambda +o_{m\to \infty}(1)\, , \label{def_q} \end{equation}
where $\chi(p)=\E_p|\C(v)|$ is the expected size of the cluster containing $v$ in $H_p$ (by transitivity it does not depend on $v$).

The random graph $G_\mult$ is an instance of Aldous' \emph{multiplicative random graph} which is a well studied object. In \cref{Sec:Mult}, we apply Proposition 4 of Aldous \cite{Aldous97}, and Theorem 3.2 of Bhamidi, Broutin, Sen and Wang \cite{BhBrSeWa2014a}, as black boxes, to obtain that the scaling limits of $\G_\mult$, properly scaled, are $\ZZ_\llambda$ and $\MM_\llambda$, as defined above \cref{main_thm}. A delicate calculation that we perform (in \cref{Sec:Mult}) for that goal is a sharp estimate on the second moment of the size of a subcritical cluster (see \cref{lem:sigma3FirstMom}).

It is not clear that the components of $G_\mult$ should be close to critical percolation clusters of $H$. Note that conditioned on $H_{p_s}$, the probability that there is a $p_c$-open edge between two clusters $A$ and $B$ of $H_{p_s}$ is precisely
\be \label{eq:DONTUSEpab} 1-\left (\frac{1-p_c}{1-p_s} \right)^{\Delta_{A,B}} \, ,\ee
where $\Delta_{A,B}$ for the number of edges having one endpoint in $A$ and the other in $B$. In the complete graph, we always have $\Delta_{A,B}=|A||B|$ so multiplicativity is inherently present in that setup. In the hypercube it is reasonable to believe that $\Delta_{A,B}$ is close to $m|A||B|/V$; the latter is just the expectation of $\Delta_{A,B}$ if $A$ and $B$ were two independent uniformly drawn sets of size $|A|$ and $|B|$. We are unable to prove this uniformly over all $A$ and $B$ (as one has in the complete graph) but only in the $\ell^2$ sense, see \cref{prop:DeltaBis}. 

Thus, it is natural to take the second random graph, again on vertex set $\Cf_{p_s,M_s}$, with independent edge probabilities defined by \eqref{eq:DONTUSEpab}. However, at this point we know very little about the value of $p_c$ and cannot argue that the two random graphs will be close. Instead we argue indirectly and take $p_c'=p_c'(\lambda)\in(0,1)$ to be the unique number satisfying
\begin{equation} 
\log\bigg(\frac{1-p'_c(\lambda)}{1-p_s}\bigg)=-\frac{q_\lambda}{mV^{1/3}}\,, \label{def:p'clambdaINTRO} 
\end{equation}
and set 
\be p_{A,B}:=1-\left (\frac{1-p'_c(\lambda)}{1-p_s} \right)^{\Delta_{A,B}}=1-e^{-q_\lambda\Delta_{A,B}/(mV^{1/3})} \label{def_p_INTRO} \, . \ee
We now let $G_\comp$ be the random graph on vertex set $\Cf_{p_s,M_s}$ so that each edge $(A,B)$ is independently retained with probability $p_{A,B}$ and deleted otherwise. We call $G_\comp$ the {\bf sprinkled component graph}. 

In \cref{sec:CompGraph} we then prove, via a coupling between $G_\mult$ and $G_\comp$ that the components of $G_\comp$ converge to $\ZZ_\llambda$ and $\MM_\llambda$, as defined above \cref{main_thm}. Note that the component sizes in $G_\comp$ have exactly the same distribution as component sizes in $H_{p_c'}$ due to the way we chose $p_{A,B}$ in \eqref{def_p_INTRO} (it does not follow, however, that the the geometry of the two graphs is close; that is the purpose of \cref{sec:FullConvergence}). Thus the component sizes of $H_{p_c'}$ converge to $\ZZ_\llambda$. This suggests that $p_c'$ and $p_c$ are close and in \cref{sec:position} we show that indeed $|p_c-p_c'|$ is of order $o(m^{-1} V^{-1/3})$. This means that $p_c$ and $p_c'$ correspond to the same position in the scaling window, alternatively stated, one can choose the $o_m(1)$ in the definition of $q(\lambda)$ above so that the values of $p_c$ (which depend only on $m$ and $\lambda$) and $p_c'$ (which depend also on the choice of $q$) are in fact equal. This already implies that component sizes of $H_{p_c}$ converge to $\ZZ_\llambda$, i.e., this proves \cref{main_thm}, see \cref{sec:mainthmPart1}. Lastly, in \cref{sec:FullConvergence} we perform a delicate coupling between $G_\comp$ and $H_{p_c}$ yielding \cref{thm:main_metric}. As the argument in \cref{sec:FullConvergence} is rather { lengthy} we omit its outline and refer the reader to that section. 
\newcommand{\SplitTableNotations}{\end{tabular}
\end{table}

\begin{table}[htbp]
\centering

\end{table}

The last piece of notation we use concerns the Frobenius norm of a matrix $M=(m_{i,j})_{i,j\in I}$ which is denoted by $\|M\|:=(\sum_{i,j\in I} m_{i,j}^2)^{1/2}$. We begin by estimating the norm of $\mat_\times$; this estimate involves only the multiplicative graph (relevant notation, $w_A, q_\lambda, \sigma_2$ is defined Section~\ref{Sec:Mult}).

\begin{lemma}\label{sigma2 to sigma1} We have $ \|\mat_\times\|=O_\proba(V^{1/3}/\chi(p_s)).$
\end{lemma}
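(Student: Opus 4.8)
The plan is to reduce the Frobenius bound to a uniform pointwise estimate on the quenched two-point function $\mat_\times(A,B)=\proba^\star(A\stackrel{G_\times}{\lr}B)$, namely that, for all $A\neq B\in\Cf_{p_s,M_s}$,
\[ \mat_\times(A,B)\ \le\ C_\omega\, w_A w_B\,\big(V^{1/3}/\chi(p_s)\big)^2 , \]
where $C_\omega=O_\proba(1)$ does not depend on $A,B$. Granting this and recalling that $\sigma_2:=\sum_A w_A^2=V^{-4/3}\sum_{A\in\Cf_{p_s,M_s}}|A|^2=(1+o_\proba(1))\chi(p_s)/V^{1/3}$ by \cref{lem:sigma2Concentration} (taken with $A=\alpha_m^{-1/6}$, exactly as in the proof of \cref{lem:conditions_for_Hps}), we obtain
\[ \|\mat_\times\|^2=\sum_{A\neq B}\mat_\times(A,B)^2\ \le\ C_\omega^2\,\big(V^{1/3}/\chi(p_s)\big)^4\sum_{A\neq B}w_A^2 w_B^2\ \le\ C_\omega^2\,\big(V^{1/3}/\chi(p_s)\big)^4\,\sigma_2^2\ =\ O_\proba\big((V^{1/3}/\chi(p_s))^2\big), \]
which is the assertion of the lemma.

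To prove the pointwise estimate, write $q:=q_\lambda=(1+o(1))V^{1/3}/\chi(p_s)$ (deterministic), let $\C^\times(A)$ denote the $G_\times$-component containing $A$, and set $\chi_\times(A):=\E^\star[\weight{\C^\times(A)}]=w_A+\sum_{C\neq A}w_C\,\proba^\star(A\lr C)$. The first step is the one-step inequality $\proba^\star(A\lr B)\le q\,w_B\,\chi_\times(A)$ for $A\neq B$. Indeed, examining the last edge of a shortest open path from $A$ to $B$, the event $\{A\lr B\}$ equals $\bigcup_{C\neq B}\big(\{\text{edge }(C,B)\text{ open in }G_\times\}\cap\{A\lr C\text{ in }G_\times\setminus B\}\big)$, where $G_\times\setminus B$ is $G_\times$ with all edges incident to $B$ erased. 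For each fixed $C$ the two events above involve disjoint collections of conditionally independent edges, and erasing edges only diminishes connectivity; hence, by the union bound together with $q_{C,B}=1-e^{-q w_C w_B}\le q w_C w_B$,
\[ \proba^\star(A\lr B)\ \le\ \sum_{C\neq B}q_{C,B}\,\proba^\star(A\lr C\text{ in }G_\times\setminus B)\ \le\ q\,w_B\sum_{C\neq B}w_C\,\proba^\star(A\lr C)\ \le\ q\,w_B\,\chi_\times(A). \]

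The second step bounds $\chi_\times(A)$ uniformly in $A$. Applying the one-step inequality in the symmetric form $\proba^\star(A\lr C)\le q w_A\chi_\times(C)$ and summing over $C\neq A$,
\[ \chi_\times(A)\ \le\ w_A+\sum_{C\neq A}w_C\cdot q w_A\,\chi_\times(C)\ \le\ w_A\,(1+q\,\Theta),\qquad \Theta:=\sum_{C}w_C\,\chi_\times(C). \]
Here $\Theta=\sum_{C,D}w_C w_D\,\proba^\star(C,D\text{ in the same }G_\times\text{-component})=\E^\star\big[\sum_{i}\weight{\C^\times_i}^2\big]=\E^\star\big[\sum_i V^{-4/3}|\C^\times_i|^2\big]$, which by \cref{CVX} converges in probability to $\E[\sum_i|\gamma^\lambda_i|^2]<\infty$; in particular $\Theta=O_\proba(1)$. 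Since $q=(1+o(1))V^{1/3}/\chi(p_s)$ and $\Theta$ does not depend on $A$, this yields $\chi_\times(A)\le C_\omega\, w_A\,V^{1/3}/\chi(p_s)$ for all $A$ simultaneously, with $C_\omega=O_\proba(1)$. Substituting back into the one-step inequality gives the claimed pointwise bound, and the proof is complete.

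The delicate point is the second step. Because $q_\lambda\sigma_2\to1$ — and in fact $q_\lambda\sigma_2\ge1$ eventually whenever $\lambda\ge0$ — the naive self-consistent estimate $\chi_\times(A)\le w_A/(1-q_\lambda\sigma_2)$, equivalently the path expansion $\proba^\star(A\lr B)\le q_\lambda w_A w_B\sum_{k\ge1}(q_\lambda\sigma_2)^{k-1}$, is useless since the geometric series diverges. Routing the bound through the single global quantity $\Theta$, which is controlled a priori by the black-box $L^2$-convergence \cref{CVX} (proved in \cref{Sec:MultX} with no dependence on anything in \cref{sec:CompGraph}, so there is no circularity), is precisely what circumvents this marginal criticality; everything else is a routine rearrangement.
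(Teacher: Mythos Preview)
Your proof is correct and follows essentially the same strategy as the paper: both obtain a pointwise bound of the form $\proba^\star(A\lr B)\le w_A w_B\cdot O_\proba(q_\lambda^2)$ by routing through the global susceptibility quantity $S=\Theta=\E^\star[\sum_i\weight{\C_i^\times}^2]$, controlled via the $L^2$-convergence result (\cref{CVX}/\cref{prop:L2ConvMultGraph}), and then square and sum using $\sigma_2=O_\proba(\chi(p_s)/V^{1/3})$. The only difference is cosmetic: you derive the pointwise bound via a clean ``last edge into $B$'' decomposition, whereas the paper writes an explicit three-case split (direct edge / two-step / longer path) and applies BK; these yield the same inequality up to harmless constants, and the crucial observation you highlight in your final paragraph---that the naive path expansion diverges since $q_\lambda\sigma_2\to1$, so one must invoke the a priori bound on $\Theta$---is exactly the point of both arguments.
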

\begin{proof} Let $A\neq B\in \Cf_{p_s,M_s}$. If $A\stackrel{{G_\times}}{\lr} B$, then either the edge $(A,B)$ is open in $G_\times$ or there exists a $C\in \Cf_{p_s,M_s}$ such that the edges $(A,C)$ and $(C,B)$ are open or there exists $A',B'\in \Cf_{p_s,M_s}$, such that the edges $(A,A')$, $(B',B)$ are open and $A'\stackrel{{G_\times}}{\lr} B'$ using a path avoiding these two edges. Using the BK inequality we get 
\begin{align*}
\proba^\star(A\lr^{G_\times} B )
& \leq q_{A,B}+\sum_{C} q_{A,C} q_{C,B}+\sum_{A',B'} q_{A,A'} q_{B',B} \proba^\star(A'\lr^{G_\times} B') \\   
&\leq q_\lambda w_A w_B+\sum_{C} w_A w_B w_C^2 q_\lambda^2+\sum_{A',B'} q_{\lambda} w_A w_{A'} q_{\lambda} w_{B'} w_{B} \proba^\star(A'\lr^{G_\times} B')\,,
\end{align*}
where the second inequality follows from the fact that $q_{A,B}\leq q_\lambda w_A w_B$ for any $A\neq B\in  \Cf_{p_s,M_s}$. The right-hand side above factorizes, yielding
\begin{equation} 
\proba^\star(A\lr^{G_\times} B )\leq w_A w_B(q_\lambda+q_\lambda^2 \sigma_2+q_\lambda^2 S). \label{eq:ProbMult}
\end{equation}
where 
\[ S: =\sum_{A,B\in \Cf_{p_s,M_s}} w_Aw_B \proba^\star(A\lr^{G_\times} B) \, . \]
We square this, sum over $A,B$ and take a square root to obtain that 
$$ \|\mat_\times\| \leq q_\lambda\sigma_2 +q_\lambda^2 \sigma_2^2 +q_\lambda^2 \sigma_2 S \, .$$
Now \cref{prop:L2ConvMultGraph} which gives that $\E S = O(1)$ hence $S=O_\proba(1)$. Furthermore, $q_\lambda=O(V^{1/3}/\chi(p_s))$ by definition in \eqref{def:qlambda}, and \cref{lem:sigma2Concentration} gives that $\sigma_2=O_\proba(\chi(p_s)/V^{1/3})$, concluding our proof.
\end{proof}

Next we bound the norm of $\mat_{\comp\neq \times}$ by the norms of $T_\times, T_\comp$ and $\disc$.

\begin{lemma} \label{MassEstimates} 
We have 
$\| \mat_{\comp\neq \times} \| \leq \|\disc\|(1+\| \mat_\comp\|)(1+\| \mat_\times\|)$.
\end{lemma}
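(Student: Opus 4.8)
The plan is to prove the stronger \emph{entrywise} matrix inequality
\[ \mat_{\comp\neq\times} \le (I+\mat_\comp)\,\disc\,(I+\mat_\times) \]
between the nonnegative matrices indexed by $\Cf_{p_s,M_s}$ (here $I$ is the identity matrix), and then to read off the Frobenius-norm bound by submultiplicativity. Throughout I work conditionally on $H_{p_s}$, under the monotone coupling $(U_{A,B})$ introduced above, and I call an edge $(C,C')$ \emph{discordant} if it is present in exactly one of $G_\comp$ and $G_\times$; conditionally on $H_{p_s}$ this event depends only on the coordinate $U_{C,C'}$ and has probability exactly $|p_{C,C'}-q_{C,C'}|=\disc(C,C')$.

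The core of the argument is a path-surgery decomposition of $\neq_{A,B}$. On $\neq_{A,B}$ there is a self-avoiding path $\pi$ from $A$ to $B$ lying in exactly one of the two graphs, and $\pi$ must use at least one discordant edge. If $\pi\subseteq G_\comp$, I single out the \emph{last} discordant edge $(C,C')$ of $\pi$ (with $C$ on the $A$-side): then the initial segment $\pi_1$ of $\pi$ from $A$ to $C$ lies in $G_\comp$, while the terminal segment $\pi_2$ from $C'$ to $B$ uses only concordant edges and hence lies in $G_\times$. If instead $\pi\subseteq G_\times$, I take the \emph{first} discordant edge $(C,C')$, so that symmetrically $\pi_1\subseteq G_\comp$ (all concordant) and $\pi_2\subseteq G_\times$. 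Since $\pi$ is self-avoiding, in both cases the edge sets of $\pi_1$, $\{(C,C')\}$ and $\pi_2$ are pairwise disjoint; hence $\neq_{A,B}$ is contained in the union over $C,C'\in\Cf_{p_s,M_s}$ of the disjoint occurrence
\[ \{A\lr C\ \text{in}\ G_\comp\}\circ\{(C,C')\ \text{discordant}\}\circ\{C'\lr B\ \text{in}\ G_\times\}, \]
with the convention that the first (resp.\ third) event is the sure event, witnessed by the empty edge set, when $C=A$ (resp.\ $C'=B$). A union bound over $C,C'$ together with Reimer's inequality, applied in the product space of the $(U_{A,B})$, then gives $\mat_{\comp\neq\times}(A,B)\le\sum_{C,C'}(I+\mat_\comp)(A,C)\,\disc(C,C')\,(I+\mat_\times)(C',B)$, which is exactly the claimed entrywise bound.

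To finish, I use that $0\le M\le M'$ entrywise implies $\|M\|\le\|M'\|$ for the Frobenius norm, together with the standard bounds $\|XYZ\|\le\|X\|_{\mathrm{op}}\,\|Y\|\,\|Z\|_{\mathrm{op}}$ and $\|I+N\|_{\mathrm{op}}\le 1+\|N\|_{\mathrm{op}}\le 1+\|N\|$ valid for every matrix $N$; applied with $X=I+\mat_\comp$, $Y=\disc$, $Z=I+\mat_\times$, these yield $\|\mat_{\comp\neq\times}\|\le(1+\|\mat_\comp\|)\,\|\disc\|\,(1+\|\mat_\times\|)$.

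The only genuinely delicate point — the main obstacle — is arranging the decomposition with the \emph{correct} pairing, i.e.\ attaching the $G_\comp$-segment to $A$ and the $G_\times$-segment to $B$: this is what makes the two cases $\pi\subseteq G_\comp$ and $\pi\subseteq G_\times$ collapse into a single disjoint-occurrence event and produces the product $(1+\|\mat_\comp\|)(1+\|\mat_\times\|)$ without any spurious constant; using the last discordant edge in the first case and the first discordant edge in the second is precisely the device achieving this. A secondary point requiring care is that the ``discordant'' event is not monotone, so one really needs Reimer's inequality rather than the monotone BK inequality, and one must verify (as above) that the three events are witnessed by the pairwise edge-disjoint pieces $\pi_1$, $\{(C,C')\}$, $\pi_2$ of a single self-avoiding path.
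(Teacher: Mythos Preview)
Your proof is correct and follows essentially the same approach as the paper: both single out the last discordant edge when the witnessing path lies in $G_\comp$ and the first discordant edge when it lies in $G_\times$, obtain the entrywise inequality $\mat_{\comp\neq\times}\le (I+\mat_\comp)\,\disc\,(I+\mat_\times)$, and then pass to the Frobenius norm. The only cosmetic differences are that the paper factors out the discordant-edge event by independence (it depends on the single coordinate $U_{C,D}$, which neither segment uses) and then applies the monotone BK inequality to the two connection events rather than invoking Reimer, and that in the last step the paper expands $(I+\mat_\comp)\disc(I+\mat_\times)$ into four terms and bounds each by Frobenius submultiplicativity instead of going through the operator norm as you do.
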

\begin{proof}  We claim that if the event $\neq_{A,B}$ occurs, then there exists an edge $(C,D)$ such that either $(C,D)$ is closed in $G_\times$ but open in $G_\comp$, or, open in $G_\times$ but closed in $G_\comp$ and 
$$ \{A\stackrel{G_\comp}{\lr}C\} \circ \{D\stackrel{G_\times}{\lr}B\} \, ,$$
occurs disjointly. Indeed, if $A$ and $B$ are connected in $G_\comp$ but not in $G_\times$, then we consider the path connecting them in $G_\comp$ and take $(C,D)$ to be the last edge on this path that is \emph{not} in $G_\times$. In the other case, if $A$ and $B$ are connected in $G_\times$ but not in $G_\comp$, then we consider the path connecting them in $G_\times$ and take the first edge in this path that is \emph{not} in $G_\comp$. 
Hence, the union bound and BK inequality give
\[ | \proba^\star(\neq_{A,B})| 
\leq \sum_{C,D  \in \Cf_{p_s,M_s}} \proba^\star(A\lr^{G_\comp} C)\cdot |q_{C,D}-p_{C,D}| \cdot \proba^\star(D\lr^{G_\times} B)\,,
\]
for any $A,B \in \Cf_{p_s,M_s}$. Note that the right-hand side is just the $(A,B)$ entry of the matrix product $(\Id+\mat_\comp)\disc (\Id+\mat_\times)$. Thus, the triangle inequality and the sub-multiplicativity of the Frobenius norm imply that 
\begin{align*} 
\| \mat_{\comp\neq \times}\| 
& \leq \|(\Id+\mat_\comp) \disc (\Id+\mat_\times) \|
\\ & \leq \|\disc\|+\|\mat_\comp\disc \| + \|\disc \mat_\times \|+\| \mat_\comp\disc\mat_\times\|
\\ & \leq \|\disc\|(1+\|\mat_\comp\|)(1+\|\mat_\times\|).\qedhere
\end{align*}
\end{proof}

This allows us to bound $\| \mat_{\comp}\|$. 
\begin{lemma}\label{lem:TcompNorm} We have $\|\mat_\comp\|=O_\proba(V^{1/3}/\chi(p_s))$.
\end{lemma}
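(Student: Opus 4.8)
The plan is to bootstrap from the two preceding lemmas. We have already established in Lemma~\ref{sigma2 to sigma1} that $\|\mat_\times\| = O_\proba(V^{1/3}/\chi(p_s))$, and in Lemma~\ref{MassEstimates} that $\|\mat_{\comp\neq\times}\| \le \|\disc\|(1+\|\mat_\comp\|)(1+\|\mat_\times\|)$. The key additional input is the $\ell^2$ estimate on the discrepancy: by Proposition~\ref{prop:DeltaBis}, $\E_{p_s}\|\disc\|^2 = \E_{p_s}\big[\sum_{A\neq B}(p_{A,B}-q_{A,B})^2\big] = O(\chi(p_s)^3/V)$, so by Markov's inequality $\|\disc\| = O_\proba(\chi(p_s)^{3/2}/V^{1/2})$. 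Since $\chi(p_s) = \Theta(V^{1/3}\alpha_m^{1/3})$ with $\alpha_m\to 0$, this gives $\|\disc\| = O_\proba(V^{1/2}\alpha_m^{1/2}/V^{1/2}) \cdot \chi(p_s)^{1/2}$... more precisely $\|\disc\| = O_\proba(\alpha_m^{1/2} \chi(p_s)^{1/2} / \text{(something)})$; the point I want is simply that $\|\disc\| = o_\proba(V^{1/3}/\chi(p_s))$, which holds because $\chi(p_s)^{3/2}/V^{1/2} = \Theta(V^{1/2}\alpha_m^{1/2}/V^{1/2})\cdot\alpha_m^{-1/6}$... let me just note that $\chi(p_s)^{3/2}V^{-1/2} = \Theta(\alpha_m^{1/2})$ while $V^{1/3}/\chi(p_s) = \Theta(\alpha_m^{-1/3})$, so indeed $\|\disc\| = O_\proba(\alpha_m^{1/2}) = o_\proba(1) = o_\proba(V^{1/3}/\chi(p_s))$, and in fact $\|\disc\| = o_\proba(1)$ outright.

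The heart of the argument is then a self-bounding inequality for $\|\mat_\comp\|$. By the triangle inequality applied entrywise (comparing connection probabilities in $G_\comp$ with those in $G_\times$ via the event $\neq_{A,B}$), we have $\mat_\comp(A,B) \le \mat_\times(A,B) + \mat_{\comp\neq\times}(A,B)$ for every pair $A\neq B$, since if $A\stackrel{G_\comp}{\lr} B$ then either $A\stackrel{G_\times}{\lr} B$ or the connecting path witnesses the event $\neq_{A,B}$. Taking Frobenius norms and using Lemma~\ref{MassEstimates},
\[
\|\mat_\comp\| \le \|\mat_\times\| + \|\mat_{\comp\neq\times}\| \le \|\mat_\times\| + \|\disc\|\,(1+\|\mat_\comp\|)(1+\|\mat_\times\|).
\]
Write $X = \|\mat_\comp\|$, $a = \|\mat_\times\|$, $\eps = \|\disc\|$. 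Then $X \le a + \eps(1+X)(1+a)$, i.e. $X(1 - \eps(1+a)) \le a + \eps(1+a)$. On the event that $\eps(1+a) \le 1/2$ — which has probability $1-o(1)$ since $a = O_\proba(V^{1/3}/\chi(p_s)) = O_\proba(\alpha_m^{-1/3})$ and $\eps = O_\proba(\alpha_m^{1/2})$, so $\eps(1+a) = O_\proba(\alpha_m^{1/6}) = o_\proba(1)$ — we may rearrange to obtain $X \le 2(a + \eps(1+a)) \le 2a + 2\eps + 2\eps a$, which is $O_\proba(V^{1/3}/\chi(p_s))$ since each term is. This gives $\|\mat_\comp\| = O_\proba(V^{1/3}/\chi(p_s))$ as claimed.

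The only subtlety — and the main thing to get right — is the circularity: $\|\mat_{\comp\neq\times}\|$ is bounded in Lemma~\ref{MassEstimates} in terms of $\|\mat_\comp\|$ itself, so one cannot simply plug in. The resolution is exactly the rearrangement above: the coefficient $\eps(1+a)$ multiplying $\|\mat_\comp\|$ on the right-hand side is $o_\proba(1)$, so it can be absorbed into the left-hand side. I would state this carefully as an event-by-event deterministic manipulation (on the high-probability event $\{\eps(1+a)\le 1/2\}$) and then conclude the tightness statement, rather than manipulating $O_\proba$ symbols directly through a division, which could hide the degenerate case. No genuinely hard estimate is needed beyond Proposition~\ref{prop:DeltaBis} and Lemma~\ref{sigma2 to sigma1}, both already available.
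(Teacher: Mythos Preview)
Your proposal is correct and follows essentially the same approach as the paper: bound $\|\mat_\comp\|$ via $\|\mat_\times\|+\|\mat_{\comp\neq\times}\|$, apply Lemma~\ref{MassEstimates}, use Proposition~\ref{prop:DeltaBis} and Lemma~\ref{sigma2 to sigma1} to see that $\|\disc\|(1+\|\mat_\times\|)=o_\proba(1)$, and then absorb the self-referential $\|\mat_\comp\|$ term on the right into the left-hand side. The paper uses the two-sided reverse triangle inequality $\big|\|\mat_\comp\|-\|\mat_\times\|\big|\le\|\mat_{\comp\neq\times}\|$ rather than your one-sided version, but the rearrangement step and the inputs are identical.
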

\begin{proof} By the triangle inequality we have
\begin{align*} 
\big|\|\mat_{\comp}\|-\| \mat_\times\|\big|^2 
\leq \|\mat_{\comp}-\mat_\times \|^2 
& \leq  \sum_{A,B\in \Cf_{p_s,M_s}}  |\proba^\star(A\lr^{G_\times}B)-\proba^\star(A\lr^{G_\comp}B) |^2  \notag
\\ & \leq \sum_{A,B\in \Cf_{p_s,M_s}} \proba^\star(\neq_{A,B})^2 
\leq \| \mat_{\comp\neq \times} \|^2. 
\label{TooSmallToBeAFish}
\end{align*}

Hence by Lemma~\ref{MassEstimates}, 
$$ \big| \|\mat_\comp\|-\|\mat_\times  \|\big| \leq \|\disc\|(1+\| \mat_\comp\|)(1+\| \mat_\times\|) \, .$$
Lemma~\ref{prop:DeltaBis} implies that $\|\disc\|^2=O_\proba(\chi(p_s)^3/V)$ so $\|\disc\|=o_\proba(\chi(p_s)/V^{1/3})$ since $\chi(p_s)=o(V^{1/3})$. Together with \cref{sigma2 to sigma1}, this implies that $\|\disc\|(1+\| \mat_\times\|) = o_\proba(1)$, hence
$$ \big| \|\mat_\comp\|-\|\mat_\times  \|\big| = o_\proba(1+\| \mat_\comp\|) \, ,$$
which by the triangle inequality gives the desired result. \end{proof}

\begin{proof}[Proof of Proposition \ref{TechnicalCompare}]
We first note that by Cauchy--Schwarz's inequality,
\begin{align*}
\E^\star\Bigg [ \sum_{A,B\in \Cf_{p_s,M_s} } |A| |B|\1_{\neq_{A,B}} \Bigg ]^2 
&\leq \sum_{A,B\in \Cf_{p_s,M_s}} |A|^2 |B|^2 \times\sum_{A,B\in \Cf_{p_s,M_s}} \proba^\star(\neq_{A,B} )^2\,.
\end{align*}
Lemma~\ref{lem:sigma2Concentration} shows that the first factor is $O_\proba(V^2\chi(p_s)^2)$ and the second is just $\|\mat_{\comp\neq \times} \|^2$. We bound the latter using \cref{MassEstimates} together with \cref{sigma2 to sigma1}, \cref{lem:TcompNorm} and Lemma~\ref{prop:DeltaBis}, yielding a bound $\|\mat_{\comp\neq \times} \|=o_{\proba}(V^{1/3}/\chi(p_s))$. Putting all these together gives
\begin{align*}
\E^\star\Bigg [ \sum_{A,B\in \Cf_{p_s,M_s} } |A| |B|\1_{\neq_{A,B}} \Bigg ]^2 = o_\proba(V^{7/3} \chi(p_s)) = o_\proba(V^{8/3}) \, ,
\end{align*}
since $\chi(p_s) = o(V^{1/3})$, concluding our proof. 
\end{proof} 
\subsection{Convergence of the sprinkled component graph} 
\label{Sec:CompTopo}

 \cref{TechnicalCompare} is precisely what is necessary to transfer the known asymptotic properties from 
 $(\C_i^\times)_{i\ge 1}$ to $(\C_i^\comp)_{i\ge 1}$. We start with the convergence of the sizes. 
\begin{proof}[Proof of \cref{MultMasse2}]
By \cref{pro:MCGMassConverges} we have
\[V^{-2/3} (|\C^{\times}_1|,|\C^{\times}_2|,\ldots) \stackrel{\mathrm{(d)}}{\longrightarrow} \ZZ_\llambda \, ,\]
where $\ZZ_\lambda=(|\gamma_i|)_{i\ge 1}$ and $(\gamma_i)_{i\ge 1}$ are the excursions of $(W^\lambda_t)_{t\ge 0}$ above its running infimum (see \cref{sec:intro}). Next \cref{TechnicalCompare} together with Markov's inequality implies that 
$$ V^{-4/3}\sum_{A,B} |A||B| \1_{\ne_{A,B}} \stackrel{\mathrm{(d)}}{\longrightarrow} 0 \, .$$

By Skorohod's representation theorem, we may assume without loss of generality that the convergences above both occur almost surely. Now for any fixed integer $k>0$ and $\e >0$ we denote by $\Omega_\e^k$ the event 
\be\label{eq:omegaEK} \Omega_\e^k = \Big \{ |\gamma_i| \geq |\gamma_{i+1}|+\e \quad\forall \,\, i=1,\ldots, k-1, \mathrm{\ and\ } |\gamma_k|\geq \e \Big \} \, .\ee
Since $(|\gamma_1|,\dots, |\gamma_k|)$ is absolutely continuous on $\R_+^k$ we have that $\proba(\Omega_\e^k) \to 1$ as $\e \to 0$ and $k$ fixed. Thus, on $\Omega_\epsilon^k$ for all $i\in [k-1]$ and $m$ large enough we have
\be \label{eq:MultClustersAreSeparated} |\C_i^\times| \geq |C_{i+1}^\times| + \e V^{2/3}/2 \quad \mathrm{and} \quad  |\C_k^\times|\ge \e V^{2/3}/2 \, .\ee

Assume now that $\Omega_\e^k$ holds and that $\e>0$ is arbitrarily small but fixed. We claim that for any $i=1,\ldots k$ there exists a unique $j \geq 1$ (later we will prove $j=i$) such that 
\be\label{eq:matchCiCj} |\C_i^\times \cap \C_j^\comp| \ge \big (1-{\e\over 8}\big )|\C_i^\times| \qquad \mathrm{and} \qquad |\C_j^\comp\setminus \C_i^\times|\le {\e \over 8} |\C_i^\times| \, .\ee
To show existence of such $j$ we set $x_{i,j}=|\C_i^{\times}\cap \C_j^\comp|/|\C^{\times}_i|$ and observe that
\begin{align*}
	\sum_{A,B} |A| |B| \1_{\neq_{A,B}} 
	& \ge \sum_{A,B\in \C_i^{\times}} |A| |B| \1_{A\stackrel{G_\comp}{\hspace{.15cm} \not \hspace{-.15cm} \lr} B} 
	= \sum_{j\ge 1} \sum_{A\in C^{\times}_i \cap \C^\comp_j} |A| \cdot \sum_{B\in \C^{\times}_i \setminus \C^\comp_j} |B|\\
	& = \sum_{j\ge 1} \sum_{A\in C^{\times}_i \cap \C^\comp_j} |A| \cdot |\C^{\times}_i|(1-x_{i,j}) \ge |\C^{\times}_i|^2 \cdot \Big(1-\max_j x_{i,j}\Big)\, .
\end{align*}
Since $|\C^{\times}_i|\geq \eps V^{2/3}/2$ and the left-hand side is $o(V^{4/3})$ we get that $\max_j x_{i,j}\to 1$ for any $i=1,\ldots k$ and so there exists $j$ such that the left-hand side of \eqref{eq:matchCiCj} holds. For this $j$ the right-hand side of \eqref{eq:matchCiCj} must hold as well, since otherwise $|\C_j^\comp \setminus \C_i^\times|\geq \e |\C_i^\times|/8$ implying that
$$ 	\sum_{A,B} |A| |B| \1_{\neq_{A,B}} \geq \sum_{A \in \C_i^\times, B \in \C_j^\comp \setminus \C_i^\times} |A||B| \geq \eps |\C_i^\times|^2 \geq \eps^3 V^{4/3}/64 \, ,$$
contradicting the fact that the left-hand side is $o(V^{4/3})$. This $j$ is unique, since if there were two distinct $j$'s satisfying \eqref{eq:matchCiCj}, then the corresponding components in $G_\comp$ would intersect. We denote this unique $j$ by $\pi(i)$. Note that similarly $\pi$ is injective: if there were two $i$'s corresponding to the same $j$ in \eqref{eq:matchCiCj}, then the corresponding components in $G_\times$ would intersect. 

We also deduce that there is some $j\in \{k,k+1,\ldots\}$ for which $|\C_j^\comp|\geq (1-\e) |\C_k^\times| \geq {\e \over 4}V^{2/3}$ (by \eqref{eq:MultClustersAreSeparated}) when $\e \leq 1/2$, hence $|\C_k^\comp|\geq {\e \over 4}V^{2/3}$. This allows us to repeat the same argument as above with the components of $G_\comp$ rather than $G_\times$, and obtain that for any $j\in \{1,\ldots, k\}$ there exists $i \geq 1$ such that
\be\label{eq:matchCiCj2} |\C_j^\comp \cap \C_i^\times| \ge \big (1-{\e \over 8} \big)|\C_j^\comp| \qquad \mathrm{and} \qquad |\C_i^\times\setminus \C_j^\comp|\le {\e\over 8} |\C_j^\comp| \, ,\ee
and similarly this $i$ is unique. We denote this unique $i$ by $\sigma(j)$ and note that again $\sigma$ is injective.

Now, if $\pi(i)=j$ or $\sigma(j)=i$, then $|\C_i^{\times}|/|C_j^\comp|\in [1-\e/4,1+\e/4]$. This and the fact that the sizes of $\C_1^\times, \ldots, \C_k^\times$ are separated by at least $\e V^{2/3}/2$ by \eqref{eq:MultClustersAreSeparated} imply that $\pi(1)=1$; indeed, otherwise $\pi(1)>1$ and we get that for all $j=1,\ldots, \pi(1)$ we have $|\C_j^\comp| \geq |\C_1^\times| - \e V^{2/3}/4$ and so both $j=1,2$ must be matched to $i=1$ by $\sigma$, contradicting the fact that $\sigma$ is injective. We deduce that $\pi(1)=1$ and $\sigma(1)=1$. By induction it folllows that $\pi(i)=i$ for all $i\in [k]$.

Recalling that $\Omega^k_\epsilon$ occurs with arbitrary high probability by choice of $\epsilon$, this also shows that, for every natural number $k$, 
\be\label{eq:convInProb} V^{-4/3} \sum_{i=1}^k \Big||\C^\times_i|-|\C^\comp_i|\Big| \to 0 \quad \text{in probability.}\ee

It remains only to prove the tightness in $\ell^2$ of $\sum_{i \geq 1}|\C_i^\comp|^2$. The triangle inequality implies that, for any $\epsilon>0$ and for any $k\ge 1$, if $\sum_{i>k}|\C_i^\comp|^2>\epsilon V^{4/3}$, then one of the next events must occur: either
\begin{align*}
	\sum_{i>k} |\C_i^\times|^2> \frac \epsilon 3 V^{4/3}, \quad\text{or}\quad
	\bigg|\sum_{i=1}^k (|\C_i^\times|^2 - |\C_i^\comp|^2)\bigg|>\frac \epsilon 3 V^{4/3}, \quad\text{or}\quad 
	\bigg|\sum_{i\ge 1} (|\C_i^\times|^2 - |\C_i^\comp|^2)\bigg|>\frac \epsilon 3 V^{4/3}\,.
\end{align*}
However, the convergence of $(V^{-2/3}|\C_i^\times|)_{i\ge 1}$ in $\ell^2$ implies that for any $\eta>0$, we can choose $k$ large enough that 
\[\limsup_{m}\proba\bigg(\sum_{i>k} |\C_i^\times|^2 > \frac\epsilon3 V^{4/3}\bigg) \le \eta/3\,.\]
This value of $k$ being fixed, the fact that the probability of the second event is no more than $\eta/3$ for all $m$ large enough is a  consequence of \eqref{eq:convInProb}. Finally, for the third event we have 
\begin{align*}
	\bigg|\sum_{i\ge 1} (|\C^{\comp}_i|^2 - |\C^\times_i|^2) \bigg|
	= \bigg|\sum_{A\ne B} |A| |B| (\1_{A \stackrel{G_\comp}{\lr} B}-\1_{A \stackrel{G_\times}{\lr} B})\bigg|
	\le \sum_{A\ne B} |A| |B| \1_{\ne_{A,B}} = o(V^{4/3}) \, ,
\end{align*}
This concludes the proof of \cref{MultMasse2}.
\end{proof}

We may now use \cref{pro:MCGDistConverges} and \cref{TechnicalCompare} to deal with the asymptotics for the distances in the component graph. 
\begin{proof}[Proof of \cref{pro:conv_GP_comp}]
We work on a probability space on which we have the almost sure convergence of $V^{-2/3}(|\C_i^\comp|)_{i\ge 1}$ and $V^{-2/3} (|\C_i^\times|)_{i\ge 1}$. Let $d_i^\comp$ and $d_i^\times$ denote respectively the metrics in $\C_i^\comp$ and $\C_i^\times$. 

By \cref{pro:MCGDistConverges}, $(M_i^\times)_{i\ge 1}\to \MM_\llambda$ for the  product-GP topology. Therefore, in order to prove the joint convergence of the collection of metric spaces $(M_i^{\comp})_{i\ge 1}$ with respect to the product GP-distance, it suffices to prove that for every $i\ge 1$ and every $\ell\ge 1$, there exists a coupling of random points $(\xi^\times_j)_{j=1}^\ell$ and $(\xi^{\comp}_j)_{j=1}^\ell$ which are respectively i.i.d.\ with distribution proportional to $\sum_{A\in \C_i^\times} |A|\delta_A$ and $\sum_{A\in \C_i^\comp} |A| \delta_A$, such that, in probability,  
\begin{equation}\label{eq:discr_distance_matrices}
\max_{1\le j,k\le \ell}\big\{|d_i^\times(\xi^\times_j, \xi^\times_k)-d_i^{\comp}(\xi^\comp_j, \xi^\comp_k)|\big\} \to 0\,.
\end{equation}
Indeed, if \eqref{eq:discr_distance_matrices} holds, then for every finite subset $S\subseteq \N$, the union bound implies that the convergence also holds if we further take the maximum over the indices $i\in S$. We are then left with the proof of \eqref{eq:discr_distance_matrices} for a single fixed $i\in \N$.

We start by coupling the random points. Recall the event $\Omega_\epsilon^k$ from \eqref{eq:omegaEK} in the proof of \cref{MultMasse2}. Fix a natural number $k\ge 1$. Let $\eta>0$ be arbitrary. Choose $\epsilon>0$ such that $\proba(\Omega_\epsilon^k)\ge 1-\eta/5$.  Then, let $m'$ be large enough that first, on $\Omega_\epsilon^k$, we have $|\C_i^\times|\ge \epsilon/2$ for all $m\ge m'$, and second 
\[ \proba\bigg(\min\bigg\{\frac{|\C_i^\times \cap \C_i^\comp|}{|\C_i^\times|}; \frac{|\C_i^\times \cap \C_i^\comp|}{|\C_i^\comp|}\bigg\} \ge 1-\frac \eta{5\ell}~\bigg|~\Omega_\epsilon^k\bigg) < \eta/5\,.\]
Then, for all $m\ge m'$, we may ensure that $\xi_j^\times=\xi^\comp_j$ for all $j\in [\ell]$ with probability at least $1-3\eta/5$. Observe also, that on the same event we also have $|\C_i^\times \cap \C_i^\comp| \ge \epsilon V^{2/3}/3$. 

When we do have perfect coupling we write $\xi_j$ for the common value of $\xi_j^\comp=\xi_j^\times$; then the distances $d_i^\times(\xi_j,\xi_k)$ and $d_i^\comp(\xi_j,\xi_k)$ may only differ on the condition that there is a self-avoiding path between $\xi_j$ and $\xi_k$ that exists in one of $G_\times$ or $G_\comp$, but not the other; this is precisely the event $\ne_{\xi_j,\xi_k}$. Furthermore, for a given pair $j,k\in [\ell]$, the conditional probability that the event $\ne_{\xi_j,\xi_k}$ occurs is 
\[\sum_{A,B\in \C_i^\times \cap \C_i^\comp} \frac{|A||B| \1_{\ne_{A,B}}}{|\C_i^\times \cap \C_i^\comp|^2}\,.\]
Choose now $\delta>0$ small enough that $9\ell^2 \delta/\epsilon^2 < \eta/5$, and finally, using \cref{TechnicalCompare} and Markov's inequality, $m''\ge m'$ large enough that 
\[\proba\bigg(\sum_{A,B} |A||B| \1_{\ne_{A,B}}> \delta V^{4/3}\bigg) < \eta/5\,.\]
Then, for $m\ge m''$, the union bound implies that 
\[\proba\Big(\max_{1\le j,k\le \ell}\Big\{\big|d_i^\times(\xi_j^\times, \xi_k^\times)-d_i^\comp(\xi^\comp_j, \xi_k^\comp)\big|\Big\}>0\Big) \le \eta\,.\]
Since $\eta$ was arbitrary, this proves the claimed convergence in \eqref{eq:discr_distance_matrices} and completes the proof.
\end{proof} 
\subsection{Position in the critical window}\label{sec:position}

The main goal of this section is to prove \cref{Pro:PositionPc}. Our proof is based on a comparison between the susceptibility at $p'_c$ and in the sprinkled component graph which we may rewrite respectively as

\[ V\chi(p'_c(\lambda))=\sum_{u,v\in\{0,1\}^m} \proba_{p'_c(\lambda)} \left (u \lr v \right ) ,\]
and, writing $\V_\star:=\bigcup_{A \in \comp_{p_s,M_s}} A$,  as 
\[ \E\Bigg[\sum_{i\geq 1} |\C^{\comp}_i|^2 \Bigg]=\sum_{u,v\in\{0,1\}^m} \proba_{p'_c(\lambda)} \left (u \stackrel{\V_\star}{\lr} v \right ).\]

It is thus natural to define, for any $p\in [0,1]$, the random variable $N(p)$ counting the number of (ordered) pairs of vertices connected in $H_p$, and such that every path between them goes through a connected component in $H_{p_s}$ of size \emph{less than} $M_s$. In particular, $N(p)$ includes the count of pairs of vertices of the connected components of size less than $M_s$ in $H_{p_s}$. Observe that $N(p)$ is a random variable measurable with respect to the simultaneous coupling described in \cref{sec:counting_bad_pairs}. 

We proceed by showing that $\E N(p)=o(V^{4/3})$ whenever $p$ is within (or below) the scaling window, then conclude by showing that $p'_c(\lambda)$ is indeed inside the scaling window. 

\begin{lemma} \label{lem:BadPair} For any $\Lambda\in \R$ we have $\E[N(p_c(\Lambda))]=o(V^{4/3})$ as $m\to \infty$.
\end{lemma}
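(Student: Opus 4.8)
The plan is to bound $\E[N(p)]$ for $p=p_c(\Lambda)$ by splitting according to whether the connecting path uses only ``short'' clusters of $H_{p_s}$ (size $<M_s$) or whether it is long in the intrinsic metric. Concretely, fix two vertices $u,v$ and suppose $u\lr v$ in $H_p$ with every path between them passing through clusters of $H_{p_s}$ of size less than $M_s$. Since $p=p_c(\Lambda)$ differs from $p_s$ by a sprinkling, I would think of $H_p$ as $H_{p_s}$ with extra edges opened independently; the event contributing to $N(p)$ means that in any open path from $u$ to $v$, every maximal $p_s$-open sub-path lies in a cluster of size $<M_s$. The first step is to reduce to a ``skeleton'' of sprinkled edges: between consecutive sprinkled edges the path stays within a single $H_{p_s}$-cluster, which has size $<M_s$ and (by \eqref{eq:subcritMaxDiam}) diameter at most $C\chi(p_s)\log(V/\chi(p_s)^3)=:D_s$. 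Hence the number $k$ of sprinkled edges used satisfies $k\cdot D_s \geq$ the intrinsic distance, and also a short path with few sprinkled edges can only reach $O((k+1)M_s)$ vertices.

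The key estimate is then a union bound over the sequence of sprinkled edges. I would write, using the BK/BKR inequality and the fact that a sprinkled edge is present with probability $p-p_s = O(V^{-1/3}m^{-1}\chi(p_s)^{-1}\cdot\text{something})$ — more precisely $p_c(\Lambda)-p_s \asymp m^{-1}V^{-1/3}$ by \cref{cor:Window} applied with the endpoints $\Lambda$ and $1-V^{-1/3}\alpha_m^{-1/3}\to$ (strictly below the window), together with \cref{lem:estimate_pc'} — that
\[ \E[N(p)] \le \sum_{k\ge 0}\ \sum_{\substack{u=x_0,x_1,\dots,x_k=v}}\ (p-p_s)^k \prod_{i=0}^{k-1}\proba_{p_s}\big(x_i \lr y_i,\ |\C(x_i)|<M_s\big)\cdot(\text{last factor}), \]
where $y_i$ is the endpoint of the $i$-th sprinkled edge. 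Summing $\sum_{y}\proba_{p_s}(x\lr y, |\C(x)|<M_s)$ over $y$ gives $\E_{p_s}[|\C(x)|\1_{|\C|<M_s}]=O(V^{1/3}\alpha_m^2)$ by \eqref{eq:ClusterTail} (Abel summation: $\sum_{j\le M_s} j^{-1/2}=O(\sqrt{M_s})=O(V^{1/3}\alpha_m^2)$). Each sprinkled edge contributes a factor $\sum_{x_{i}\sim y_{i-1}}(p-p_s)=m(p-p_s)=O(V^{-1/3})$ (summing over the $\sim m$ neighbours $x_i$ of the previous endpoint; here I use that sprinkled edges are hypercube edges). Multiplying, the contribution of terms with exactly $k$ sprinkled edges is at most $V\cdot \big(O(V^{1/3}\alpha_m^2)\cdot O(V^{-1/3})\big)^{k}\cdot O(V^{1/3}\alpha_m^2)=O(V^{4/3}\alpha_m^2)\cdot O(\alpha_m^2)^k$, and summing the geometric series in $k$ gives $\E[N(p)]=O(V^{4/3}\alpha_m^2)=o(V^{4/3})$, as desired. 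The $k=0$ term alone is $V\cdot\E_{p_s}[|\C(v)|\1_{|\C|<M_s}]=O(V^{4/3}\alpha_m^2)=o(V^{4/3})$, which handles the pairs already connected below $p_s$ through a small cluster.

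The main obstacle I anticipate is making the skeleton/BKR decomposition fully rigorous: one must verify that the event defining $N(p)$ really does decompose as a disjoint occurrence of ``$x_i\lr y_i$ inside a small $p_s$-cluster'' along a chain of sprinkled edges, without double-counting and while correctly encoding the constraint ``\emph{every} path goes through a small cluster'' (as opposed to ``\emph{some} path''). The clean way is to condition on $H_{p_s}$ first: given $H_{p_s}$, the clusters are fixed, and $N(p)$ counts pairs $u,v$ in the same $p$-cluster whose $p_s$-clusters, and all $p_s$-clusters met by any $u$–$v$ path, have size $<M_s$. Then run an exploration along sprinkled edges; the constraint becomes that each $p_s$-cluster encountered has size $<M_s$, which is a monotone-looking event in the sprinkled randomness to which the BK inequality applies after conditioning. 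A secondary technical point is controlling $p-p_s$: although $p_s$ is well below the window and $p_c(\Lambda)$ is inside it, \cref{cor:Window} is stated for two fixed reals, so I would instead combine \cref{lem:estimate_pc'} (giving $p_c'(\lambda)-p_s=\Theta(m^{-1}\chi(p_s)^{-1})=\Theta(m^{-1}V^{-1/3}\alpha_m^{-1})$) with \cref{Pro:PositionPc} and the monotonicity $p_c(\Lambda)-p_s=O(m^{-1}V^{-1/3}\alpha_m^{-1})$, which is what is actually needed for $m(p-p_s)=O(V^{-1/3}\alpha_m^{-1})$; re-running the geometric sum then gives each step a factor $O(\alpha_m^{2}\cdot\alpha_m^{-1})=O(\alpha_m)$, still $o(1)$, so the conclusion $\E[N(p_c(\Lambda))]=o(V^{4/3})$ survives.
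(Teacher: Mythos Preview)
Your skeleton decomposition rests on a misreading of the event defining $N(p)$. By definition, $N(p)$ counts pairs $(u,v)$ with $u\lr v$ in $H_p$ such that \emph{every} open $u$--$v$ path visits \emph{at least one} $H_{p_s}$-component of size $<M_s$; it does \emph{not} say that every $H_{p_s}$-cluster along the path is small. Your chain-of-small-clusters bound therefore does not dominate $N(p)$: take $u\in A$ and $v\in B$ with $A,B\in\Cf_{p_s,M_s}$ large, and suppose the only route from $A$ to $B$ in $H_p$ passes through a single small cluster $C$. Then $(u,v)$ contributes to $N(p)$, yet every $u$--$v$ path traverses the large clusters $A$ and $B$, so it is never captured by a product of factors $\proba_{p_s}(x_i\lr y_i,\,|\C(x_i)|<M_s)$ at each step.

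The paper's argument exploits the small-cluster constraint only \emph{once}: pick any $u$--$v$ path in $H_{p_c(\Lambda)}$, locate one small $H_{p_s}$-cluster $A$ it visits, and record the first sprinkled edge $(x',x)$ entering $A$ and the last $(y,y')$ leaving it. This yields the disjoint occurrence of $\{u\lr x'\text{ in }H_{p_c}\}$, $\{(x',x),(y,y')\text{ sprinkled},\ x\lr y\text{ in }H_{p_s},\ |\C_{p_s}(x)|<M_s\}$, and $\{y'\lr v\text{ in }H_{p_c}\}$. The two outer connections cost $\chi(p_c(\Lambda))^2=O(V^{2/3})$, the two sprinkled edges $(m(p_c(\Lambda)-p_s))^2$, and the middle block $V\E_{p_s}[|\C|\1_{|\C|\le M_s}]=O(V\sqrt{M_s})$, giving $O(V^{4/3}\alpha_m^{4/3})$ in total. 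A second issue: you invoke \cref{cor:Window} and \cref{Pro:PositionPc} to estimate $p_c(\Lambda)-p_s$, but both are proved \emph{using} the present lemma (see the proof of \cref{Pro:PositionPc}), so that route is circular; the bound $p_c(\Lambda)-p_s=O(m^{-1}V^{-1/3}\alpha_m^{-1/3})$ follows directly from the definition of $p_s$ together with \cref{rem:largerLambda}.
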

\begin{proof} For this proof we write $p_c=p_c(\Lambda)$ for convenience. 
We bound $N(p_c)$ above by the number of pairs of vertices $u,v$ that are connected by a path $\gamma$ in $H_{p_c}$ that goes through a connected component $A$ in $H_{p_s}$ of size $|A|< M_s$.  Given two vertices $u,v$ our analysis depends on whether $u\in A$ and $v\in A$ or not. Let us consider first the case that $u \not \in A$ and $v\not \in A$. In this case, the path $\gamma$ must contain the first edge $(x',x)$ entering $A$ so that $x' \not \in A$ and $x\in A$ and last edge $(y,y')$ leaving $A$ so that $y\not \in A$. These imply that the following events occur disjointly: (see Figure \ref{Fig:4.15})
\begin{compactitem}
\item[(a)] $u$ and $x'$ are connected in $H_{p_c}$;
\item[(b)] $(x,x')$ and $(y,y')$ are closed in $H_{p_s}$ but open in $H_{p_c}$. Furthermore $x,y$ both lie in a common connected component of $H_{p_s}$ of size less than $M_s$, namely $A$. (This event is determined by the status of all the edges with at least one endpoint in $A$.)
\item[(c)] $y'$ and $v$ are connected in $H_{p_c}$.
\end{compactitem}
 \begin{figure}[t]
\centering
\includegraphics[scale=1.5]{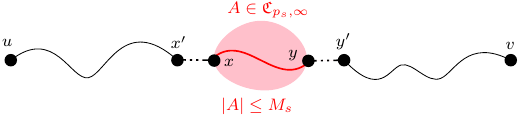}
\caption{\label{Fig:4.15}A representation  from left to right  of the events (a), (b), (c) of Lemma 4.15. There is a path between $u,v$ in $H_{p_c}$ crossing a small connected component $A$ of $H_{p_s}$. 
} 
\end{figure}%
By the union bound and the BKR inequality we bound the number of such $u,v$ by
\[ \sum_{u,(x,x'),(y,y'),v} \proba_{p_c}(u\lr x')
\proba\left ((x,x'),(y,y')\text{ satisfy (b)}\right) 
\proba_{p_c} (y'\lr v ). \]
Since $H$ is transitive, we may sum over all $u$ and over all $v$ so the last quantity equals
\[ \X(p_c)^2\sum_{(x,x'),(y,y')} \proba\left ((x,x'),(y,y')\text{ satisfy (b)} \right). \]
Next, given $H_{p_s}$, for every $x,y$ there exist at most $m$ edges $(x,x')$, $(y,y')$ that are closed in $H_{p_s}$, and each one is independently open in $H_{p_c}$ with probability $(p_c-p_s)/(1-p_s)$. Thus we may bound the last term above by
\[ \X(p_c)^2 m^2 \left (\frac{p_c-p_s}{1-p_s} \right )^2 \sum_{x,y} \proba\left (x,y \text{ lie in a connected component\ of $H_{p_s}$ of size at most $M_s$} \right), \]
which we may rewrite as 
\[ \X(p_c)^2 m^2 \left (\frac{p_c-p_s}{1-p_s} \right )^2 V \E_{p_s}[|\C|\1_{|\C|\leq M_s}]. \]
We now apply \eqref{eq:ClusterTail} giving that $\E_{p_s}[|\C|\1_{|\C|\leq M_s}]= O(\sqrt{M_s})$.  
Therefore, since $p_s:=p_c\cdot (1-V^{-1/3}\alpha_m^{-1/3})$, and $p_c\sim 1/m$, we have $(p_c-p_s)/(1-p_s) \sim  V^{-1/3}\alpha_m^{-1/3}m^{-1}$. Together with our choice $M_s=V^{2/3}\alpha_m^4$, we upper bound this sum by
\begin{align*}
 O(V^{2/3}m^2 \cdot V^{-2/3}\alpha_m^{-2/3} m^{-2} \cdot V \cdot V^{1/3}\alpha_m^{2} ) 
=O(V^{4/3} \alpha_m^{4/3}) = o(V^{4/3}) \, . 
\end{align*}
The other cases are easier and follow a similar reasoning which we briefly describe. If $u\in A$ and $v\in A$, then $v\in \C(u)$ and $|\C(u|\leq M_s$, summing over $u,v$ gives a contribution of $V \E_{p_s}[|\C|\1_{|\C|\leq M_s}]$ which is $o(V^{4/3})$. Lastly, if $u\not \in A$ but $v \in A$, a path connecting $u$ to $v$ has a first entry to $A$ edge $(x',x)$, using the same analysis as before and using the BKR inequality gives a contribution of at most 
$$ \chi(p_c) Vm (p_c-p_s) \E_{p_s}[|\C|\1_{|\C|\leq M_s}] = O(V^{4/3} m  V^{-1/3} m^{-1}\alpha_m^{-2/3} \sqrt{M_s}) \, ,$$
which again is $o(V^{4/3})$ concluding the proof.
\end{proof}

We can now prove that $p'_c(\lambda)$ lies within the critical window. 
\begin{lemma} \label{NotSupercritical} For every $\lambda\in \R$, there exists $\Lambda\in \R$, such that for every $m$ large enough, 
\[ p'_c(\lambda)\leq p_c(\Lambda). \] 
\end{lemma}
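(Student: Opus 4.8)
The plan is to deduce the lemma from the quantitative estimate
\[ \X(p'_c(\lambda)) \le \big(\kkappa(\lambda)+o(1)\big)\,V^{1/3}\qquad\text{as }m\to\infty. \]
Granting this, the lemma follows at once: $p\mapsto \X(p)=\E_p|\C(v)|$ is a strictly increasing polynomial with $\X(p_c(\Lambda))=\kkappa(\Lambda)V^{1/3}$, so it suffices to choose $\Lambda>\lambda$ with $\kkappa(\Lambda)>\kkappa(\lambda)$, which is possible since $\kkappa(\Lambda)\to\infty$ as $\Lambda\to\infty$ (for large $\Lambda$ the first excursion of $W^\Lambda$ away from $0$ has length at least $(2-o(1))\Lambda$ with probability bounded away from $0$). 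So it remains to prove the displayed estimate.

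Working in the simultaneous coupling $H_{p_s}\subseteq H_{p'_c(\lambda)}$ and recalling the two identities for $V\X(p'_c(\lambda))$ and $\E[\sum_{i}|\C^{\comp}_i|^2]$ stated above, an ordered pair $(u,v)$ contributes to $\sum_{u,v}\proba_{p'_c(\lambda)}(u\lr v)$ but not to $\sum_{u,v}\proba_{p'_c(\lambda)}\big(u\stackrel{\V_\star}{\lr}v\big)$ exactly when $u\lr v$ in $H_{p'_c(\lambda)}$ while every open path joining them meets a connected component of $H_{p_s}$ of size $<M_s$; this is precisely the event counted by $N(p'_c(\lambda))$. Subtracting the two identities therefore gives
\[ V\X(p'_c(\lambda)) = \E\Big[\sum_{i\ge 1}|\C^{\comp}_i|^2\Big] + \E\big[N(p'_c(\lambda))\big]. \]
By \cref{MultMasse2} together with \cref{TotalMasses2} we have $\E[\sum_i |\C^{\comp}_i|^2]=(\kkappa(\lambda)+o(1))V^{4/3}$, so the estimate reduces to $\E[N(p'_c(\lambda))]=o(V^{4/3})$. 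This is what \cref{lem:BadPair} gives at $p_c(\Lambda)$, but the proof of \cref{lem:BadPair} uses $\X(p_c(\Lambda))=O(V^{1/3})$ --- which is exactly what we are trying to prove. The way around this is a bootstrap.

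First I would rerun the proof of \cref{lem:BadPair} with $p=p'_c(\lambda)$, keeping $\X(p'_c(\lambda))$ as a free parameter. The only places that proof uses that $p_c(\Lambda)$ is a point in the window are (i) the estimate $(p_c(\Lambda)-p_s)/(1-p_s)\sim V^{-1/3}\alpha_m^{-1/3}/m$ and (ii) the bound $\X(p_c(\Lambda))=O(V^{1/3})$; all other ingredients (the bound \eqref{eq:ClusterTail} at $p_s$, the BKR estimates, the $\sqrt{M_s}$ truncation) involve only $p_s$ and $M_s$. Fact (i) still holds at $p'_c(\lambda)$, because \cref{lem:estimate_pc'} and $\X(p_s)\sim V^{1/3}\alpha_m^{1/3}$ (from \eqref{eq:subcritSucespt}) give $p'_c(\lambda)-p_s=(1+o(1))V^{-1/3}\alpha_m^{-1/3}/m$. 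Carrying $\X(p'_c(\lambda))$ symbolically through the same computation --- and using $\X(p'_c(\lambda))\ge \X(p_s)=\Omega(V^{1/3}\alpha_m^{1/3})$ to absorb the lower-order ``$u$ or $v$ in a small component'' terms --- one obtains
\[ \E\big[N(p'_c(\lambda))\big]=O\!\big(\X(p'_c(\lambda))^2\,V^{2/3}\alpha_m^{4/3}\big)+o(V^{4/3}). \]
Next I need a crude a priori bound on $\X(p'_c(\lambda))$. Since $p_s<p_c(\lambda)\le p_0+O(m^{-1}V^{-1/3})$ by Remark~\ref{rem:largerLambda} and $p'_c(\lambda)-p_s=(1+o(1))V^{-1/3}\alpha_m^{-1/3}/m$ with $\alpha_m^{-1/3}\to\infty$, we get $p'_c(\lambda)\le p_0+A_m m^{-1}V^{-1/3}$ with $A_m=O(\alpha_m^{-1/3})\to\infty$; applying \cite[Theorem~1.3(b)]{HN17} at $p_0+A_m m^{-1}V^{-1/3}$ and monotonicity of $\X$ gives $\X(p'_c(\lambda))=O(A_m^2 V^{1/3})=O(\alpha_m^{-2/3}V^{1/3})$. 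Now bootstrap: plugging this into the $N$-estimate yields $\E[N(p'_c(\lambda))]=O(\alpha_m^{-4/3}V^{2/3}\cdot V^{2/3}\alpha_m^{4/3})=O(V^{4/3})$, so the identity above improves the bound to $\X(p'_c(\lambda))=O(V^{1/3})$; feeding \emph{this} back into the $N$-estimate gives $\E[N(p'_c(\lambda))]=O(V^{2/3}\cdot V^{2/3}\alpha_m^{4/3})=O(V^{4/3}\alpha_m^{4/3})=o(V^{4/3})$, and the identity then yields $\X(p'_c(\lambda))=(\kkappa(\lambda)+o(1))V^{1/3}$, as required.

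The step I expect to require the most care is the adaptation of the proof of \cref{lem:BadPair} to $p=p'_c(\lambda)$ with $\X(p'_c(\lambda))$ kept symbolic: one must verify that every estimate used there depends only on $p_s$, $M_s$ and the symbolic value $\X(p'_c(\lambda))$, and then organize the error terms (in particular the ``$u,v$ in a small component'' and ``first-entry edge'' cases) so that both self-improving steps of the bootstrap close. Everything else reduces to short, routine computations.
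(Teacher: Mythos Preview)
Your argument has a circular dependency that you did not flag. You invoke \cref{TotalMasses2} to get $\E[\sum_i |\C_i^\comp|^2]=(\kappa(\lambda)+o(1))V^{4/3}$, but in the paper \cref{TotalMasses2} is proved \emph{after} \cref{NotSupercritical} and its proof uses \cref{NotSupercritical} in an essential way: the uniform integrability of $V^{-4/3}\sum_i|\C_i^\comp|^2$ is obtained from $\E_{p'_c(\lambda)}[|\C_1|^4]=O(V\chi(p'_c(\lambda))^5)=O(V^{8/3})$, and the last equality needs precisely $\chi(p'_c(\lambda))=O(V^{1/3})$. You correctly noticed the analogous issue with \cref{lem:BadPair} and built a bootstrap around it, but the same problem with \cref{TotalMasses2} breaks that bootstrap: with only the a priori bound $\chi(p'_c(\lambda))=O(\alpha_m^{-2/3}V^{1/3})$, the tree-graph bound gives $V^{-8/3}\E[|\C_1^\comp|^4]=O(\alpha_m^{-10/3})\to\infty$, and feeding this through the proof of \cref{lem:criterion_ui} only yields $\E[V^{-4/3}\sum_i|\C_i^\comp|^2]=O(\alpha_m^{-5/3})$, which is worse than the input. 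So the iteration does not close: you cannot obtain the $O(V^{4/3})$ bound on $\E[\sum_i|\C_i^\comp|^2]$ needed for the first improvement step without already knowing $\chi(p'_c(\lambda))=O(V^{1/3})$.

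The paper's proof sidesteps this entirely by working at the distributional level rather than with expectations. It uses only \cref{MultMasse2} (which does not require \cref{NotSupercritical}) to get $\proba(\sum_i|\C_i^\comp|^2\ge MV^{4/3})\le 1/3$ for some $M$; it rewrites $\sum_i|\C_i^\comp|^2=\sum_i|\C_i|^2-N(p'_c(\lambda))$; and then it observes that $p\mapsto \sum_i|\C_i|^2-N(p)$ is an \emph{increasing} function in the simultaneous coupling (it counts pairs connected by a path staying in $\V_\star$). Comparing with the same quantity at $p_c(\Lambda)$ for large $\Lambda$---where \cite[Theorem~1.3(b)]{HN17} gives $\proba(\sum_i|\C_i|^2\ge 2MV^{4/3})\ge 2/3$ and \cref{lem:BadPair} (applied at a point already known to be in the window) controls $N(p_c(\Lambda))$---forces $p'_c(\lambda)\le p_c(\Lambda)$. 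The monotonicity trick is the key idea you are missing; it is what allows the argument to use \cref{lem:BadPair} only at $p_c(\Lambda)$ and to avoid any expectation bound at $p'_c(\lambda)$.
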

\begin{proof} Write $\ZZ_\llambda = (|\gamma^\llambda_1|,|\gamma^\llambda_2|,\ldots)$. By \cref{MultMasse2}, the sum $V^{-4/3} \sum_{i\geq 1} |\C_i^\comp|^2$ converges in distribution to $\sum_{i\geq 1} |\gamma_i^\lambda|^2$. Notably there exists $M>0$ such that for every $m$ large enough
\[ \proba_{p'_c(\lambda)}\Bigg(\sum_{i\geq 1} |\C_i^\comp|^2\geq MV^{4/3} \Bigg)\leq 1/3, \]
which we may rewrite as 
\begin{equation} \proba_{p'_c(\lambda)}\Bigg(\sum_{i\geq 1} |\C_i|^2 - N(p'_c(\lambda))\geq MV^{4/3} \Bigg )\leq 1/3. \label{19/10/0ha} \end{equation}

On the other hand, by \cite[Theorem 1.3 (b)]{HN17} if $\Lambda>0$ is large enough, for every $m$ large enough,
\[ \proba_{p_c(\Lambda)}\Bigg (\sum_{i\geq 1} |\C_i |^2\geq 2MV^{4/3} \Bigg )\geq 2/3. \]
So by Lemma \ref{lem:BadPair} and Markov's inequality, if $\Lambda>0$ is fixed large enough, then for every $m$ large enough
\begin{equation} \proba_{p_c(\Lambda)}\Bigg (\sum_{i\geq 1} |\C_i|^2-N(p_c(\Lambda))\geq MV^{4/3} \Bigg )\geq 1/2. \label{19/10/0hb}  \end{equation}
To conclude the proof, since the map $p\mapsto \sum_{i\geq 1} |\C_i|^2-N(p)$ is increasing, comparing \eqref{19/10/0ha} and \eqref{19/10/0hb} yields the desired result. 
\end{proof}

The last lemma was necessary to deduce the next result from Proposition \ref{MultMasse2}. 
\begin{lemma} \label{TotalMasses2} For every $\lambda\in \R$, as $m\to \infty$, we have 
$\E[\sum_{i\geq 1} |\C^{\comp}_i|^2]\sim V^{4/3} \kappa(\lambda). $
\end{lemma}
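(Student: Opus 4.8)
The plan is to deduce the convergence of the expectation from the convergence in distribution already established in \cref{MultMasse2}, by proving that the sequence $\big(V^{-4/3}\sum_{i\ge1}|\C_i^\comp|^2\big)_{m\ge1}$ is uniformly integrable. Since $V^{-2/3}(|\C_i^\comp|)_{i\ge1}\to \ZZ_\llambda$ in $\ell^2$, the real-valued random variables $S_m^\comp := V^{-4/3}\sum_{i\ge1}|\C_i^\comp|^2$ converge in distribution to $S:=\sum_{i\ge1}|\gamma_i^\llambda|^2$, which has finite expectation $\kappa(\lambda)$ (by Lemma 25 of \cite{Aldous97}, quoted in \cref{Sec:MultX}). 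In particular $(S_m^\comp)_m$ is tight. So by \cref{lem:criterion_ui} it suffices to establish the second-moment bound
\[ \sup_m \Big\{ \E[(S_m^\comp)^2] - 2\,\E[S_m^\comp]^2 \Big\} < \infty\,. \]

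First I would expand $\E[(S_m^\comp)^2]$ exactly as in the proof of \cref{prop:L2ConvMultGraph}: write $\sum_{i\ge1}|\C_i^\comp|^4 \le |\C_1^\comp|^2\sum_{i\ge1}|\C_i^\comp|^2$ is not quite the shape we want, so instead use the cleaner bound $\sum_{i\ge1}x_i^4\le 2\sum_{i\ne j}x_i^2x_j^2$ valid once we peel off one term via $x_i^4\le x_{i-1}^2x_i^2$ for $i\ge2$; this yields $\E[(S_m^\comp)^2]\le V^{-8/3}\big(\E[|\C_1^\comp|^4] + 2\,\E[\sum_{i\ne j}|\C_i^\comp|^2|\C_j^\comp|^2]\big)$. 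The off-diagonal term rewrites as $\sum_{a,b,c,d}\proba_{p'_c(\lambda)}(a\stackrel{\V_\star}{\lr}b,\ c\stackrel{\V_\star}{\lr}d,\ a\stackrel{\V_\star}{\nlr}c)$ where the sum is over vertices in $\V_\star=\bigcup_{A\in\comp_{p_s,M_s}}A$ (recall $|\C_i^\comp|=\sum_{A\in\C_i^\comp}|A|$ counts such vertices); by the BKR inequality this is at most $\big(\sum_{a,b}\proba_{p'_c(\lambda)}(a\stackrel{\V_\star}{\lr}b)\big)^2 = \E[S_m^\comp]^2 V^{8/3}$. Hence $\E[(S_m^\comp)^2]-2\E[S_m^\comp]^2 \le V^{-8/3}\E[|\C_1^\comp|^4]$, and it remains to bound $\sup_m V^{-8/3}\E[|\C_1^\comp|^4]$.

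For the largest-component fourth moment I would transfer the tail bound from the multiplicative graph. Recall $|\C_i^\comp|=V^{2/3}\weight{\C_i^\comp}$ with $\weight{\C_i^\comp}=\sum_{A\in\C_i^\comp}w_A$, so $V^{-8/3}\E[|\C_1^\comp|^4]=\E[\weight{\C_1^\comp}^4]$. On the event $\Omega_\e^k$ from \eqref{eq:omegaEK}, the matching argument in the proof of \cref{MultMasse2} (equation \eqref{eq:matchCiCj}) shows $\weight{\C_1^\comp}\le (1+\e)\weight{\C_1^\times}$ once $m$ is large; more directly, $\big|\weight{\C_1^\comp}^2-\weight{\C_1^\times}^2\big|\le \sum_{A\ne B}w_Aw_B\1_{\ne_{A,B}}$, which is $o_\proba(1)$ by \cref{TechnicalCompare}, so $\weight{\C_1^\comp}^4$ is bounded in probability, and a crude deterministic bound $\weight{\C_1^\comp}\le\sigma_2=O_\proba(\chi(p_s)/V^{1/3})=O_\proba(\alpha_m^{1/3})$ controls the tail to make $\E[\weight{\C_1^\comp}^4]$ bounded — alternatively one runs the Limic-style exploration directly for $G_\comp$ as in \cref{lem:multiplicative_graph_largest_cc} to get $\limsup_m\proba(\weight{\C_1^\comp}>x)\le Ce^{-cx}$, which gives $\sup_m\E[\weight{\C_1^\comp}^4]<\infty$ immediately. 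The main obstacle is precisely this step: unlike in \cref{Sec:MultX}, the edge probabilities $p_{A,B}$ of $G_\comp$ are not exactly multiplicative, so one cannot quote \cref{lem:multiplicative_graph_largest_cc} verbatim; one must either redo the exploration-process tail bound with the true $p_{A,B}$ (using $p_{A,B}\le q_\lambda\Delta_{A,B}/(mV^{1/3})$ and the $\ell^2$-control of $\Delta_{A,B}$ from \cref{prop:Delta} to dominate the increments), or combine \cref{TechnicalCompare} with the multiplicative-graph bound and a deterministic cap. With $\sup_m\E[\weight{\C_1^\comp}^4]<\infty$ in hand, \eqref{eq:towards_ui} holds for $(S_m^\comp)_m$, \cref{lem:criterion_ui} gives uniform integrability, and therefore $\E[S_m^\comp]\to\E[S]=\kappa(\lambda)$, i.e.\ $\E[\sum_{i\ge1}|\C_i^\comp|^2]\sim V^{4/3}\kappa(\lambda)$, as claimed.
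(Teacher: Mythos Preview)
Your overall architecture is exactly the paper's: deduce convergence of $\E[S_m^\comp]$ from \cref{MultMasse2} by proving uniform integrability via \cref{lem:criterion_ui}, expand the square, use BKR on the off-diagonal to get $\E[\sum_{i\ne j}|\C_i^\comp|^2|\C_j^\comp|^2]\le \E[S_m^\comp]^2 V^{8/3}$, and reduce to $\sup_m V^{-8/3}\E[|\C_1^\comp|^4]<\infty$. That part is fine.

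The gap is in how you bound $\E[|\C_1^\comp|^4]$. Your ``crude deterministic bound $\weight{\C_1^\comp}\le\sigma_2$'' is false: $\sigma_2=\sum_A w_A^2$ is a sum of squares, whereas $\weight{\C_1^\comp}=\sum_{A\in\C_1^\comp}w_A$ is a sum of weights, and there is no such inequality (the correct deterministic cap is $\weight{\C_1^\comp}\le\sigma_1=V^{-2/3}|\V_\star|$, which is of order $V^{1/3}$ and useless). Your transfer via \cref{TechnicalCompare} only gives $\weight{\C_1^\comp}^2-\weight{\C_1^\times}^2=o_\proba(1)$, hence tightness of $\weight{\C_1^\comp}$, but tightness plus a useless deterministic cap does not yield a fourth-moment bound. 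Redoing the Limic exploration with the true $p_{A,B}$ would require new estimates (the $\ell^2$ control of $\Delta_{A,B}$ from \cref{prop:Delta} is a second-moment statement and does not obviously feed into a pathwise exponential tail for the exploration process).

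The paper's route is much shorter and avoids all of this: by construction, every connected component of $G_\comp$ is contained in a connected component of $H_{p'_c(\lambda)}$, so $|\C_1^\comp|\le |\C_1|$ deterministically, where $\C_1$ is the largest cluster of the hypercube at $p'_c(\lambda)$. \cref{NotSupercritical} shows $p'_c(\lambda)\le p_c(\Lambda)$ for some fixed $\Lambda$, so $\chi(p'_c(\lambda))=O(V^{1/3})$; then the tree-graph inequalities give $\E_{p'_c(\lambda)}[|\C_1|^4]\le \E_{p'_c(\lambda)}[\sum_v|\C(v)|^4]=O(V\chi(p'_c(\lambda))^7)$ --- or more simply $\E[|\C_1|^4]\le \sum_v\E[|\C(v)|^3]=O(V\chi^5)$ after noting $|\C_1|^4\le\sum_i|\C_i|^4=\sum_v|\C(v)|^3$ --- which is $O(V^{8/3})$. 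This is the missing idea: exploit that $G_\comp$ sits inside hypercube percolation rather than trying to run a separate exploration on $G_\comp$ itself.
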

\begin{proof} We adapt the proof that we have already used in Section~\ref{Sec:MultX}. 
Since the argument does not change, we shall be faster. 
As we already have the weak limit in \cref{MultMasse2} it is enough to show that $\sum_{i\geq 1} |\C^{\comp}_i|^2$ is uniformly integrable. To do so, it suffices to show that as $m\to \infty$, 
\[ \E\Bigg [\bigg(\sum_{i\geq 1} |\C^{\comp}_i|^2\bigg )^2 \Bigg ] \leq O(1)+ 2\E\Bigg [\sum_{i\geq 1} |\C^{\comp}_i|^2\Bigg ]^2.\]

And to this end, again by using the BK inequality, it is enough to prove that $\E[ |\C^{\comp}_1|^4 ]=O(1)$. Observing that each connected component of $G^{\comp}$ is a subset of a connected component of the percolated hypercube $H_{p'_c(\lambda)}$, it suffices to show $\E_{p'_c(\lambda)}[ |\C_1|^4 ]=O(V^{8/3})$. 

By the tree-graph inequality (see (6.94) in \cite{Grimmett}) we have $\E_{p'_c(\lambda)}[ |\C_1|^4 ]=O(V\X(p'_c(\lambda)^5))$ thus the desired result follows from Lemma~\ref{NotSupercritical}.
\end{proof}

We now have all the key elements to prove Proposition~\ref{Pro:PositionPc}.
\begin{proof}[Proof of Proposition \ref{Pro:PositionPc}.] We first take $(q_\lambda^m)_{m\in \N}$ any sequence such that \eqref{def:qlambda} holds. By Lemma \ref{TotalMasses2} together with \cref{lem:BadPair} and \cref{NotSupercritical}, for every $\lambda\in \R$, as $m\to \infty$, we have
\[ \chi(p_c'(\lambda)) \sim V^{1/3} \kappa(\lambda).\]
On the other hand, by definition \eqref{def:pc} for every $\lambda\in \R$
\[ \chi(p_c(\lambda))= V^{1/3} \kappa(\lambda).\]
Since $\kappa(\cdot)$ is strictly increasing (see \cite[Corollary 24]{Aldous97}), it follows that for every $\lambda_1<\lambda_2$ as long as $m$ is large enough, 
\[ p'_c(\lambda_1)<p_c(\lambda_2) \quad \text{and} \quad  p_c(\lambda_1)<p'_c(\lambda_2). \]
Sandwiching $p_c(\lambda)$ between $p'_c(\lambda-\epsilon)$ and $p'_c(\lambda+\epsilon)$, and taking the limit as $\epsilon\to 0$, we apply Lemma~\ref{lem:estimate_pc'} to obtain
\[ p_c(\lambda)=p_s+\frac 1 m \cdot \bigg(\frac {1-p_s} {\chi(p_s)}+\lambda V^{-1/3}+o(V^{-1/3})\bigg)\,,\]
so that $p_c(\lambda)$ and $p_c'(\lambda)$ have the same asymptotic behavior. Finally, by elementary calculus as in the proof of Corollary~\ref{cor:Window}, we obtain
\[ - mV^{1/3}\log \bigg ( \frac{1-p_c(\lambda)}{1-p_s}\bigg)= \frac{V^{1/3}}{\chi(p_s)}+\lambda+o(1)\,.\]
It follows that, if we chose $q_\lambda$ as the left-hand side above to get $p_c(\lambda)=p'_c(\lambda)$ by \eqref{def:p'clambda}, then \eqref{def:qlambda} is still satisfied, so that this choice is valid. This concludes the proof.
\end{proof}

\subsection{Proof of Theorem \ref{main_thm}} \label{sec:mainthmPart1}
The proof is similar to the proof of \cref{MultMasse2} and we provide it here briefly for completeness. By \cref{Pro:PositionPc} we may assume that $p_c(\lambda)=p'_c(\lambda)$. We work on the simultaneous coupling that allows us to consider $H_{p_c}$ and $H_{p_s}$ under the same probability space. Recall (\cref{sec:position}) that $N=N(p_c)$ denotes the number of pairs of vertices $\{u,v\}$ of $H$ that are connected in $H_{p_c}$ and such that any path between them visits a component of $H_{p_s}$ of size less than $M_s$.

As usual we write $(\C_i)_{i\geq 1}$ for the connected components of $H_{p_c}$ in decreasing order of their sizes and $(\C^\comp_j)_{j\geq 1}$ for the components of $G_\comp$. We abuse notation and write $\C_i^\comp$ for $\cup_{A \in \C_i^\comp} A$, that is, each component of $G_\comp$ will be considered here as a subset of vertices of $H$. In particular, since $p_c(\lambda)=p'_c(\lambda)$, for every $j \geq 1$ there exists a unique $I_j$ such that $\C^\comp_j \subset \C_{I_j}$. 

Then by \cref{lem:BadPair}, $N(p_c)=o_\proba(V^{4/3})$, and we may rewrite $N(p_c)$ as 
\[ N(p_c) 
= \sum_{i \in \N}  \bigg( |\C_i|^2-\sum_{j\in \N:I_j=i} |\C^\comp_j|^2 \bigg)
\leq \sum_{i \in \N} |\C_i| \left (|\C_i|-\max_{j\in  \N:I_j=i} |\C^\comp_j| \right ). \]
Hence, for every $\e>0$, as long as $m$ is large enough, for every $i\in \N$ with $|\C_i|\geq \e V^{2/3}$ there exists a $J_i$ such that $|\C_{i}|-|\C^\comp_{J_i} |\leq \e V^{2/3}/4$. Note that $J_i$ is in this case unique.

Then as $\e\to 0$, by \cref{MultMasse2} with high probability the component sizes $(|\C_j^\comp|)_{1\le  j\le k}$ are separated by at least $4\e V^{2/3}$ and larger than $4\e V^{2/3}$. It follows, by reproducing the inductive argument below \eqref{eq:matchCiCj2}, that for every $1\leq a\leq k$ we have $I_a=a$ and $J_a=a$. Thus for every $1\leq i \leq k$,  $| |\C_i|-|\C^\comp_i||\leq \e V^{2/3}$.
By \cref{MultMasse2} we deduce $V^{-2/3}(|\C_1|,\ldots,|\C_k|)$ converges in distribution to $(|\gamma_1|,\ldots,|\gamma_k|)$ where $(\gamma_i)_{i\ge 1}$ are excursions of $(W^\lambda_t)_{t\ge 0}$ above its running infimum (see \cref{sec:intro}). 
Lastly, tightness follows as usual since $\E \sum_{i \geq 1} |\C_i|^2 = \kappa( \lambda) V^{4/3}$.
\qed
 
\section{Convergence of metric space in critical hypercube percolation} \label{sec:FullConvergence} 

The goal of this section is to complete the proof of \cref{thm:main_metric}. We begin with some tightness estimates proving first that the conditions of \cref{GP=>GHP} hold, and second that the sequence of mm-spaces $(M_i)_{i\ge 1}$ is tight for the $L^4$ topology. It follows that it suffices to prove that the convergence in \eqref{thm:ABBGInHypercubeMain} holds with respect to the product GP topology. By \cref{equivGP} this amounts to proving that the ${k \choose 2}$ rescaled distances between every pair of $k$ independent uniformly drawn vertices within finitely many connected components converge to corresponding quantities in the limit sequence $\MM_\lambda$. 

This joint GP convergence of multiple connected components is easily reduced to the case of a single one. In Section~\ref{Conclusion} we present the main argument carrying out the comparison between one fixed connected component of the component graph $G_\comp$ and the corresponding one in $H_{p_c}$. We put everything together in Section~\ref{sec:proof_main_thm_metric}, where the proof of \cref{thm:main_metric} is formally completed.

\subsection{Tightness of the critical hypercube percolation} \label{Sec:GHPtight} 
We will need the following tightness result to verify the second condition in \cref{GP=>GHP} as well as in a few other places in the proofs contained in this section.

\begin{proposition} \label{thm:ghpTightness2} Consider percolation at $p_c(\lambda)$. We have for every $\eta>2$,
$$ \lim_{\e \to 0} \sup_m \proba \Big ( \exists x\in V(H),\delta\leq 1 \, : \, \partial B(x,\delta V^{1/3})\neq \emptyset \andd |B(x,\delta V^{1/3})|\leq \e \delta^{\eta} V^{2/3} \Big ) = 0 \, .$$
\end{proposition}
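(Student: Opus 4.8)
The plan is a dyadic union bound over scales, feeding each scale into \eqref{eq:ExistsLongThin}. Since \eqref{eq:IntVolume} and \eqref{eq:IntBoundary} hold at $p_c(\lambda)$ by \cref{rem:largerLambda}, the estimate \eqref{eq:ExistsLongThin} is available at $p_c(\lambda)$ with constants $C,c$ depending only on $\lambda$; this is what makes everything uniform in $m$.

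First I would reduce the event over all $\delta\le 1$ to a countable union over dyadic scales $\delta_k=2^{-k}$, $k\ge 0$. If the bad event occurs at some $\delta\le 1$, that is there is $x$ with $\partial B(x,\delta V^{1/3})\neq\emptyset$ and $|B(x,\delta V^{1/3})|\le\e\delta^\eta V^{2/3}$, pick $k$ with $\delta_k\le\delta\le 2\delta_k$. Then $|B(x,\delta_k V^{1/3})|\le|B(x,\delta V^{1/3})|\le 2^\eta\e\,\delta_k^\eta V^{2/3}$, and since a geodesic in the cluster of $x$ realising distance $\lfloor\delta V^{1/3}\rfloor\ge\lfloor\delta_k V^{1/3}\rfloor$ meets every smaller sphere, also $\partial B(x,\delta_k V^{1/3})\neq\emptyset$. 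Hence the bad event at scale $\delta$ with parameter $\e$ forces the bad event at the dyadic scale $\delta_k$ with parameter $2^\eta\e$, so
\[\proba\big(\exists x,\,\delta\le1:\ \partial B(x,\delta V^{1/3})\neq\emptyset,\ |B(x,\delta V^{1/3})|\le\e\delta^\eta V^{2/3}\big)\le\sum_{k\ge0}\proba\big(\exists v:\ |B(v,\delta_k V^{1/3})|\le 2^\eta\e\,\delta_k^\eta V^{2/3},\ \partial B(v,\delta_k V^{1/3})\neq\emptyset\big).\]

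Next I would apply \eqref{eq:ExistsLongThin} to each summand with $R=\delta_k V^{1/3}$ and $M=2^\eta\e\,\delta_k^\eta V^{2/3}$. The hypotheses $R\ge cMV^{-1/3}$ and $R\ge c\sqrt M$ reduce, using $0<\delta_k\le 1$ and $\eta>2$ (so $\eta-1>0$ and $\tfrac{\eta}{2}-1>0$), to $1\ge c2^\eta\e\,\delta_k^{\eta-1}$ and $\delta_k^{1-\eta/2}\ge c2^{\eta/2}\e^{1/2}$, which hold for all $k$ once $\e$ is below a threshold depending only on $\eta,c$. Plugging in $\tfrac1R\vee V^{-1/3}=V^{-1/3}/\delta_k$, $R^2/M=\delta_k^{2-\eta}/(2^\eta\e)$ and $V/M=V^{1/3}/(2^\eta\e\,\delta_k^\eta)$, the $k$-th term is at most $\tfrac{C}{2^\eta\e\,\delta_k^{\eta+1}}\exp(-c\,\delta_k^{2-\eta}/(2^\eta\e))$. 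Substituting $u_k:=\delta_k^{2-\eta}=2^{k(\eta-2)}\ge1$ (so $u_{k+1}/u_k=2^{\eta-2}=:\rho>1$), $\beta:=\tfrac{\eta+1}{\eta-2}$ and $a:=c/(2^\eta\e)$, this becomes $\tfrac{C}{2^\eta\e}u_k^{\beta}e^{-au_k}$; since $u_k=\rho^k\ge1+k(\rho-1)$, summing over $k\ge0$ gives at most $\tfrac{C}{2^\eta\e}e^{-a}\sum_{k\ge0}(\rho^\beta e^{-a(\rho-1)})^k\le\tfrac{2C}{2^\eta\e}e^{-a}$ once $\e$ is small enough that $\rho^\beta e^{-a(\rho-1)}<\tfrac12$. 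Thus the whole probability is at most $\tfrac{2C}{2^\eta\e}\exp(-c/(2^\eta\e))$, which tends to $0$ as $\e\to0$ uniformly in $m$.

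The main obstacle — and the only genuinely delicate point — is verifying the hypotheses of \eqref{eq:ExistsLongThin} simultaneously at \emph{all} dyadic scales, and controlling the resulting series: this is exactly where $\eta>2$ is essential, since with $\eta=2$ the ratio $R^2/M$ would be constant in $k$ and the series $\sum_k\delta_k^{-(\eta+1)}$ would diverge. The rest is routine: non-integer radii may be replaced by their floors at the cost of harmless constants (equivalently, only scales with $\delta V^{1/3}\ge 1$ matter, since $|B|\ge1$ forces $\e\delta^\eta V^{2/3}\ge1$ on the event), and the series over $k$ is dominated by its $k=0$ term, corresponding to balls of radius $\sim V^{1/3}$ and volume $\le\e V^{2/3}$ — essentially the tightness of the rescaled diameter and size.
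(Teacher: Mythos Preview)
Your proof is correct and follows essentially the same approach as the paper's: a dyadic union bound over scales $\delta_k=2^{-k}$, applying \eqref{eq:ExistsLongThin} at each scale, and summing the resulting bounds (the paper simply asserts the sum is $O(\e^{-1}e^{-c/\e})$, while you carry out the geometric-series estimate explicitly). Your more careful verification of the hypotheses of \eqref{eq:ExistsLongThin} and the remark on integer radii are welcome additions that the paper leaves implicit.
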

\begin{proof}
If there exists a vertex $x$ and $\delta\leq 1$ such that 
$$\partial B(x,\delta V^{1/3})\neq \emptyset \quad \andd \quad |B(x,\delta V^{1/3})|\leq \e \delta^{\eta} V^{2/3}\ , $$
then by taking the unique $k\in \N$ such that $2^{-k}\leq \delta < 2^{-k+1}$ we deduce that there exists $k\in \N$ such that the event
$$ E_k := \{\exists x,\, \partial B(x,2^{-k} V^{1/3})\neq \emptyset \andd |B(x,2^{-k} V^{1/3})|\leq \e2^{\eta-\eta k} V^{2/3} \}$$
occurs. We upper bound $\proba(E_k)$ by applying \eqref{eq:ExistsLongThin} with $R_k=2^{-k}V^{1/3}$ and $M_k= \e2^{\eta-\eta k} V^{2/3}$. We are allowed to since $\eta>2$ implies that $2^{-k-1} V^{1/3} \geq c \e 2^{\eta-\eta k} V^{2/3} V^{-1/3}$ and $ 2^{-k}V^{1/3}\geq c\sqrt{\e 2^{\eta-\eta k}V^{2/3}}$ hold for all $k$ whenever $\eps$ is smaller than some fixed small positive constant. So for some $C,c>0$,
\[ \proba(E_k) \leq C2^{k}V^{1/3} e^{-c2^{-2k+\eta k}/\e}\frac{V}{\e 2^{\eta-\eta k}V^{2/3}}\leq \e^{-1}C2^{k+\eta k}e^{-c2^{(\eta-2)k}/\e}\]
Summing over all $k$, the probability of the desired event is at most 
$$ C \sum_{k\geq 0} \e^{-1} 2^{k+\eta k} e^{-c2^{(\eta-2)k} /\e}  = O(\e^{-1} e^{-c/\e}) \, ,$$
which tends to $0$ as $\eps\to 0$. 
\end{proof}
Next, to deduce the convergence of \cref{main_thm} from the weaker convergence for the weak GHP topology, it suffices to show the following result:
\begin{lemma} \label{GHP4tight} Consider percolation at $p_c(\lambda)$ for some fixed $\lambda\in \R$. For any fixed $c>0$, we have 
\[ \lim_{k\to \infty} \limsup_{m\to \infty} \proba \Bigg (\sum_{i\geq k} |\C_i|^4>c V^{8/3} \Bigg )=0\,
\qquad
\text{and}
\quad
\lim_{k\to \infty} \limsup_{m\to \infty} \proba \Bigg (\sum_{i\geq k} \diam(\C_i)^4>c V^{4/3} \Bigg )=0.\]
\end{lemma}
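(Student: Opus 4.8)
\textbf{Proof plan for Lemma~\ref{GHP4tight}.}

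The plan is to derive both tightness statements from second-moment estimates that are already available in the excerpt. For the sizes, note that $\sum_{i\geq k}|\C_i|^4 \leq |\C_k|^2 \sum_{i\geq k}|\C_i|^2 \leq |\C_k|^2 \sum_{i\geq 1}|\C_i|^2$, and that we have proved $V^{-4/3}\sum_{i\geq 1}|\C_i|^2$ converges in distribution (to $\sum_i |\gamma_i^\lambda|^2$, which is a.s.\ finite by Lemma~25 of \cite{Aldous97}); in particular this sequence is tight. So it suffices to show that $V^{-2/3}|\C_k| \to 0$ in probability as $k\to\infty$, uniformly in $m$, i.e.\ $\lim_{k\to\infty}\limsup_m \proba(|\C_k|>\delta V^{2/3})=0$ for every $\delta>0$. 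This follows immediately from the convergence in distribution of $V^{-2/3}(|\C_1|,|\C_2|,\dots)$ to $\ZZ_\lambda$ in $\ell^2$ established in \cref{main_thm}, since $\ZZ_\lambda\in\ell^2$ forces $|\gamma_k^\lambda|\to 0$ a.s., combined with the fact that $\proba(|\C_k|>\delta V^{2/3})$ is monotone in $k$ and the $k$-th coordinate converges weakly. Then for fixed $c>0$, choosing $k$ large so that $\limsup_m\proba(|\C_k|^2 > c/(2A) \cdot V^{4/3})$ is small, where $A$ is chosen with $\limsup_m \proba(\sum_i |\C_i|^2 > A V^{4/3}) $ small, closes the argument via a union bound.

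For the diameters, I would run the same scheme but use \cref{thm:ghpTightness2} in place of the size moment bound. First, for any fixed component $\C(x)$ with $x$ a vertex of it, apply \cref{thm:ghpTightness2} with the choice $\delta = \diam(\C(x))/V^{1/3}$ (capped at $1$; the regime $\diam(\C(x))>V^{1/3}$ is controlled separately by \eqref{eq:MaxDiamCrit} which says $\max_x\diam(\C(x))/V^{1/3}$ is tight) and $\eta$ fixed slightly above $2$, say $\eta=3$. The key deterministic point is that on the complement of the event in \cref{thm:ghpTightness2} we have $|\C(x)| \geq \e\,(\diam(\C(x))/V^{1/3})^{\eta} V^{2/3}$ for every $x$, hence
\[
\diam(\C_i)^{\eta} \leq \e^{-1} V^{(\eta-2)/3}\,|\C_i|\,,
\]
for all $i$ simultaneously, with probability at least $1-o_{\e}(1)$ uniformly in $m$. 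Raising to the power $4/\eta$ and using $\eta<4$ together with $|\C_i|\leq |\C_1|$ gives $\diam(\C_i)^4 \leq \e^{-4/\eta} V^{4(\eta-2)/(3\eta)} |\C_i|^{4/\eta} = \e^{-4/\eta} (V^{-2/3}|\C_i|)^{4/\eta} V^{8/3\eta}\cdot V^{4(\eta-2)/(3\eta)}$; after simplification the powers of $V$ combine to $V^{4/3}$, so $\sum_{i\geq k}\diam(\C_i)^4 \leq C_\e V^{4/3} \sum_{i\geq k}(V^{-2/3}|\C_i|)^{4/\eta}$. Since $4/\eta<2$ and $V^{-2/3}(|\C_i|)_i$ converges in $\ell^2\subset\ell^{4/\eta}$ to $\ZZ_\lambda$ (using \cref{main_thm}), the tail sum $\sum_{i\geq k}(V^{-2/3}|\C_i|)^{4/\eta}$ is tight and its limit vanishes as $k\to\infty$; combining with the $o_\e(1)$ bad event and \eqref{eq:MaxDiamCrit} finishes the proof by the usual $\e$-then-$k$-then-$m$ argument.

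I expect the main obstacle to be bookkeeping rather than a genuine difficulty: one must carefully handle the capping at $\delta=1$ in \cref{thm:ghpTightness2} (so the inequality $|\C(x)|\geq \e\delta^\eta V^{2/3}$ must be combined with the separate control \eqref{eq:MaxDiamCrit} of large-diameter components, which contributes only finitely many components of diameter $O(V^{1/3})$ and hence a bounded number of terms each of size $O(V^{4/3})$ in the sum of fourth powers — these can be absorbed into the finitely-many-indices part), and one must choose $\eta\in(2,4)$ so that both the hypothesis $\eta>2$ of \cref{thm:ghpTightness2} and the exponent inequality $4/\eta<2$ hold, which is why $\eta=3$ is a convenient choice. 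The only other point requiring a little care is checking that convergence in $\ell^2$ of the rescaled sizes does transfer to convergence (and hence tightness of tails) in $\ell^{4/\eta}$; this is immediate since on the bad-event complement all the masses are bounded and $\ell^2\hookrightarrow \ell^p$ for $p\geq 2$, while the tail of the $\ell^{4/\eta}$-sum of the limit $\ZZ_\lambda$ vanishes because $\ZZ_\lambda\in\ell^2\subset\ell^{4/\eta}$.
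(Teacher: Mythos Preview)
Your argument for the sizes is correct and clean. The issue is in the diameter part: your inclusion ``$\ell^2\subset\ell^{4/\eta}$'' goes the wrong way. With $\eta>2$ (forced by \cref{thm:ghpTightness2}) you have $4/\eta<2$, and for sequence spaces $\ell^p\subset\ell^q$ only when $p\le q$, so $\ell^{4/\eta}\subset\ell^2$, not the reverse. Concretely, knowing that $(V^{-2/3}|\C_i|)_{i\ge 1}$ is tight in $\ell^2$ gives you no control whatsoever on $\sum_{i\ge k}(V^{-2/3}|\C_i|)^{4/\eta}$; for instance $x_i=i^{-3/4}$ is in $\ell^2$ but $\sum_i x_i^{4/3}=\sum_i i^{-1}=\infty$. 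So the last step of your diameter argument does not close, and with your choice $\eta=3$ it cannot be repaired by a change of exponent alone.

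The paper handles exactly this point by avoiding $\ell^2$ altogether and using a first-moment bound: one restricts to clusters with $|\C_i|\le\beta V^{2/3}$ and estimates
\[
\E\Bigg[\sum_{i\ge 1}|\C_i|^{4/\eta}\,\mathbf 1_{|\C_i|\le\beta V^{2/3}}\Bigg]
= V\,\E\big[|\C(v)|^{4/\eta-1}\mathbf 1_{|\C(v)|\le\beta V^{2/3}}\big]
\le C\,\beta^{(8-3\eta)/(2\eta)}\,V^{8/(3\eta)}
\]
via the cluster-tail bound \eqref{eq:ClusterTail}. This only works when $4/\eta-1\in(1/2,1)$, i.e.\ $\eta\in(2,8/3)$, which is why the paper chooses $\eta$ in that range rather than $\eta=3$. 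After Markov's inequality one can make the contribution of small clusters as small as desired by shrinking~$\beta$, and the finitely many clusters with $|\C_i|>\beta V^{2/3}$ are absorbed by taking $k$ large. Your reduction $\diam(\C_i)^4\le C_\e V^{4/3}(V^{-2/3}|\C_i|)^{4/\eta}$ is correct and matches the paper; it is only the subsequent control of the $\ell^{4/\eta}$-tail that needs the first-moment input.
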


\begin{proof}We focus on the second part as the first is simpler and can be proven similarly. We begin by applying \cref{thm:ghpTightness2} with some fixed $\eta>2$ to be chosen later
$$ \lim_{\e \to 0} \sup_m \proba \Big ( \exists i\geq 1,\delta \leq 1 \, : \, \diam(\C_i) \geq \delta V^{1/3} \andd |\C_i|\leq \e \delta^{\eta} V^{2/3} \Big ) = 0 \, .$$
Hence when choosing $\delta=\min\{1,\diam(\C_i)/V^{1/3}\}$ we obtain 
$$ \lim_{\e \to 0} \sup_m \proba \Big ( \exists i\geq 1 \, : \, |\C_i|\leq \e \min\big\{V^{2/3},\diam(\C_i)^{\eta}V^{2/3-\eta/3}\big\} \Big ) = 0 \, ,$$
or in other words, for any $\alpha>0$, as long as $\e>0$ is small enough, with probability at least $1-\alpha$ we have
\begin{equation}\label{eq:DiamVolume} \forall i\geq 1 \,\, |\C_i|\leq \e V^{2/3} \Longrightarrow \diam(\C_i)^{4}\leq \e^{-4/\eta}|\C_i|^{4/\eta} V^{4/3-8/(3\eta)}\,.
\end{equation}

On the other hand, for any $\beta>0$ we have as usual
$$ \E \Bigg [ \sum_{i \geq 1}|\C_i|^{4/\eta}{\bf 1}_{|\C_i|\leq \beta V^{2/3}} \Bigg ]= V\E \Big [  |\C(v)|^{4/\eta -1}{\bf 1}_{|\C(v)|\leq \beta V^{2/3}} \Big ] \, ,$$ 
and, when $\eta<8/3$ so that $4/\eta-1\in (1/2,1)$, we can estimate the latter using \eqref{eq:ClusterTail}, obtaining
\[V\E\Big [|\C(v)|^{4/\eta -1}{\bf 1}_{|\C(v)|\leq \beta V^{2/3}}  \Big ]  
\leq V \sum_{1\le k \le \beta V^{2/3}} k^{4/\eta-2} \proba(|\C|\ge k)
\le \frac{8-2\eta}{8 -3\eta} C \beta^{{8-3\eta \over 2\eta}} V^{{8 \over 3\eta}} \, .\]
By Markov's inequality, it follows that with probability at least $1-\alpha$
$$ \sum_{i \geq 1}|\C_i|^{4/\eta}{\bf 1}_{|\C_i|\leq \beta V^{2/3}} \leq \frac{8-2\eta}{8 -3\eta} C\alpha^{-1}\beta^{{8-3\eta \over 2\eta}} V^{{8 \over 3\eta}} \, .$$

Together with \eqref{eq:DiamVolume} this implies that with probability at least $1-2\alpha$ for all $\e,\beta>0$ small enough,
$$ \sum_{i \geq 1} \diam(\C_i)^{4} {\bf 1}_{|\C_i| \leq \min(\e,\beta)V^{2/3}} \leq \frac{8-2\eta}{8 -3\eta} C \alpha^{-1} \e^{-4/\eta} \beta^{{8-3\eta \over 2\eta}} V^{4/3}\, .$$
We now fix some $\eta\in(2,8/3)$, choose first $\e>0$ small enough so that \eqref{eq:DiamVolume} holds, then choose $\beta>0$ small enough (in terms of $\e$ and $\alpha$) so that the right-hand side of the above inequality is at most $cV^{4/3}$ (the constant $c>0$ is from the statement). Lastly, since $\E \sum_{i} |\C_i|^2 = V\chi(p_c) = O(V^{4/3})$ we obtain that there exists a $k$ such that with probability at least $1-\alpha$ we have that $|\C_i|\leq \min(\e,\beta)V^{2/3}$ for all $i\geq k$. With the above we conclude that with probability at least $1-3\alpha$ we have
$$ \sum_{i \geq k} \diam(\C_i)^{4} \leq cV^{4/3} \, ,$$
finishing the proof.
\end{proof}

For the proofs in the next section, we will need the following consequence of Proposition \ref{thm:ghpTightness2}.
\begin{lemma} \label{Lem:HTightness} Consider percolation at $p_c=p_c(\lambda)$, let $r\in \N$ and denote by $\{U_{j}\}_{j \in \N}$ a sequence of i.i.d.\ random variables which, given $H_{p_c}$, are distributed as uniform vertices of $\C_r$. Then for any $\e>0$ and $\delta>0$ there exists $N=N(\e,\delta)$ such that as long as $m$ is large enough
\[ \proba_{p_c}\big(\dH(\C_r,\{U_{j}\}_{1\leq j \leq N}) \geq \delta V^{1/3}\big) \geq 1-\e \, .\]
\end{lemma}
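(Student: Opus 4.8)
The plan is to show that, with high probability, the component $\C_r$ is too large to be covered by $N$ intrinsic balls of radius $\rho:=\lceil\delta V^{1/3}\rceil$ centred at the sampled vertices, so that some point of $\C_r$ remains at distance $\ge\delta V^{1/3}$ from all of $U_1,\dots,U_N$. Write $d=d_\cube$ for the graph metric on $H_{p_c}$. Since $\{U_j\}_{1\le j\le N}\subseteq\C_r$, one has $\dH(\C_r,\{U_j\}_{1\le j\le N})=\max_{a\in\C_r}\min_{1\le j\le N}d(a,U_j)$, so the event in the statement contains $\{\C_r\not\subseteq\bigcup_{j=1}^N B(U_j,\rho)\}$: any $a\in\C_r$ lying outside every ball satisfies $d(a,U_j)>\rho\ge\delta V^{1/3}$ for each $j$. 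Thus it suffices to produce $N=N(\e,\delta)$ with $\proba_{p_c}\big(\big|\bigcup_{j=1}^N B(U_j,\rho)\big|<|\C_r|\big)\ge1-\e$ for all $m$ large.

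The two ingredients are a lower bound on $|\C_r|$ and an upper bound on the covered volume. For the former I would use \cref{main_thm} (proved in \cref{sec:mainthmPart1}): since $V^{-2/3}|\C_r|\to|\gamma^\lambda_r|$ in distribution and $|\gamma^\lambda_r|>0$ a.s., there is $c=c(\e,r)>0$ with $\proba_{p_c}(|\C_r|\ge cV^{2/3})\ge1-\e/3$ once $m$ is large. For the latter the key input is the mean-field intrinsic-volume bound \eqref{eq:IntVolume} (the estimate underlying \cref{thm:ghpTightness2}, valid at $p_c$ by \cref{rem:largerLambda}): by transitivity, $\E_{p_c}\big[\sum_{x\in\C_r}|B(x,\rho)|\big]\le\sum_{x\in V(H)}\E_{p_c}|B(x,\rho)|\le C V\rho=O(\delta V^{4/3})$, so Markov's inequality gives $\sum_{x\in\C_r}|B(x,\rho)|\le K\delta V^{4/3}$ off an event of probability $\le\e/3$, for a suitable $K=K(\e,\lambda)$. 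Conditionally on $H_{p_c}$ the $U_j$ are i.i.d.\ uniform on $\C_r$; hence on the intersection of the two previous events, $\E\big[\sum_{j=1}^N|B(U_j,\rho)|\mid H_{p_c}\big]=\frac{N}{|\C_r|}\sum_{x\in\C_r}|B(x,\rho)|\le\frac{NK\delta}{c}V^{2/3}$, and one more (conditional) application of Markov bounds $\sum_{j=1}^N|B(U_j,\rho)|$ by $\frac{3NK\delta}{c\e}V^{2/3}$ off a further event of conditional probability $\le\e/3$.

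Combining these three events, with probability at least $1-\e$ we have simultaneously $|\C_r|\ge cV^{2/3}$ and $\big|\bigcup_{j=1}^N B(U_j,\rho)\big|\le\frac{3NK\delta}{c\e}V^{2/3}$. It then remains to choose $N$ small enough that $\frac{3NK\delta}{c\e}<c$, i.e.\ any integer $1\le N<\frac{c^2\e}{3K\delta}$; on the event above the $N$ balls then cover strictly fewer than $|\C_r|$ vertices, so $\C_r\not\subseteq\bigcup_j B(U_j,\rho)$ and hence $\dH(\C_r,\{U_j\}_{1\le j\le N})>\delta V^{1/3}$, which is the claim. The one point requiring care --- and really the only ``obstacle'' --- is that such an $N\ge1$ exists only when $\delta$ is small relative to $\e$; this is exactly the regime in which the lemma is invoked in \cref{sec:FullConvergence} (where one lets $\delta\to0$), and for this range the ball-covering/moment scheme above is entirely routine, the only genuinely new estimate being the first-moment bound \eqref{eq:IntVolume}.
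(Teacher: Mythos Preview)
You have taken the displayed inequality at face value (with $\geq$) and proved, correctly, that for \emph{small} $N$ the samples fail to form a $\delta V^{1/3}$-net with high probability. The argument is sound in the regime you identify, and you honestly flag that it breaks down unless $\delta$ is small relative to~$\e$.

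But the inequality in the statement is a typo: the intended conclusion is
\[
\proba_{p_c}\big(\dH(\C_r,\{U_j\}_{1\le j\le N})\le \delta V^{1/3}\big)\ge 1-\e,
\]
i.e., for $N$ \emph{large} the samples form a $\delta V^{1/3}$-net. This is clear from how the lemma is invoked in the proof of \cref{CompareCompHyp} (see the line preceding \eqref{eq:CubeDistAiUi}, where it is used precisely to approximate every $x_i\in\C_r$ by some $U_{j_i}$ within $\delta_2 V^{1/3}$), and it is also what the paper's own proof establishes. The literal statement you prove is in fact false for general $(\e,\delta)$---take $\delta$ larger than the typical rescaled diameter---which is exactly the obstruction you ran into.

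The paper's argument for the intended inequality goes in the opposite direction from yours: pick a maximal $\tfrac{\delta}{2}V^{1/3}$-separated set $\{V_1,\dots,V_K\}\subset\C_r$, and show that each $V_j$ is hit (within $\tfrac{\delta}{2}V^{1/3}$) by some $U_{j'}$. The key input is a uniform \emph{lower} bound on ball volumes $\min_{v\in\C_r}|B(v,\tfrac{\delta}{4}V^{1/3})|$ coming from \cref{thm:ghpTightness2}, which simultaneously controls $K$ and the miss probability; one then takes $N$ large. Your upper bound on $\sum_x|B(x,\rho)|$ via \eqref{eq:IntVolume} is the wrong direction and plays no role in the intended claim.
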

\begin{proof} Conditionally on $H_{p_c}$ let $\{V_{j}\}_{1 \leq j \leq K}$ be a maximal set of vertices so that the distance between any two is at least $\delta V^{1/3}/2$. It suffices show that for large enough $N$ (which may only depend on $\e,\delta$) the probability that for every $j \in [K]$ there exists $j' \in [N]$ such that $d(V_{j},U_{j'})\leq \delta V^{1/3}/2$ is at least $1-\e$.
For each $N$, by the union bound, conditionally on $H_{p_c}$, the probability of the complement of this event is at most
\[ \sum_{1\leq j \leq K} \bigg (1- \frac{|B(V_{j},\delta V^{1/3}/2)|}{|\C_r|} \bigg )^N.\]

We write $B_{r,\delta} = \min _{v\in \C_r} |B(v,\delta V^{1/3}/4)|$ and note that $K\leq |\C_r|/B_{r,\e}$ since the balls $B(V_{j},\delta V^{1/3}/4)$ are disjoint for ${1\leq j\leq K}$. 
Therefore
\begin{equation} \proba(\dH(\C_r,\{U_{j}\}_{1\leq j \leq N}) \geq \delta V^{1/3} \, \mid \, H_{p_c}) \leq   \frac{|\C_r|}{B_{r,\delta}}\left (1- \frac{B_{r,\delta}}{|\C_r|} \right )^N. \label{2/7/11h} \end{equation}
Next let $R>0$ and distinguish the right-hand side above depending on whether $|\C_r|/B_{r,\delta}\leq R$ or not. By taking the expectation in \eqref{2/7/11h} we get
\[ \proba_{p_c}(\dH(\C_r,\{U_{j}\}_{1\leq j \leq N})\geq \delta V^{1/3}) \leq R(1-1/R)^N+\proba(|\C_r|/B_{r,\delta}>R) \, .\]
\cref{thm:ghpTightness2} shows that $(V^{2/3}/B_{r,\delta})_{m\in \N}$ is a tight sequence, which together with the fact that $\E|\C_r|^2 \leq V\chi(p_c) = O(V^{4/3})$ implies that $(|\C_r|/B_{r,\delta})_{m\in \N}$ as is also tight. Hence we can choose $R=R(\e,\delta)<\infty$ large enough that the second term on the right-hand side  above is at most $\e/2$ uniformly for all $m$ large enough. We then take $N$ depending on $R$ large enough so that the first term in the right-hand side above is at most $\e/2$ and conclude the proof. 
\end{proof}

\subsection{Convergence in the Gromov--Prokhorov distance} \label{Conclusion}

We begin with some preparations and notation. Recall that we are working with the simultaneous coupling of $H_{p_s}$ and $H_{p_c}$, that is, we have i.i.d.\ random variables $\{U_e\}_{E(H)}$ uniform on $[0,1]$ and for any $p\in [0,1]$ the graph $H_p$ is just the collection of edges with $U_e \leq p$. In this way $H_{p_s}$ is a subgraph of $H_{p_c}$. Recall also that $\Cf_{p_s,M_s}$ is the set of connected components of the percolated hypercube $H_{p_s}$ with size at least $M_s$; it is the vertex set of the sprinkled component graph $G_\comp$ of \cref{sec:CompGraph} and the edges of $G_\comp$ are pairs of components of $\Cf_{p_s,M_s}$ which are linked by an hypercube edge $e$  with $U_e \in (p_s,p_c]$; indeed using \cref{Pro:PositionPc} we assume that $p'_c(\lambda)$ used in the definition of $G_\comp$ equals $p_c(\lambda)$. Also let $d_\comp$ be the shortest path metric on $G_\comp$. We define two distances on $V(H)$

\begin{definition}\label{def:distanceCubeComp} For any two vertices $u,v\in V(H)$
\begin{itemize}
  \item $d_\cube(u,v)$ is the length of the shortest path between $u$ and $v$ in $H_{p_c}$; we set $d_\cube(u,v)=\infty$ if $u,v$ are not connected in $H_{p_c}$.  

  \item $d_{\comp}(u,v) = d_\comp(\C_{p_s}(u),\C_{p_s}(v))$ where $\C_{p_s}(x)$ is the component of $x$ in $H_{p_s}$; we set $d_\comp(u,v)=\infty$ whenever $\C_{p_s}(u)$ and $\C_{p_s}(v)$ are not connected in $G_\comp$.
\end{itemize}
\end{definition}

\noindent {\bf Notation.} In the rest of this section we will often draw two random vertices $U,V$ which conditioned on $H_{p_c}$ are independently uniformly drawn vertices of $\C_r$ for some $r\in \N$ fixed. We will then claim that with high/low probability some event occurs; our meaning is always about the probability in the space of the simultaneous coupling described above, and \emph{not} the conditional probability given $H_{p_c}$. \\

The main goal of this subsection is to prove the following. In \cref{sec:proof_main_thm_metric} we explain how this together with some of the abstract theory presented in \cref{sec:topology} completes the proof of \cref{thm:main_metric}.

\begin{proposition} \label{CompareCompHyp} Fix $r\in \N$ and let $U,V$ be two vertices that conditioned on $H_{p_c}$ are independently uniformly drawn vertices of $\C_r$. Then for any $\delta>0$, with probability tending to $1$ we have
$$ | d_{\cube}(U,V)-d_\comp(U,V)\chi(p_s)| \leq \delta V^{1/3} \, .$$
\end{proposition}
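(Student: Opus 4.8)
The plan is to establish the two inequalities $d_\cube(U,V) \le (1+o_\proba(1)) \chi(p_s) d_\comp(U,V) + o(V^{1/3})$ and $d_\cube(U,V) \ge (1-o_\proba(1))\chi(p_s) d_\comp(U,V) - o(V^{1/3})$ separately; combining them with the fact that $d_\comp(U,V)$ is $O_\proba(1)$ (which follows from $\chi(p_s) d_\comp$ being comparable to the tight rescaled diameter $V^{1/3}$, using \eqref{eq:subcritMaxDiam} and Proposition~\ref{pro:conv_GP_comp}) yields the claim. For the \emph{upper bound}, I would take a geodesic in $G_\comp$ between $\C_{p_s}(U)$ and $\C_{p_s}(V)$, say through components $A_0=\C_{p_s}(U), A_1,\ldots, A_k=\C_{p_s}(V)$ with $k=d_\comp(U,V)$. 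Each consecutive pair $A_{i},A_{i+1}$ is joined by at least one hypercube edge open in $H_{p_c}$, and one can build an $H_{p_c}$-path from $U$ to $V$ by concatenating: a path inside $A_0$ from $U$ to the relevant endpoint, then the crossing edge, then a path inside $A_1$, etc. The length contributed by each $A_i$ is at most $\diam(A_i)$ in the $H_{p_s}$-metric, which by the subcritical diameter bound \eqref{eq:subcritMaxDiam} is $O(\chi(p_s)\log(V/\chi(p_s)^3))$ deterministically — but this crude bound loses a logarithmic factor, so the real work is to show that along a geodesic of $G_\comp$ the \emph{typical} component traversed has diameter $(1+o(1))\chi(p_s)$, not the worst case. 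This is where one must use that $U,V$ are uniformly random: I expect the argument routes through a first-moment / size-biasing computation showing that the total $H_{p_s}$-length accumulated inside the traversed components is, with high probability, $(1+o(1))\chi(p_s)\cdot d_\comp(U,V)$, exploiting Lemma~\ref{lem:sigma3FirstMom} (the sharp estimate $\E|\C(v)|^2 \sim \chi(p_s)^3$) to control the size-biased expected diameter.

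For the \emph{lower bound}, I would argue that any $H_{p_c}$-geodesic from $U$ to $V$ must pass through a sequence of $H_{p_s}$-components and cross between them using edges in $(p_s,p_c]$; each such crossing corresponds to an edge of $G_\comp$, so the number of distinct components visited is at least $d_\comp(U,V)+1$ (after discarding visits to components of size less than $M_s$, which by Lemma~\ref{lem:BadPair} contribute negligibly — this is exactly why the event $N(p_c)=o_\proba(V^{4/3})$ is relevant, as it controls pairs whose connecting paths only use small components). The length of the $H_{p_c}$-geodesic is then at least the sum, over the visited large components, of the $H_{p_c}$-internal distance traversed within each; one needs a lower bound saying that a geodesic entering a component $A$ at one vertex and leaving at another typically spends $\ge (1-o(1))\chi(p_s)$ steps inside. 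Here the subcritical intrinsic-volume estimate \eqref{eq:IntVolume}, i.e. $\E_{p_s}|B(x,r)| \le Cr$, combined with the fact that $|A| \ge M_s = V^{2/3}\alpha_m^4$ forces the radius of a typical large component to be $\gtrsim \chi(p_s)$, should do the job, again after size-biasing by component size.

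The main obstacle, I expect, is the \textbf{matching of the constant $\chi(p_s)$ exactly} (not just up to constants): both directions require showing that the $H_{p_s}$ (or $H_{p_c}$) graph distance \emph{between the two endpoints used by the $G_\comp$-geodesic} inside a size-biased large subcritical component concentrates around $\chi(p_s)$. The upper bound needs this as an \emph{upper} estimate on a typical diameter and the lower bound as a \emph{lower} estimate on a typical traversal length; neither is immediate from the bare moment bounds, since e.g. $\chi(p_s) = \E|\C|$ is a mean, not a high-probability value, and the relevant quantity is really about the geometry (radius/diameter) of a size-biased cluster. I anticipate the proof handles this by a careful application of \eqref{eq:ExistsLongThin} (the ``long-and-thin ball'' exponential bound) to rule out atypically stretched or atypically squat large components at scale $\chi(p_s)$, together with Lemma~\ref{Lem:HTightness} to discretize $\C_r$ by finitely many random points so that it suffices to control distances between a bounded number of uniform pairs. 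A secondary technical point is ensuring that the concatenated paths in the upper bound are genuinely paths (no unwanted shortcuts or repeated vertices causing length to be \emph{smaller} than claimed is fine for the upper bound, but one must be careful that the crossing edges and the internal paths are all present in $H_{p_c}$ simultaneously, which they are by the simultaneous coupling since $H_{p_s}\subseteq H_{p_c}$).
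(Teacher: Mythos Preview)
Your strategy attacks the constant $\chi(p_s)$ from the wrong angle, and this is a genuine gap. You try to recover $\chi(p_s)$ from the \emph{geometry of individual subcritical components}: for your upper bound you want the typical component on a $G_\comp$-geodesic to have diameter $(1+o(1))\chi(p_s)$, and for your lower bound you want each traversal through a large component to cost $(1-o(1))\chi(p_s)$ steps. Neither holds component-wise: a cluster of size just above $M_s=V^{2/3}\alpha_m^4$ has diameter of order $\sqrt{M_s}=V^{1/3}\alpha_m^2\ll \chi(p_s)$, while the largest clusters have diameter $\Theta(\chi(p_s)\log(1/\alpha_m))\gg \chi(p_s)$; and there is no mechanism making the components visited by a $G_\comp$-geodesic ``typical'' in a way that averages these out. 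Size-biasing via Lemma~\ref{lem:sigma3FirstMom} controls $\E|\C|^2$, not diameters along a geodesic. The paper sidesteps component geometry entirely. The key observation (Lemma~\ref{Interpole1}) is that, \emph{conditional on the $H_{p_c}$-geodesic $\gamma$}, each edge of $\gamma$ is independently $p_s$-closed with probability $1-p_s/p_c=(1+o(1))/\chi(p_s)$; a second-moment argument then gives that the number of $p_s$-closed edges on $\gamma$ is $(1+o(1))\,d_\cube(U,V)/\chi(p_s)$, and after loop-erasure this produces a self-avoiding $G_\comp$-path $\Gamma$ with $\chi(p_s)L(\Gamma)=d_\cube(U,V)+o(V^{1/3})$. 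Since $d_\comp\le L(\Gamma)$, this already gives $\chi(p_s)d_\comp\le d_\cube+\delta V^{1/3}$ --- what you call the ``lower bound'' is in fact the easy direction.

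The hard direction is showing that $\Gamma$ is close to a $G_\comp$-geodesic, i.e.\ that there is no much shorter route in $G_\comp$. This is the main missing idea in your proposal. The paper handles it by introducing the \emph{full} component graph $G_{\rcomp}$ (keeping even the small $H_{p_s}$-clusters) and proving that large components of $G_{\rcomp}$ contain no cycle of length below $\alpha V^{1/3}/\chi(p_s)$ (Lemma~\ref{lem:GCompHaveNoSmallCycles}). One then chops the $G_\comp$-geodesic between $U$ and $V$ into pieces of length $\beta V^{1/3}/\chi(p_s)$, approximates the chop points by the uniform random points $U_j$ from Lemma~\ref{Lem:HTightness} (this is the real role of that lemma), applies Lemma~\ref{Interpole1} to each consecutive pair $(U_{j_i},U_{j_{i+1}})$ to get short $G_\comp$-paths of the right length, and uses the absence of short cycles to conclude these short paths \emph{are} geodesics in $G_{\rcomp}$. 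Summing the pieces --- with the rough two-sided comparisons of Lemmas~\ref{TransformTight} and~\ref{lem:DCompTightness}, which hold for \emph{all} pairs in $\V_\star$ and are the reason $G_{\rcomp}$ rather than $G_\comp$ is needed --- finishes the argument.
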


For the proof we denote by $\vs = \cup_{A \in \Cf_{p_s,M_s}} A$ the set of vertices that are in components of size at least $M_s$ in $H_{p_s}$. 
We have implicitly argued that a random vertex $U$ drawn from the $r$-largest component $\C_r$ will belong to $\vs$ with high probability (see the first paragraph in the proof of \cref{Interpole2} below), but it is not clear that $U,V$ are connected using only vertices of the connected components of $\Cf_{p_s,M_s}$ --- indeed, because we have removed small components (of size less than $M_s$), we have to rule out the possibility that every path in $H_{p_c}$ between $U$ and $V$ visits such a small component. This is the content of the next lemma.

\begin{lemma} \label{Interpole2} Fix $r\in \N$ and let $U,V$ be two vertices that conditioned on $H_{p_c}$ are independently uniformly drawn vertices of $\C_r$. Then with probability at least $1-o(1)$ we have that $U,V \in \vs$ and $d_\comp(U,V)<\infty$, and the shortest path between $U$ and $V$ in $H_{p_s}$ only uses vertices in $\vs$.
\end{lemma}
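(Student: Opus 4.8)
The plan is to deduce everything from \cref{lem:BadPair}, which applied with $\Lambda=\lambda$ gives $\E[N(p_c)]=o(V^{4/3})$ and hence $N(p_c)=o_{\proba}(V^{4/3})$, together with the already-proved \cref{main_thm}: since $V^{-2/3}|\C_r|\to|\gamma^\lambda_r|$ in distribution with $|\gamma^\lambda_r|>0$ almost surely, for every $\eta>0$ there is $\epsilon>0$ with $\limsup_m\proba(|\C_r|\le\epsilon V^{2/3})\le\eta$. Throughout I use $p'_c(\lambda)=p_c(\lambda)$ from \cref{Pro:PositionPc}, so that any hypercube edge $\{a,b\}$ open in $H_{p_c}$ with $a\in A$, $b\in B$ for distinct $A,B\in\Cf_{p_s,M_s}$ forces $(A,B)\in E(G_\comp)$.

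First I would bound the three ``bad'' events in terms of $N(p_c)$. If $u\in\C_r$ has $|\C_{p_s}(u)|<M_s$, then every $H_{p_c}$-path out of $u$ immediately visits an $H_{p_s}$-component of size below $M_s$, so each ordered pair $(u,v)$ with $v\in\C_r\setminus\{u\}$ is counted by $N(p_c)$; hence $\#\{u\in\C_r:u\notin\vs\}\le N(p_c)/(|\C_r|-1)$. Likewise, whenever $u,v\in\C_r$ are such that \emph{every} $H_{p_c}$-path between them passes through an $H_{p_s}$-component of size $<M_s$, the pair $(u,v)$ is counted by $N(p_c)$, so there are at most $N(p_c)$ such ordered pairs. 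Consequently, conditionally on $H_{p_c}$ and for independent uniform $U,V\in\C_r$, both $\proba(U\notin\vs\mid H_{p_c})$ and $\proba(V\notin\vs\mid H_{p_c})$ are at most $N(p_c)/(|\C_r|(|\C_r|-1))$, while the conditional probability that every $H_{p_c}$-path between $U$ and $V$ meets a component of size $<M_s$ is at most $N(p_c)/|\C_r|^2$.

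Summing these with a union bound and splitting on whether $|\C_r|\le\epsilon V^{2/3}$, the probability that at least one of the three events occurs is, for $m$ large, at most $\proba(|\C_r|\le\epsilon V^{2/3})+\E[\min(1,\,5N(p_c)\epsilon^{-2}V^{-4/3})]$; the second term tends to $0$ by dominated convergence since $N(p_c)/V^{4/3}\to0$ in probability and the integrand is bounded, and the first is at most $\eta$ in the limit, so letting $\eta\to0$ shows that with probability $1-o(1)$ we have $U,V\in\vs$ together with a path $\gamma=(U=x_0,x_1,\dots,x_k=V)$ in $H_{p_c}$ all of whose vertices lie in $\vs$. Finally, writing $A_i=\C_{p_s}(x_i)\in\Cf_{p_s,M_s}$, each consecutive pair $x_i,x_{i+1}$ is joined by a $p_c$-open hypercube edge, so either $A_i=A_{i+1}$ or $(A_i,A_{i+1})\in E(G_\comp)$; hence $A_0,\dots,A_k$ with repetitions removed is a walk in $G_\comp$ from $\C_{p_s}(U)$ to $\C_{p_s}(V)$, giving $d_\comp(U,V)<\infty$, and $\gamma$ itself is the $\vs$-confined path between $U$ and $V$ required in the statement. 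There is no genuine obstacle beyond the bookkeeping around the definition of $N(p_c)$; the only place that uses real new input is the dominated-convergence step, which relies on the a priori lower bound $|\C_r|\gtrsim V^{2/3}$ supplied by \cref{main_thm}.
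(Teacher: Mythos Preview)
Your argument correctly handles the first two assertions ($U,V\in\vs$ and $d_\comp(U,V)<\infty$), and for those parts your use of $N(p_c)$ together with $|\C_r|=\Omega_\proba(V^{2/3})$ is essentially the paper's argument. However, there is a genuine gap in the third assertion.

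The statement (with the evident typo $H_{p_s}\to H_{p_c}$) asks that the \emph{shortest} path in $H_{p_c}$ between $U$ and $V$ stays in $\vs$, and this is exactly how the lemma is used in the proof of \cref{Interpole1}: there one takes $\gamma$ to be a \emph{geodesic} and needs all its vertices in $\vs$ both for the length comparison $L(\Gamma)\le(1+o(1))d_\cube(U,V)/\chi(p_s)$ and for the cycle-length bound in the lower-bound argument. Your $N(p_c)$ argument only shows that \emph{some} $H_{p_c}$-path between $U$ and $V$ avoids small components: the pair $(U,V)$ not being counted by $N(p_c)$ means by definition that not every path between them meets a small component, so at least one path avoids them. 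It says nothing about the geodesic. Indeed, there is no reason the geodesic cannot pass through a small component while a longer detour stays in $\vs$; in that situation $(U,V)$ is not counted by $N(p_c)$ yet the conclusion you need fails.

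The paper's proof of this part is genuinely different: it works directly with the geodesic $\gamma$ and shows that, conditionally on $\gamma$, the $p_s$-closed edges along $\gamma$ are independent with probability $1-p_s/p_c=V^{-1/3}\alpha_m^{-1/3}$, so with high probability any two such edges are at distance at least $\ell:=V^{1/3}\alpha_m$ from each other and from the endpoints. This forces every vertex $v$ on $\gamma$ to have $\diam(\C_{p_s}(v))\ge\ell$; an application of the long-thin ball estimate \eqref{eq:ExistsLongThin} then rules out the possibility that any such component has size below $M_s$. This geometric argument about the spacing of closed edges is the missing idea.
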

\begin{proof} Recall that $\E N_{p_c} = o(V^{4/3})$ where $N_{p_c}$ is the number of pairs of vertices connected in $H_{p_c}$ such that every path between them goes through a vertex belonging to a connected component of $H_{p_s}$ of size at most $M_s$. In particular, the number of pairs of vertices in $\C_r$ where one of them does not belong to $\vs$ is $o_{\proba}(V^{4/3})$. By \cref{main_thm} we have that $|\C_r| = \Omega_\proba(V^{2/3})$ and so the first assertion of the lemma follows.

Next, set $\ell=V^{1/3}\alpha_m$ and apply \eqref{eq:ExistsLongThin} with $M=M_s$ and $R=\ell$ (it is immediate to verify the conditions for this choice of $M$ and $R$) to obtain
\be\label{eq:NoThinLongClustersAgain} \proba\big(\exists v, |\C_{p_s}(v)|\leq M_s, \diam(\C_{p_s}(v))\geq \ell\big)
=O(V/(\ell M_s)) e^{-cR^2/M_s}
=O(\alpha_m^{-5} e^{-c\alpha_m^{-2}})
=o(1). \ee

Let $\gamma$ be a geodesic path from $U$ to $V$ in $H_{p_c}$ and denote by $e_1,\ldots, e_k$ its edges. Assume first that $k\geq \ell$. By \eqref{eq:subcritMaxDiam} it is the case that $k\leq AV^{1/3}$ with probability at least $1-\e$ for any $\e>0$ as long as $A=A(\e)<\infty$ is large enough. The edges of $\gamma$ are independently closed in $H_{p_s}$ each with probability $1-p_s/p_c=V^{-1/3}\alpha_m^{-1/3}$ by our choice of $p_s$ in \eqref{def:ps}. Hence, conditionally on $\gamma$, the probability that there exist two $p_s$-closed edges of $\gamma$ within distance $\ell$ is, by the union bound, at most $k \ell (V^{-1/3}\alpha_m^{-1/3})^2 = o(1)$ by our choice of $\ell$ and our upper bound on $k$. By a similar calculation, the conditional probability of observing a $p_s$-closed edge in $\gamma$ within distance $\ell$ from either $e_1$ or $e_k$ is $2\ell V^{-1/3}\alpha_m^{-1/3}=o(1)$. Thus, with probability at least $1-O(\e)$ any vertex $v$ of $\gamma$ is such that $\diam(\C_{p_s}(v))\geq \ell$, and so \eqref{eq:NoThinLongClustersAgain} yields that the vertices of $\gamma$ all belong to $\vs$ with high probability. 

Lastly, if $k \leq \ell$, then by the same calculation, the probability that any of $\gamma$'s edges are $p_s$-closed is $o(1)$ hence with probability $1-o(1)$ the vertices $U$ and $V$ belong to same cluster in $H_{p_s}$, and $U,V\in\vs$ so in particular $d_\comp(U,V)=0$. 
\end{proof}

\begin{lemma} \label{Interpole1} Fix $r\in \N$ and let $U,V$ be two vertices that conditioned on $H_{p_c}$ are independently uniformly drawn vertices of $\C_r$. Then for any $\delta>0$, with probability $1-o(1)$, there exists a self-avoiding path $\Gamma$ in $G_\comp$ between (the $H_{p_s}$ components of) $U$ and $V$ of length $L(\Gamma)$ satisfying
\[ d_\cube(U,V)-\delta V^{1/3}\leq \chi(p_s) L(\Gamma)\leq d_\cube(U,V)+\delta V^{1/3}.  \]
\end{lemma}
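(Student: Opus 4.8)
Here is how I would approach the proof. Throughout, work in the simultaneous coupling of \cref{sec:counting_bad_pairs}, so that $H_{p_s}\subseteq H_{p_c}$ and, crucially, a geodesic of $H_{p_c}$ is a function of $H_{p_c}$ alone; consequently, conditionally on $H_{p_c}$, each of its edges lies in $H_{p_s}$ independently with probability $p_s/p_c=1-\rho$, where $\rho:=V^{-1/3}\alpha_m^{-1/3}$ (``sprinkling''). The plan is to fix a geodesic $\gamma$ in $H_{p_c}$ from $U$ to $V$, of length $k:=d_\cube(U,V)$, and to read off $\Gamma$ from its $H_{p_s}$-component structure. Call the edges of $\gamma$ that are \emph{not} in $H_{p_s}$ its \emph{sprinkled} edges, and let $N$ be their number; they cut $\gamma$ into $N+1$ segments, each contained in a single connected component of $H_{p_s}$. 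By \cref{Interpole2}, with probability $1-o(1)$ all these components have size at least $M_s$, hence belong to $\Cf_{p_s,M_s}=V(G_\comp)$, and each sprinkled edge joins two of them; so the sequence $B_0,B_1,\dots,B_N$ of components visited by $\gamma$ is a walk in $G_\comp$ from $\C_{p_s}(U)$ to $\C_{p_s}(V)$, and any self-avoiding sub-path $\Gamma$ extracted from it (for instance its loop-erasure) satisfies $L(\Gamma)\le N$.

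First I would dispose of the degenerate range of $k$. By \eqref{eq:MaxDiamCrit} (legitimately used at $p_c(\lambda)$ thanks to \cref{rem:largerLambda}), $k\le AV^{1/3}$ with probability $1-\e$ for $A=A(\e)$ large. Fix a sequence $\delta_m\to 0$ with $\delta_m\gg\alpha_m^{1/3}$, e.g.\ $\delta_m=\alpha_m^{1/6}$. On $\{k<\delta_m V^{1/3}\}$ the lower bound in the statement is vacuous (its right-hand side is negative once $m$ is large), while $\E[N\mid\gamma]=k\rho<\delta_m\alpha_m^{-1/3}=\alpha_m^{-1/6}=o(V^{1/3}/\chi(p_s))$, so Markov gives $\chi(p_s)L(\Gamma)\le\chi(p_s)N=o_\proba(V^{1/3})\le\delta V^{1/3}$ (and if even $k\le V^{1/3}\alpha_m$ one takes $L(\Gamma)=0$ by the last part of \cref{Interpole2}). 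On $\{k\ge\delta_m V^{1/3}\}$ one has $k\rho\ge\alpha_m^{-1/6}\to\infty$, so a Chernoff bound for the binomial $N\mid\gamma$ yields $N=(1+o_\proba(1))k\rho$; since $\rho\gg V^{-1/3}$ and $\rho=o(1)$, \eqref{eq:subcritSucespt} gives $\chi(p_s)=(1+o(1))\rho^{-1}$, whence $\chi(p_s)N=(1+o_\proba(1))k$. Together with $L(\Gamma)\le N$ and $k\le AV^{1/3}$, this already yields the upper bound $\chi(p_s)L(\Gamma)\le k+\delta V^{1/3}$ with probability $1-o(1)$.

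For the lower bound it remains to show that, with probability $1-o(1)$, the walk $B_0,\dots,B_N$ has \emph{no} repeated vertex, so that $\Gamma$ equals it and $L(\Gamma)=N$, giving $\chi(p_s)L(\Gamma)=(1+o_\proba(1))k\ge k-\delta V^{1/3}$. If $B_i=B_j$ for some $i<j$, then $\gamma$ contains a sub-path $\pi$ joining two distinct vertices $w_0,w_L$ of a common $H_{p_s}$-cluster $B$, with interior disjoint from $B$ and with first and last edges $p_s$-closed; being a sub-geodesic, $\pi$ has length $L\le\diam(B)\le D:=C\chi(p_s)\log(V/\chi(p_s)^3)$ by \eqref{eq:subcritMaxDiam}. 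Call this a \emph{bad detour}. The number of repetitions is at most the number of bad detours, and the plan is to prove $\E[\#\{\text{bad detours along }\gamma\}]=o(1)$ by a first-moment estimate exactly in the spirit of \cref{lem:BadPair}: a bad detour rooted at a $p_s$-closed edge $\{w_0,w_1\}$ of $\gamma$ forces, for some $z\in\C_{p_s}(w_0)\setminus\{w_0\}$, a \emph{disjoint} occurrence of $\{w_0\leftrightarrow z\text{ in }H_{p_s}\}$ (witnessed inside $B$) and $\{w_1\leftrightarrow z\text{ in }H_{p_c}\text{ in }\le D\text{ steps, off }B\setminus\{z\}\}$ (witnessed outside $B$); one then applies the BKR inequality, sums over $z$ using $\sum_z\proba_{p_s}(w_0\leftrightarrow z)=\chi(p_s)$, the intrinsic one-arm bound \eqref{eq:IntBoundary} and intrinsic volume bound \eqref{eq:IntVolume} for the $H_{p_c}$-factor, and the cluster-tail bound \eqref{eq:ClusterTail} to control $|B|$, so that the per-edge probability is $O(\alpha_m (\log V)^{2})$; summed over the at most $A\alpha_m^{-1/3}$ sprinkled edges of $\gamma$, and absorbing the random choice of $U,V\in\C_r$ into the count exactly as in \cref{lem:BadPair} (dividing the analogue of $N(p_c)$ by $|\C_r|^2=\Omega_\proba(V^{4/3})$), this is $o(1)$. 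Hence with probability $1-o(1)$ there are no bad detours, the walk is self-avoiding, $L(\Gamma)=N$, and both inequalities of the lemma hold.

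The hard part is the last step: showing the geodesic $\gamma$ never re-enters an $H_{p_s}$-cluster it has already left. The difficulty is that $\gamma$ is not a simple object relative to the $H_{p_s}$-structure (the walk $B_0,\dots,B_N$ is not Markovian, so a sequential exploration is unavailable), which forces the estimate to be a global union-bound/BKR computation over all possible ``detour configurations'', and it forces us to obtain \emph{exactly zero} bad detours with high probability (a single one could, in principle, erase a large fraction of the walk since $D\gg N$). Making the dependence between $\gamma$ and the $p_s$/$p_c$-statuses precise, and verifying that the ``off $B$'' constraint genuinely lowers the per-edge probability from $\Theta(\chi(p_s)/m)$ down to $o(\alpha_m^{1-\e})$, is where the bulk of the work lies; it may also help to choose $\gamma$ to minimize its number of sprinkled edges among all $U$–$V$ geodesics, which removes all bad detours except those arising from a genuine within-cluster shortcut. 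The remaining ingredients — the component walk lying in $G_\comp$, the concentration of $N$, and the identity $\chi(p_s)=(1+o(1))\rho^{-1}$ — are routine given \cref{Interpole2}, \eqref{eq:MaxDiamCrit} and \eqref{eq:subcritSucespt}.
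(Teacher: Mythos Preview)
Your overall architecture---take a $H_{p_c}$-geodesic $\gamma$, read off the walk $B_0,\dots,B_N$ of $H_{p_s}$-components, loop-erase to a self-avoiding $\Gamma$, and get the upper bound from $L(\Gamma)\le N$ together with the binomial concentration of $N$---is exactly the paper's. The difference is entirely in the lower bound, and here your execution has a real gap while the paper's is different and cleaner.

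You try to show \emph{zero} repeats by a per-edge first-moment bound along $\gamma$: for each sprinkled edge $(w_0,w_1)$, bound the probability it is ``bad'' by $\sum_z \proba_{p_s}(w_0\leftrightarrow z)\,\proba_{p_c}(w_1\stackrel{\le D}{\leftrightarrow} z)$ and then sum over sprinkled edges. The obstacle you yourself flag is genuine: $\gamma$ is a function of $H_{p_c}$, so you cannot treat the per-edge event as if $(w_0,w_1)$ were a fixed deterministic edge; and even ignoring this, your two-point sum does not obviously become small---it is essentially a mixed-$p$ bubble, and nothing in \eqref{eq:IntBoundary}--\eqref{eq:IntVolume} or \eqref{eq:ClusterTail} forces it below $O(1)$. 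The paper avoids both problems at once. First, it does \emph{not} work edge-by-edge along $\gamma$: it globally counts triples $(u,v,(x,x'))$ such that $(x,x')$ is $p_s$-closed/$p_c$-open and sits on a short $H_{p_c}$-cycle with $u$ and $v$ connected disjointly to that cycle, and shows $\E[\#\text{triples}]=o(V^{5/3}/\chi(p_s))$; dividing by $|\C_r|^2=\Theta_\proba(V^{4/3})$ gives that the number of bad edges for random $(U,V)$ is $o_\proba(V^{1/3}/\chi(p_s))$ (in fact $o_\proba(1)$). This sidesteps the conditioning on $\gamma$ entirely. Second, the smallness comes from a trick you do not have: the short-cycle constraint (length $\le L=O(\chi(p_s)\log(V/\chi(p_s)^3))$) is exploited via $\proba_{p_c}(x\stackrel{\le L}{\lr}y)\le (p_c/p_1)^L\proba_{p_1}(x\lr y)$ with $p_1=p_c(1-L^{-1})$, so the three short-connection factors assemble into a triangle diagram at $p_1$, which by \eqref{eq:TriangleCondition} is $O(\alpha_m+L^3/V)=o(1)$. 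The remaining factors (sum over $u,v$ giving $\chi(p_c)^2$, the $(p_c-p_s)$ for the sprinkled edge, and the count of edges $(x,x')$) then produce the required $o(V^{5/3}/\chi(p_s))$. Your target (zero repeats whp) is actually a consequence of this computation, but your proposed route to it does not supply the needed smallness.
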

\begin{proof} Let $\gamma$ be a geodesic from $U$ to $V$ in $H_{p_c}$ and let $e_1,\ldots, e_k$ be its edges ordered from $U$ to $V$. In the entire proof, we use \cref{Interpole2} and work on the event $\Omega$ of probability $1-o(1)$ on which all the vertices of $\gamma$ lie in $\vs$; Since  the maximal diameter of a component in $H_{p_s}$ is at most $C\chi(p_s)\log(V/\chi(p_s)^3)$ with probability $1-o(1)$ by \eqref{eq:subcritMaxDiam}, we may further assume that this is the case on~$\Omega$.  

 On $\Omega$, $\gamma$ corresponds to a path $\Gamma'$ on $G_\comp$: processing the edges $e_i$ sequentially, if $e_i$ is an edge between two distinct connected components of $H_{p_s}$, then add the corresponding edge to the path in $G_\comp$. Note that $\Gamma'$ is not necessarily self-avoiding, so we loop erase it, that is, we erase every loop as it is formed when we traverse $\Gamma'$ in its order of construction. We let $\Gamma$ denote the loop erasure of $\Gamma'$.

Each $e_i$ that we added to the path in $G_\comp$ must correspond to an edge of the hypercube which is $p_s$-closed. Hence the length of $\Gamma$ is at most the number of $p_s$-closed edges in $\gamma$; as in the proof of \cref{Interpole2}, this occurs independently with probability $1-p_s/p_c = (1+o(1))\chi(p_s)^{-1}$ by \eqref{eq:subcritSucespt}. Hence, towards the upper bound on $L(\Gamma)$: if $k \leq \delta V^{1/3}$, then the upper bound is trivial, otherwise, by the aforementioned independence and Chebyshev's inequality we obtain that $L(\Gamma) \leq (1+o(1)) k/\chi(p_s)$, concluding the upper bound since $d_\cube(U,V) = O_\proba(V^{1/3})$. 

To obtain a lower bound on $L(\Gamma)$ matching the upper bound we shall subtract the number of edges $e_i$ of $\gamma$ that are $p_s$-closed but that should not be counted in the upper bound above. These edges are the $p_s$-closed edges in $\gamma$ that either (see Figure \ref{Fig:5.7}) (i) have their two endpoints in the same component of $H_{p_s}$ (and hence correspond to a self-loop in $G_\comp$) or (ii) which correspond to an edge of $\Gamma'$ that lies a cycle in $G_\comp$. An edge $(x,x')$ counted in (i) must have a $p_c$-open path $\pi$ of length at most $C\chi(p_s)\log(V/\chi(p_s)^3)$ connecting $x$ to $x'$ without using $(x,x')$, since this is the maximal diameter of a component in $H_{p_s}$ by assumption, and two disjoint paths connecting respectively $U$ and $V$ to $\pi$. 

 \begin{figure}[t] 
\centering
\includegraphics[scale=1.2]{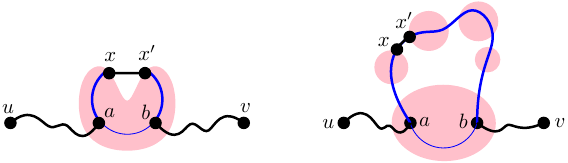}
\caption{\label{Fig:5.7}The edge $(x,x')$ is removed when loop erasing $\Gamma'$ in two ways: on the left case (i) and on the right case (ii). The path $\gamma$ is represented in thick black/blue line. Red blobs represent some connected components of $H_{p_s}$. Blue paths have length at most $L$. 
} 
\end{figure}

As for edges counted in (ii), by definition there must exist a connected component $A \in H_{p_s}$ such that $\gamma$ visits $A$ before and after going through the edge $(x,x')$. In other words there must exist a subpath $\tilde \gamma$ of $\gamma$ starting at $a\in A$ and ending at $b\in A$ that goes through the edge $(x,x')$. Since the maximal diameter of components in $H_{p_s}$ is at most $C\chi(p_s)\log(V/\chi(p_s)^3)$, there is a $p_s$-open path between $a$ and $b$ inside $C$. Also, since $\gamma$ is a geodesic, $\gamma'\subset \gamma$ also is, and so $\gamma'$ have length at most $C\chi(p_s)\log(V/\chi(p_s)^3)$. We conclude that $(x,x')$ is inside of a cycle of length at most $2C\chi(p_s)\log(V/\chi(p_s)^3)$. Additionally, $U$ and $V$ connect to this cycle by two disjoint paths in $H_{p_c}$.

We deduce from this discussion that the desired lower bound on $L(\Gamma)$ will follow once we have bound the number of such edges by $o_\proba(V^{1/3}/\chi(p_s))$. For this denote by $N$ the number of triplets $u,v, (x,x')$ of two vertices $u,v$ and an edge $(x,x')$ such that there exists vertices $a,b$ so that the following events occur disjointly:
\begin{itemize}
  \item The edge $(x,x')$ is $p_c$-open but $p_s$-closed, 
  \item $\{x \stackrel{L}{\lr} a\} \circ \{a \stackrel{L}{\lr} b\} \circ \{b \stackrel{L}{\lr} x'\}$ in $H_{p_c}$,
  \item $\{a \lr u\}\circ \{b \lr v\}$ in $H_{p_c}$,
\end{itemize}
where $L=2C\chi(p_s)\log(V/\chi(p_s)^3)=o(V^{1/3})$. It suffices to prove that $\E N = o(V^{5/3}/\chi(p_s))$: indeed, we have that $|\C_r| = \Omega_\proba(V^{2/3})$ by \cref{main_thm}, so if there were $\Omega(V^{4/3})$ pairs of vertices of vertices $u,v$ in $\C_r$ such that the number of such edges $(x,x')$ is $\Omega(V^{1/3}/\chi(p_s))$, we would get a contradiction to the $o(V^{5/3}/\chi(p_s))$ bound. We bound $\E N$ using the BKR inequality by
\be\label{eq:LowerBoundGamma} \sum_{u,v,a,b,(x,x')} (p_c-p_s) \proba_{p_c}(x \stackrel{L}{\lr} a) \proba_{p_c}(a \stackrel{L}{\lr} b) \proba_{p_c}(b \stackrel{L}{\lr} x') \proba_{p_c}(a \lr u) \proba_{p_c}(b \lr v) \, .\ee
We first sum over $u$ and $v$ the last two terms and get a contribution of $\chi(p_c)^2=O(V^{2/3})$. Next for the sum over $a,b$ over the three terms, considering $(x,x')$ as fixed, we note that for any two vertices $x,y$ and integer $L$, and any $0\leq p_1\leq p_2 \leq 1$ we have $\proba_{p_2}(x \stackrel{L}{\lr} y) \leq (p_2/p_1)^L \proba_{p_1}(x \stackrel{L}{\lr} y)$. Indeed, using our simultaneous coupling, conditioned on the existence of a $p_2$-open path of length at most $L$ between $x$ and $y$, choose the lexicographical first such path, the probability that it remains $p_1$-open is at least $(p_1/p_2)^L$; hence $\proba_{p_2}(x \stackrel{L}{\lr} y) (p_1/p_2)^L \leq \proba_{p_1}(x \stackrel{L}{\lr} y)$. We may thus take $p_2=p_c$ and $p_1=p_c(1-L^{-1})$ and upper bound \eqref{eq:LowerBoundGamma} by
$$ CV^{2/3} (p_c-p_s)  \sum_{(x,x')} \sum_{a,b} \proba_{p_1}(x \lr a) \proba_{p_1}(a \lr b) \proba_{p_1}(b \lr x') \,,$$
for some constant $C$ (we have $(p_2/p_1)^{3L}\le e^3$).
By \eqref{eq:subcritSucespt} we have that $\chi(p_1)=(1+o(1))L$, and so \eqref{eq:TriangleCondition} implies that the sum over $a,b$ is at most $O(\alpha_m+L^3/V)=O(\alpha_m \log \alpha_m^{-1})$. We sum this over $(x,x')$ getting a factor of $Vm$, the factor $p_c-p_s$ is as usual at most $Cm^{-1} V^{-1/3}\alpha_m^{-1/3}$. Putting all these together gives a bound of $O(V^{4/3} \alpha_m^{2/3}\log \alpha_m^{-1})$ which is indeed $o(V^{5/3}/\chi(p_s))$ since $\chi(p_s) = O(V^{1/3}\alpha_m^{1/3})$, concluding the proof. 
\end{proof}

We note that the last lemma already gives the required upper bound on $d_\comp(U,V)$ in \cref{CompareCompHyp}, the main obstacle is that the path $\Gamma$ constructed in \cref{Interpole1} is not necessarily the shortest path in $G_\comp$. However, we will soon prove (\cref{lem:GCompHaveNoSmallCycles}) that large components of $G_\comp$ do not have short cycles, so if somehow $d_\comp(U,V)$ is small, then in fact $\Gamma$ \emph{is} a shortest path between $U$ and $V$ in $G_\comp$.

A technical difficulty which will unfortunately arise shortly forces us to consider now the small components of $H_{p_s}$ as well as the larger ones (in particular, the proof of \cref{lem:DCompTightness} fails unless we work with $G_\rcomp$, see definition below). The reason is that we will need to make some rough comparisons between $d_\cube$ and $d_\comp$ that are valid for \emph{all} vertices in $\vs$ (not just random ones) --- this is the contents of \cref{TransformTight} and \cref{lem:DCompTightness}. When the vertices are not random, it may well be that a shortest path between them in $H_{p_c}$ does \emph{not} remain in $\vs$ and traverses through the small components of $H_{p_s}$. 

To overcome this we write $G_\rcomp$ for the graph whose vertex set consist of \emph{all} components of $H_{p_s}$ such that two vertices are connected if there exists an hypercube edge $e$ connecting the two corresponding components such that $U_e \in [p_s,p_c]$. We call $G_\rcomp$ the \textbf{full component graph}. Of course $G_\comp$ is a subgraph of $G_\rcomp$ so $d_\rcomp(u,v) \leq d_\comp(u,v)$ for any two vertices $u,v\in \vs$. We remark that we cannot pull the proofs of earlier sections (particularly that of \cref{sec:CompGraph}) with $G_\rcomp$; this occurs in various places, perhaps the most significant one is that the statement in \cref{prop:Delta} would fail if we did not remove the small components of $H_{p_s}$.

\begin{lemma} \label{TransformTight} For every $\e, \alpha>0$ there exists $\beta >0$ such that with probability at least $1-\e$, 
$$ \forall u,v\in \V_\star \quad d_\rcomp(u,v) \leq \beta V^{1/3}/\chi(p_s) \implies d_\cube(u,v) \leq \alpha V^{1/3} \, .$$
\end{lemma}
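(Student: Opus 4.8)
The plan is to bound the probability of the complement event $\Omega_{\mathrm{bad}}$, that there exist $u,v\in\V_\star$ with $d_\rcomp(u,v)\le k:=\lfloor\beta V^{1/3}/\chi(p_s)\rfloor$ but $d_\cube(u,v)>\alpha V^{1/3}$. Using \cref{Pro:PositionPc} I assume $p'_c(\lambda)=p_c(\lambda)$, so $G_\rcomp\supseteq G_\comp$ and $H_{p_c}$ are all built from the same sprinkling $\{U_e\}$ applied to $H_{p_s}$. I first restrict to the high-probability event on which: every $H_{p_s}$-component has graph diameter at most $L:=C\chi(p_s)\log(V/\chi(p_s)^3)$ and size at most $N:=C\chi(p_s)^2\log(V/\chi(p_s)^3)$ (by \eqref{eq:subcritMaxDiam} and \eqref{eq:subcritMaxComp}), and there is no ``long and thin'' $H_{p_s}$-cluster at any dyadic scale (an application of \eqref{eq:ExistsLongThin} with $M=N$, so that a cluster of diameter $\ge r$ has size $\gtrsim r^2/\log(V/r^2)$). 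Note $L=o(V^{1/3})$ and $k=o(V^{1/3})$. If $\Omega_{\mathrm{bad}}$ occurs for some $u,v$, pick a $G_\rcomp$-geodesic $\C_{p_s}(u)=A_0,A_1,\dots,A_j=\C_{p_s}(v)$ with $j\le k$ and sprinkle-open hypercube edges $e_i$ joining $A_i$ to $A_{i+1}$; these spell out an $H_{p_c}$-path of length $j+\sum_{i=0}^{j}d_{A_i}(\mathrm{in}_i,\mathrm{out}_i)$, where $d_{A_i}$ is the $H_{p_s}$-distance inside $A_i$ between the endpoints of $e_{i-1},e_i$ (and of $u$, resp.\ $v$, for $i=0,j$). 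Since each such intra-cluster distance is $\le L$ and this path has length $\ge d_\cube(u,v)>\alpha V^{1/3}$, and since $j\le k$, $L=o(V^{1/3})$, we conclude $\sum_{i=1}^{j-1}d_{A_i}(b_i,a_i)>\alpha V^{1/3}/2$ for $m$ large (in particular $j\ge 2$), where I write $b_i$ (entry) and $a_i$ (exit) for the endpoints of $e_{i-1},e_i$ in $A_i$.

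\textbf{The first-moment bound.} It thus suffices to bound the expected number of ``bad chains'': sequences of distinct $H_{p_s}$-clusters $A_0,\dots,A_j$ with $|A_0|,|A_j|\ge M_s$, vertices $b_i,a_i\in A_i$ and hypercube edges $a_i\sim b_{i+1}$ that are sprinkle-open, with $\sum_{i=1}^{j-1}d_{A_i}(b_i,a_i)>\alpha V^{1/3}/2$. Conditioning on $H_{p_s}$, the bridging edges are sprinkled independently, each with probability $s:=(p_c-p_s)/(1-p_s)$, so the BK/BKR inequality lets me factorize the expected count over the chain (the clusters along the chain are distinct, so the relevant events occur on disjoint edge-sets). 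Three ingredients make this converge. First, $s m\,\chi(p_s)\to 1$ (from the definition of $p_s$ in \eqref{def:ps}, \eqref{eq:subcritSucespt}, and $p_c(m-1)=1+O(\alpha_m)$ via \eqref{pc_estimate}/\eqref{eq:PercolationNBWRW}), and the convergence is fast enough that $k\,(s m\,\chi(p_s)-1)\to 0$; hence extending a chain through one more sprinkle edge into a fresh cluster carries expected multiplicative weight $1+o(1/k)$, and a whole chain of length $\le k$ costs only a $1+o(1)$ factor instead of proliferating --- this is the ``critical branching'' of $G_\rcomp$. Second, forcing $|A_0|,|A_j|\ge M_s$ contributes a factor $O(M_s^{-1/2})$ at each end by \eqref{eq:ClusterTail}. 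Third, the diameter budget is handled by $\mathbf{1}\{\sum_{i} d_{A_i}(b_i,a_i)>\alpha V^{1/3}/2\}\le\tfrac{2}{\alpha V^{1/3}}\sum_i d_{A_i}(b_i,a_i)$, which replaces the factor at one position of the chain by a diameter-weighted version; using $\E_{p_s}|\C|^2\le\chi(p_s)^3$ (\cref{lem:sigma3FirstMom}), the intrinsic estimates \eqref{eq:IntVolume}--\eqref{eq:IntBoundary}, and the exponential diameter tail coming from \eqref{eq:ExistsLongThin} on the no-long-thin event, this weighted factor is $O(\chi(p_s)^2)$ up to logarithms. Combining these with the triangle condition \eqref{eq:TriangleCondition} and the uniform connection bound \eqref{eq:UniformConnection} where needed, and summing over $j\le k$ and over the weighted position, one gets an expected number of bad chains bounded by a fixed power of $\beta$ times a bounded quantity; taking $\beta$ small enough in terms of $\alpha$ and $\e$ makes it $\le\e/2$, and Markov's inequality finishes.

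\textbf{Main obstacle.} The crux is exactly the third ingredient: ruling out that an \emph{adversarially chosen} $G_\rcomp$-geodesic of length $\le\beta V^{1/3}/\chi(p_s)$ threads through enough large-diameter $H_{p_s}$-clusters that its $H_{p_c}$-realization exceeds $\alpha V^{1/3}$. The naïve bound $d_\cube\le(j+1)(L+1)$ is useless because $L/\chi(p_s)\asymp\log(V/\chi(p_s)^3)\to\infty$, so no constant $\beta$ works from it; what rescues the estimate is the tension between the rarity of large-diameter subcritical clusters (the tails of \eqref{eq:IntBoundary} and \eqref{eq:ExistsLongThin}) and the criticality of $G_\rcomp$'s branching (the identity $sm\chi(p_s)\to 1$), which together prevent a short path from accumulating too much intra-cluster distance. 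Making this trade-off quantitative through the BKR/first-moment computation above --- in particular verifying that the powers of $V$ cancel and that the residual factors are controlled once $\beta$ is small --- is the technical heart of the proof; the remaining points (that $u,v\in\V_\star$ genuinely pins the endpoints into big clusters, that the good events have probability $\ge1-\e/2$, and the bookkeeping in $V$ and $\alpha_m$) are routine given \cref{sec:prelim_percolation}.
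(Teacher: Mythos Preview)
Your first-moment computation does not close. Carrying through your three ingredients --- endpoint constraints $|A_0|,|A_j|\ge M_s$ worth $O(M_s^{-1/2})$ each, branching factor $(sm\chi(p_s))^j=O(1)$, and the Markov bound replacing one factor $\chi(p_s)$ by the diameter-weighted $D\asymp\chi(p_s)^2$ --- the expected number of bad chains of length $j$ comes out as
\[
\frac{j-1}{\alpha V^{1/3}}\cdot V M_s^{-1}\cdot (sm)^j \chi(p_s)^{j-2}\cdot D \;\asymp\; \frac{j}{\alpha}\,\alpha_m^{-4}\,,
\]
and summing over $2\le j\le k\asymp\beta\alpha_m^{-1/3}$ yields $\asymp(\beta^2/\alpha)\,\alpha_m^{-14/3}$. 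The $V$-powers indeed cancel, but the $\alpha_m$-powers do not: your ``bounded quantity'' is $\alpha_m^{-14/3}\to\infty$, and no choice of $\beta$ repairs this. The failure is structural: for a chain of length $j\sim k$, the \emph{typical} value of $\sum_i d_{A_i}(b_i,a_i)$ is already $\asymp j\chi(p_s)\asymp\beta V^{1/3}$, so the constraint $>\alpha V^{1/3}/2$ is only a constant-multiple-of-the-mean event, far too weak against the $\asymp\alpha_m^{-13/3}$ chains you must union-bound over. (This is precisely the asymmetry with \cref{lem:DCompTightness}, where a first-moment argument \emph{does} work because there the $H_{p_c}$-path is short, genuinely constraining the intra-cluster lengths.)

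The paper avoids any union bound via a short sprinkling argument. Pick $\lambda'<\lambda$ so that every component of $H_{p_c(\lambda')}$ has diameter $<\alpha V^{1/3}$ with probability $\ge 1-\e/2$, and set $\beta=1/(3(\lambda-\lambda'))$. If a bad pair $u,v$ exists, the $G_\rcomp$-geodesic induces an $H_{p_c(\lambda)}$-path $\gamma$ from $u$ to $v$ with at most $\beta V^{1/3}/\chi(p_s)$ sprinkle edges; conditionally on $H_{p_s}$ and $H_{p_c(\lambda)}$, each such edge is $p_c(\lambda')$-closed with probability $\sim(\lambda-\lambda')\alpha_m^{1/3}$ (by \cref{cor:Window}), so $\gamma$ survives at level $p_c(\lambda')$ with probability $>1/2$ by the choice of $\beta$. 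On that event $u\lr v$ in $H_{p_c(\lambda')}$, and monotonicity of graph distance in $p$ gives $d_{H_{p_c(\lambda')}}(u,v)\ge d_\cube(u,v)>\alpha V^{1/3}$, contradicting the diameter bound. Hence $\proba(\exists\text{ bad pair})\le 2\,\proba(\diam H_{p_c(\lambda')}\ge\alpha V^{1/3})\le\e$.
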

\begin{proof} 
First by \eqref{eq:subcritMaxDiam}, we may consider  $\lambda'<\lambda$ small enough such that the diameter of $H_{p_c(\lambda')}$ is smaller than $\alpha V^{1/3}$ with probability at least  $1-\e/2$, and then set $\beta=1/(3(\lambda-\lambda'))$. Assume now that there exist vertices $u,v\in \V_\star$ such that $\alpha V^{1/3} \leq d_\cube(u,v)$ and  $d_\rcomp(u,v) \leq \beta V^{1/3}/\chi(p_s)$. Then there is a shortest path $\gamma$ from $u$ to $v$ in $H_{p_c(\lambda)}$ of length at least $\alpha V^{1/3}$ with at most $\beta V^{1/3}/\chi(p_s)$ edges closed in $H_{p_s}$. 
Conditioned on $H_{p_c}$ and $H_{p_s}$, the probability that each $p_c$-open but $p_s$-closed edge is also closed in $H_{p(\lambda')}$ is
\[ \frac{p_c(\lambda)-p_c(\lambda')}{p_c(\lambda)-p_s}= (1+o(1)) (\lambda-\lambda')\alpha_m^{1/3} \, , \]
using Corollary \ref{cor:Window}, our definition of $p_s$ in \eqref{def:ps}, and the fact that $p_c(\lambda)\sim 1/m$. 
 So the conditional probability that $\gamma$ stays open in $H_{p(\lambda')}$ is at least 
 \[ 1-\beta V^{1/3} \chi(p_s)^{-1} (\lambda-\lambda')\alpha_m^{1/3}(1+o(1))= 1-(1+o(1))\beta(\lambda-\lambda')>1/2 \, , \]
 by our choice of $\beta$. If this occurs, then clearly $u$ and $v$ remain connected in $H_{p_c(\lambda')}$ and $\gamma$ is also a shortest path in $H_{p_c(\lambda')}$ between $u$ and $v$ implying that the diameter of $H_{p_c(\lambda')}$ is at least $\alpha V^{1/3}$. Hence
  \[ \proba\big(\exists u,v \in \vs: \alpha V^{1/3} \leq d_\cube(u,v) \text{ and }d_\comp(u,v) \leq \beta V^{1/3}/\chi(p_s)\big) \leq 2 \proba(\diam(H_{p_c(\lambda')})\geq \alpha V^{1/3})\leq \e \, ,\] 
  by our choice of $\lambda'$. This concludes the proof.
\end{proof}

We will also need a converse of the last lemma, that is, we would like to show that a small $d_\cube$ distance implies small $d_\rcomp$ distance for \emph{all} pairs $u,v\in \vs$. It is the proof of this lemma that would be invalid if we were to consider $d_\comp$ instead of $d_\rcomp$.

\begin{lemma} \label{lem:DCompTightness} For every $\delta_1>0$ there exists $\delta_2 >0$ such that with probability $1-o(1)$, 
$$ \forall u,v\in \V_\star \quad d_\cube(u,v) \leq \delta_2 V^{1/3} \implies d_\rcomp(u,v) \leq \delta_1 V^{1/3}/\chi(p_s) \, .$$
\end{lemma}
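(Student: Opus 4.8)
The plan is to prove the contrapositive in a union-bound form: we show that with probability $1-o(1)$, \emph{every} pair $u,v\in\vs$ with $d_\cube(u,v)\le\delta_2 V^{1/3}$ satisfies $d_\rcomp(u,v)\le\delta_1 V^{1/3}/\chi(p_s)$. Fix $u,v\in\vs$ with $d_\cube(u,v)\le\delta_2 V^{1/3}$ and let $\gamma$ be a $p_c$-geodesic between them, so $\gamma$ has length $k\le\delta_2 V^{1/3}$. Walking along $\gamma$ from $u$ to $v$, we obtain a walk $\Gamma'$ in $G_\rcomp$ by recording, in order, each hypercube edge of $\gamma$ whose two endpoints lie in distinct $H_{p_s}$-components (such an edge is necessarily $p_s$-closed). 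Loop-erasing $\Gamma'$ gives a self-avoiding path $\Gamma$ in $G_\rcomp$ between $\C_{p_s}(u)$ and $\C_{p_s}(v)$, so $d_\rcomp(u,v)\le L(\Gamma)\le\#\{p_s\text{-closed edges of }\gamma\}$. Conditioned on $\gamma$, each of its $k$ edges is independently $p_s$-closed with probability $1-p_s/p_c=(1+o(1))V^{-1/3}\alpha_m^{-1/3}$ (using \eqref{def:ps} and $p_c\sim 1/m$), so the expected number of $p_s$-closed edges of $\gamma$ is at most $(1+o(1))k V^{-1/3}\alpha_m^{-1/3}\le 2\delta_2\alpha_m^{-1/3}$. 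Since $\chi(p_s)=\Theta(V^{1/3}\alpha_m^{1/3})$, we have $\delta_1 V^{1/3}/\chi(p_s)=\Theta(\delta_1\alpha_m^{-1/3})$, so the target bound is of the same order as the expectation up to the constant $\delta_1/\delta_2$; choosing $\delta_2$ small relative to $\delta_1$ makes the expectation a small fraction of the target.

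The main obstacle is the \emph{uniformity over all pairs} $u,v\in\vs$: we cannot fix $\gamma$ and apply Chebyshev directly, because there are exponentially many pairs and hence exponentially many candidate geodesics. This is exactly why the lemma must be phrased with $G_\rcomp$ rather than $G_\comp$ --- a $p_c$-geodesic between two arbitrary $\vs$-vertices may pass through small $H_{p_s}$-components, which have no vertex in $G_\comp$, so the walk $\Gamma'$ above only makes sense in the full component graph. To get uniformity I would bound the relevant count by a first-moment argument: let $N$ be the number of triples $(u,v,e)$ where $e=(x,x')$ is an edge that is $p_c$-open but $p_s$-closed, and $u,v\in\vs$ are such that there is a $p_c$-open path of length $\le \delta_2 V^{1/3}$ from $u$ to $v$ through $e$ that realizes $e$ as a ``genuine'' $G_\rcomp$-step (endpoints in distinct components), \emph{together with} the bad event that too many such edges appear. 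More precisely, I would argue that if the conclusion fails for some pair $u,v$, then $\gamma$ has more than $\delta_1 V^{1/3}/\chi(p_s)$ $p_s$-closed edges; decomposing $\gamma$ at such an edge and using the BKR inequality shows this forces the existence of an $e$ that is $p_c$-open, $p_s$-closed, and lies on a $p_c$-open self-avoiding path of length $\le\delta_2 V^{1/3}$ joining two vertices of $\C_r$-size components, and then counting such configurations via $\chi(p_c)=O(V^{1/3})$ and the triangle condition \eqref{eq:TriangleCondition} (as in the proofs of \cref{Interpole1} and \cref{lem:BadPair}) gives $\E[\text{number of bad pairs}]=o(V^{4/3})$.

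Concretely, the first-moment computation would go as follows: the expected number of pairs $u,v\in V(H)$ connected in $H_{p_c}$ by a geodesic of length $\le\delta_2 V^{1/3}$ containing more than $\delta_1 V^{1/3}/\chi(p_s)$ $p_s$-closed edges is, by decomposing at the block of $p_s$-closed edges and applying the BKR inequality repeatedly, bounded by something like
\[
\binom{\lceil \delta_1 V^{1/3}/\chi(p_s)\rceil}{1}^{-1}\cdot\bigg(\frac{\delta_2 V^{1/3}\cdot m\cdot(p_c-p_s)}{\chi(p_s)}\bigg)\cdot V\cdot\chi(p_c)^2\cdot\big(\text{triangle factor}\big)\,,
\]
and since $p_c-p_s=\Theta(m^{-1}V^{-1/3}\alpha_m^{-1/3})$, $\chi(p_s)=\Theta(V^{1/3}\alpha_m^{1/3})$, $\chi(p_c)=\Theta(V^{1/3})$, this evaluates (after choosing $\delta_2$ small enough relative to $\delta_1$ and using $\delta_2/\delta_1$ as the saving) to $o(V^{4/3})$ — a heuristic to be made precise along the lines of \cref{Interpole1}. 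Combining with $|\C_r|=\Omega_\proba(V^{2/3})$ from \cref{main_thm} (or simply the crude bound that a failing pair with $u\in\C_i$ would contribute $|\C_i|\ge M_s$ to the count) rules out the bad event with probability $1-o(1)$. The delicate point to get right is the bookkeeping in the BKR decomposition — ensuring the intermediate connectivity events are genuinely disjoint so that \eqref{eq:TriangleCondition} can be applied, and handling the boundary edges near $u$ and $v$ separately, exactly as in \cref{lem:DeltaABSquared} and \cref{Interpole1}. I expect this bookkeeping, rather than any single estimate, to be the real work.
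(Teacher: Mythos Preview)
Your opening observations are correct: the geodesic $\gamma$ in $H_{p_c}$ yields $d_\rcomp(u,v)\le\#\{p_s\text{-closed edges of }\gamma\}$, and conditionally on $H_{p_c}$ each edge of $\gamma$ is $p_s$-closed independently with probability $1-p_s/p_c$. You also correctly pinpoint why $G_\rcomp$ (not $G_\comp$) is needed. The gap is in the first-moment step.

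Your target bound of $o(V^{4/3})$ on the expected number of failing pairs is not strong enough. The lemma asks for uniformity over \emph{all} $u,v\in\vs$, so invoking $|\C_r|=\Omega_\proba(V^{2/3})$ is irrelevant; the only amplification available is that $u,v$ each lie in an $H_{p_s}$-component of size $\ge M_s$, yielding $M_s^2=V^{4/3}\alpha_m^8$ pairs with the same structure. Hence the expected count must be $o(V^{4/3}\alpha_m^8)$, not merely $o(V^{4/3})$, and this requires a saving that is \emph{super-polynomial} in $\alpha_m^{-1}$. Your proposed estimate, built from $\chi(p_c)^2$ and a triangle-condition factor, throws away the short-path constraint $|\gamma|\le\delta_2 V^{1/3}$ and can at best deliver polynomial savings; as written it does not even reach $o(V^{4/3})$.

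The paper extracts an exponential saving precisely from the length constraint. A failing pair forces a $p_c$-path of total length $\le r=2\delta_2 V^{1/3}$ that decomposes into $K+1$ segments inside $H_{p_s}$ of lengths $\ell_0,\dots,\ell_K$ (with $\sum_j\ell_j\le r$) separated by $K\ge\delta_1 V^{1/3}/\chi(p_s)$ sprinkled edges. After a BKR decomposition, each segment contributes a factor $C\ell_j$ via the \emph{intrinsic ball volume} bound $\E_{p_s}|B(x,\ell)|\le C\ell$ from \eqref{eq:IntVolume} (this is where the length constraint enters --- using $\chi$ here loses everything), each sprinkled edge contributes $m(p_c-p_s)\sim V^{-1/3}\alpha_m^{-1/3}$, and one vertex sum gives $V$. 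A dyadic enumeration of the profiles $(\ell_j)$ costs only $C^K$, and then AM--GM gives $\prod_j C\ell_j\le (Cr/K)^{K+1}$. Since $r/K\le (2\delta_2/\delta_1)\chi(p_s)$ and $\chi(p_s)V^{-1/3}\alpha_m^{-1/3}\to 1$, the whole count is at most $V\chi(p_s)\,(C'\delta_2/\delta_1)^{K+1}$; choosing $\delta_2$ so that $C'\delta_2/\delta_1\le 1/2$ and summing over $K\ge\delta_1\alpha_m^{-1/3}$ yields $V^{4/3}2^{-\delta_1\alpha_m^{-1/3}}$, which beats $M_s^2$. This AM--GM mechanism is the missing idea in your sketch.
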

\begin{proof} Fix an arbitrary $\delta_1>0$ and let $\delta_2=\delta_2(\delta_1)>0$ be a small number that will be chosen at the end of the proof; we also set $r=2\delta_2 V^{1/3}$. 

If there exist vertices $u,v\in \vs$ so that $d_\cube(u,v) \leq r/2$ and $d_\rcomp(u,v) \geq \delta_1 V^{1/3}/\chi(p_s)$, then the shortest path between $u$ and $v$ in $H_{p_c}$ must induce a path of length at least $\delta_1 V^{1/3}/\chi(p_s)$ in $G_\rcomp$. This implies that there exists $K \geq \delta_1 V^{1/3}/\chi(p_s)$ and a finite sequence of vertices $A_0, A_1\ldots, A_K$ of $G_\rcomp$  such that $u\in A_0$ and $v\in A_K$ and a shortest path in $H_{p_c}$ from $u$ to $v$ of length at most $r/2$ that goes through the vertices of $A_0,\ldots, A_K$ successively without visiting any other vertices of $G_\rcomp$. Note that we do not require that the $A_i$'s be distinct since they do not necessarily form a shortest path in $G_\rcomp$. 

Since $u,v\in \vs$ it must be the case that $A_0$ and $A_K$ are both of size at least at least $M_s$, and by \eqref{eq:subcritMaxDiam} their diameter is $o(V^{1/3})$ with probability $1-o(1)$. Hence, any pair of vertices from $A_0$ and $A_K$ can take the role of $u$ and $v$ and that increases the length of this shortest path to at most $r/2+o(V^{1/3})\leq r$. We deduce that there are at least $M_s^2$ pairs of vertices $u_0,v_K$ such that there exist vertices $v_0,\ldots, v_{K-1}$ and $u_1,\ldots, u_K$ such that $v_j$ and $u_{j+1}$ are neighbors in $H$ for all $0\leq j\leq K-1$, as well as lengths $\{\ell_j\}^K_{j=0}$ of total sum at most $\sum_{0\le j \leq K} \ell_j \leq r$, so that the next $2K+1$ events occur disjointly (see Figure \ref{Fig:5.9}):
\begin{enumerate}
  \item For all $j=0,\ldots K$, $u_j$ is connected to $v_j$ in $H_{p_s}$ by a path of length at most $\ell_j$, 
  \item For all $j=0,\ldots K-1$ the edges $(v_j,u_{j+1})$ are $p_s$-closed but $p_c$-open.
\end{enumerate}
\begin{figure}[t] 
\centering
\includegraphics[scale=0.8]{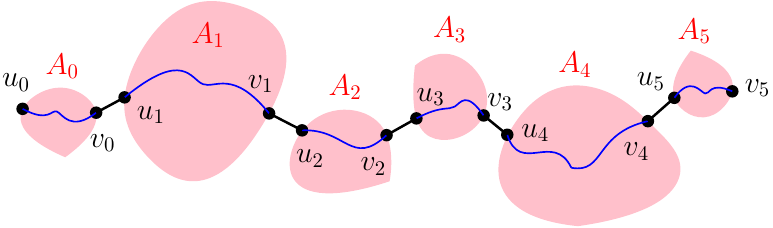}
\caption{\label{Fig:5.9} A path in $H_{p_c}$ with corresponding length $K=5$ in $G_{\tilde \comp}$ is represented. The red blobs represent connected components of $H_{p_s}$. Paths in $H_{p_s}$ are drawn in blue and edges of $H_{p_c}\backslash H_{p_s}$ are drawn in black.}
\end{figure}
We will bound the expected number of such pairs $u_0,v_K$. First we need to consider all the possible collections of lengths $\ell_j$'s but in order not to overcount too much, we enumerate them in logarithmic scales starting from $r/K$. In particular, let $L:\{0,\ldots,K\}\to \mathbb{N}$ be defined by
$$ L(j) = \begin{cases}
      1, & \text{if $\ell_j \leq {r\over K}$}\\
            i, & \text{if $\ell_j \in \Big [ {2^{i-1}r \over K}, {2^{i}r \over K}$\Big )}\,.
     \end{cases}
$$
The range of $L$ is in fact contained $\{1,\ldots, \lfloor \log_2 K\rfloor +1\}$ since $\ell_j \leq r$ for all $j$. Furthermore, since the total length $\sum_{0\le j \leq K} \ell_j$ is at most $r$ the number of $j$'s such that $L(j)=i$ is at most ${2K \over 2^i}$. Hence the number of such possible $L$'s is at most 
\begin{equation}\label{eq:number_maps_L}
\prod_{i=0}^{\lfloor \log_2 K\rfloor +1} {K \choose \lfloor 2K/2^i\rfloor} 
\leq \prod_{i=0}^{\lfloor \log_2 K \rfloor +1} (e2^i)^{\lceil 2K/2^i\rceil } \leq C^K \, ,  
\end{equation}
where $C>0$ is some universal constant (in the first inequality we used that ${a \choose b} \leq (ea/b)^b$). 

We now bound the expected number of pairs $u_0, v_K$ using the BKR inequality by
$$  \sum_{\substack{u_0,v_0\,\ldots, u_{K},v_K\\ (v_j,u_{j+1})\in E(H)}} \sum_L (p_c-p_s)^{K} \prod_{j=0}^{K} \proba_{p_s}\big(u_j \stackrel{2^{L(j)}r/K}{\lr} v_{j}\big) \, .$$
Given $L$, we sum first over $u_0$ using \eqref{eq:IntVolume} to obtain a factor of $C 2^{L(1)}r/K$. We then sum over $v_0$ giving a factor of $m$. We continue this way and get for each $j\in \{0,1,\dots, K\}$ a factor of $C 2^{L(j)}r/K$ and $K$ factors of $m$ in total. The sum over the last vertex $v_K$ gives a factor of $V$. We obtain an upper bound on the desired expectation of
$$ V \cdot (p_c m V^{-1/3} \alpha_m^{-1/3})^{K} \sum_L \prod_{j=0}^{K} C2^{L(j)}r/K \, .$$
Since $2^{L(j)-1}r/K \leq \ell_j$, we have that $\sum_j 2^{L(j)}r/K \leq 2r$. Hence by the arithmetic--geometric mean inequality we bound the product above by $(2Cr/K)^{K+1}$. Since the number of maps $L$ is at most $C^K$ by \eqref{eq:number_maps_L}, the expectation is at most
$$  V \cdot (p_c m V^{-1/3} \alpha_m^{-1/3})^{K} (2C^2 r/K)^{K+1} \, .$$
We now plug in the values of $r$ and the lower bound $K \geq \delta_1 V^{1/3}/\chi(p_s)$ and recall that by \eqref{eq:subcritSucespt} we have $\chi(p_s) \leq 2V^{1/3} \alpha_m^{1/3}$ and that $m p_c\leq 2$, so the above is bounded by 
$$ V \chi(p_s) (16 C^2\delta_2/\delta_1)^{K+1} \, .$$
We now choose $\delta_2>0$ small enough so that $16C^2\delta_2/\delta_1\leq 1/2$, sum over $K \geq \delta_1 V^{1/3}/{\chi(p_s)}$ (the first term is dominant) and thus bound this expectation above by $V^{4/3} 2^{-\delta_1 \alpha_m^{-1/3}}$. As mentioned earlier, the event in the statement of the lemma implies that there are at least $M_s^2$ such pairs $u_0,v_K$, hence by Markov's inequality the probability of the event is at most 
$$M_s^{-2} V^{4/3} 2^{-\delta_1 \alpha_m^{-1/3}} = \alpha_m^{-8} 2^{-\delta_1 \alpha_m^{-1/3}} = o(1) \, ,$$
as required.
\end{proof}

Our last preparatory lemma shows that $G_\rcomp$ has no short cycles in large components (this is clearly false for $H_{p_c}$ since there will be many cycles of length $4$ in large components).

\begin{lemma} \label{lem:GCompHaveNoSmallCycles}  For any $\e>0$ and $\tau>0$ there exists $\alpha=\alpha(\e,\tau)>0$ such that as long as $m$ is large enough, with probability at least $1-\e$, no connected components of $G_{\rcomp}$ of weight at least $\tau V^{2/3}$ contains a cycle of length less than $\alpha V^{1/3}/\chi(p_s)$.
\end{lemma}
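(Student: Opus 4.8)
The idea is to bound the expected number of cycles of a given (small) length $k$ in $G_\rcomp$ that are contained in a large component, and show this expectation is $o(1)$ once we sum over $k$ up to $\alpha V^{1/3}/\chi(p_s)$, provided $\alpha$ is small enough depending on $\e,\tau$. A cycle of length $k$ in $G_\rcomp$ through components $A_0,A_1,\dots,A_{k-1},A_0$ of $H_{p_s}$ corresponds to the existence of hypercube edges $(v_j,u_{j+1})$, $j=0,\dots,k-1$ (indices mod $k$), that are $p_s$-closed but $p_c$-open, together with $p_s$-open paths inside the components $A_j$ connecting $u_j$ to $v_j$. To realize that such a cycle lies inside a component of weight at least $\tau V^{2/3}$, we will additionally require that one of the vertices of the cycle, say $u_0$, is connected in $H_{p_c}$ to a vertex $w$ that is itself the endpoint of many connections --- more precisely we will pin down two extra vertices $w,w'$ with $u_0 \lr w$ in $H_{p_c}$ and the cycle contributing at least $\tau V^{2/3}$ vertices. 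Actually the cleanest device, mirroring the proof of \cref{lem:DCompTightness}, is: if a large component of $G_\rcomp$ contains a short cycle, then there are at least $M_s^2$ (or $(\tau V^{2/3})^2$) pairs of vertices $u,v$ connected in $H_{p_c}$ such that the path between them closes a cycle in $G_\rcomp$ of length less than $\alpha V^{1/3}/\chi(p_s)$; we bound the expected number of such pairs.

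\textbf{Key steps.} First I would set up the combinatorial encoding exactly as in \cref{lem:DCompTightness}: a cycle of length $k$ in $G_\rcomp$ through vertices $u_0,v_0,\dots,u_{k-1},v_{k-1}$ (with $(v_j,u_{j+1})$ hypercube edges, indices mod $k$), $p_s$-open connections $u_j \stackrel{\le \ell_j}{\lr} v_j$ inside components, and the $k$ sprinkled edges $(v_j,u_{j+1})$ being $p_c$-open but $p_s$-closed. I would quantify over length profiles $(\ell_j)_{j<k}$ in logarithmic scales starting from some base (here there is no a priori total-length bound like $r$, so I would instead note that a short cycle in $G_\rcomp$ that sits inside a component realizing a geodesic between two fixed far-apart vertices has its component diameters bounded by $C\chi(p_s)\log(V/\chi(p_s)^3)$ via \eqref{eq:subcritMaxDiam}, so each $\ell_j = O(\chi(p_s)\log(\cdot))$, giving a bound $r := Ck\chi(p_s)\log(V/\chi(p_s)^3)$ on the total length). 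Then I apply the BKR inequality, summing the factors exactly as in \cref{lem:DCompTightness}: each sprinkled edge contributes $(p_c-p_s) \sim m^{-1}V^{-1/3}\alpha_m^{-1/3}$, each internal connection $u_j\stackrel{\le 2^{L(j)}r/k}{\lr}v_j$ contributes via \eqref{eq:IntVolume} a factor $O(2^{L(j)}r/k)$, and summing over the choice of each $u_{j+1}$ among neighbors of $v_j$ gives $k$ factors of $m$; the number of length-profile maps $L$ is at most $C^k$ by a binomial computation as in \eqref{eq:number_maps_L}. The powers of $m$ cancel against $(p_c-p_s)^k$, leaving roughly $(C' r/k)^k \cdot (\chi(p_s)/V^{1/3})^{\text{something}}$. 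To force the cycle into a \emph{large} component I pin two further vertices as endpoints of two disjoint $H_{p_c}$-paths from the cycle (contributing two extra factors of $\chi(p_c)=O(V^{1/3})$), and then declare there must be at least $(\tau V^{2/3})^2$ such pinned pairs; Markov's inequality finishes. Plugging $r = Ck\chi(p_s)\log(V/\chi(p_s)^3)$ and $k \le \alpha V^{1/3}/\chi(p_s)$, the base of the exponential becomes $O(\alpha \log(V/\chi(p_s)^3)) = O(\alpha \log \alpha_m^{-1})$, which we can make $\le 1/2$ by choosing $\alpha$ small (noting $\alpha$ may depend on $m$ only through a tending-to-zero slack, so really we need $\alpha$ small plus the $\log$ factor absorbed --- a minor point to be careful about, possibly requiring $\alpha$ to be a small constant times $1/\log\alpha_m^{-1}$, or a two-scale splitting of short vs. very short cycles).

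\textbf{Main obstacle.} The delicate point is the handling of the logarithmic factor $\log(V/\chi(p_s)^3) = \Theta(\log\alpha_m^{-1})$ coming from the maximal subcritical cluster diameter: it multiplies the length scale $r$, and hence appears in the base of the exponential $(C'r/k)^k$. For a \emph{constant} $\alpha$ this base is $\Theta(\alpha\log\alpha_m^{-1})$, which is \emph{not} bounded by $1/2$ uniformly in $m$. The fix --- which I expect to be the technically fiddly part --- is to split the cycle-length range: for $k$ below some threshold $k_0 = c V^{1/3}/(\chi(p_s)\log\alpha_m^{-1})$ we use the above bound with $\alpha$ replaced by $k/k_0$-type quantities and get a genuinely summable geometric series; for $k_0 \le k \le \alpha V^{1/3}/\chi(p_s)$ one needs a slightly different argument, perhaps noting that a cycle of such length is itself a geodesic-type object so the internal-length bound $\ell_j$ can be replaced by a better deterministic bound, or simply iterating \cref{lem:DCompTightness} (a cycle of length in $[k_0, \alpha V^{1/3}/\chi(p_s)]$ contains two vertices at $G_\rcomp$-distance $\ge k_0/2$ but $H_{p_c}$-distance $\le \alpha V^{1/3} \cdot C\log$, contradicting \cref{lem:DCompTightness} with appropriate parameters). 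I would aim to route the whole lemma through \cref{lem:DCompTightness} and \eqref{eq:subcritMaxDiam}, which keeps the exponential-sum computation essentially identical to the one already done, modulo the careful bookkeeping of these logarithmic factors and the choice of $\alpha(\e,\tau)$ at the very end.
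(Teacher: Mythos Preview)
Your approach has a genuine gap that is more serious than the ``main obstacle'' you flag. When you bound each internal connection length $\ell_j$ by the subcritical diameter $C\chi(p_s)\log(V/\chi(p_s)^3)$ and apply \eqref{eq:IntVolume}, the per-step factor in the cycle sum is
\[
m(p_c-p_s)\cdot \frac{Cr}{k} \;\asymp\; V^{-1/3}\alpha_m^{-1/3}\cdot C\chi(p_s)\log\alpha_m^{-1} \;\asymp\; C\log\alpha_m^{-1}\,,
\]
since $r\le Ck\chi(p_s)\log(\cdot)$ forces $r/k\le C\chi(p_s)\log(\cdot)$ \emph{independently of $\alpha$}. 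So the base of the geometric sum is $\Theta(\log\alpha_m^{-1})\to\infty$, not $O(\alpha\log\alpha_m^{-1})$ as you write --- the $\alpha$ never enters the ratio $r/k$. The length-profile trick from \cref{lem:DCompTightness} buys nothing here because you have no global bound on $\sum_j\ell_j$ better than $k$ times the maximal term. Your proposed fixes (splitting the $k$-range, appealing to \cref{lem:DCompTightness}) do not close this: for every $k\ge 3$ the base already exceeds $1$, and \cref{lem:DCompTightness} goes in the wrong direction (it bounds $d_\rcomp$ by $d_\cube$, whereas on a short $G_\rcomp$-cycle you have no a~priori control on $d_\cube$).

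The paper's proof uses a different mechanism entirely. Instead of bounding the $p_s$-connections by length and invoking \eqref{eq:IntVolume}, it splits each connection according to whether its length is $\ge m_0$ or $<m_0$. In the main (``long'') case it uses the pointwise two-point bound \eqref{eq:UniformConnection}, $\proba_{p_s}(u\stackrel{\ge m_0}{\lr}v)\le (1+O(\alpha_m))\chi(p_s)/V$, which makes the per-step factor $(mp_c)\cdot(\chi(p_s)V^{-1/3}\alpha_m^{-1/3})=1+o(1)$; summing over $\ell\le \alpha V^{1/3}/\chi(p_s)$ then gives the required $O(\alpha V^{2/3})$. The ``short'' case is dispatched via the non-backtracking walk bound \eqref{eq:PercolationNBWRW} together with the hypothesis $m_0=O(V^{1/15}\alpha_m)$ of \cref{main_thm_general}, and is lower order. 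The large-component constraint is also handled more simply: the paper counts \emph{vertices} $x$ connected in $H_{p_c}$ to some short cycle (one extra factor of $\chi(p_c)$, not two), shows $\E X=O(\alpha V^{2/3})$, and finishes by Markov. The key point you are missing is that \eqref{eq:UniformConnection} --- not \eqref{eq:IntVolume} --- is what makes the cycle estimate close.
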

\begin{proof} Let $X$ denote the number of vertices $x$ that belong to a component of $G_\rcomp$ that contains a cycle of length at most $\alpha V^{1/3}/\chi(p_s)$. We will show that $\E X = O(\alpha V^{2/3})$ from which the lemma follows immediately by Markov's inequality. If $x$ is such a vertex, then there exist $3 \leq \ell \leq \alpha V^{1/3}/\chi(p_s)$ and vertices $A_1,\ldots,A_\ell$ of $G_\rcomp$ (that is, components of $H_{p_s}$) that form a cycle in $G_\rcomp$ (in that order) and additionally either $x\in \cup _{i \leq \ell} A_i$, or $x$ is connected in $H_{p_c}$ to $A_1$. We write respectively $X_1$ and $X_2$ for the two corresponding contributions to $X$.

The presence of such a cycle implies that there exist vertices $v_1,u_1,\ldots, v_\ell,u_\ell$ such that $v_i,u_i\in A_i$ and $(v_i, u_{i+1})$ is an edge that is closed in $H_{p_s}$ but open in $H_{p_c}$ for each $1 \leq i \leq \ell$ (where $u_{\ell+1} = u_1$). By \cite[Theorem 1.2, (1.17)]{BCHSS1} with high probability all the components of $H_{p_s}$ are of size at most $O(\chi(p_s)^2 \log(V/\chi(p_s)^3))$; multiplying this by $\ell \leq V^{1/3}/\chi(p_s)$ is $o(V^{2/3})$, hence, the contribution $\E[X_1]$ of the vertices $x$ lying inside the cycle (in $\cup_{i\le \ell} A_i$) is at most $o(V^{2/3})$. 

We now proceed to bound the contribution $\E[X_2]$ of the vertices $x\not \in \cup_{i\leq \ell} A_i$. Observe that, in this case, there must exist $z \in \cup_{i\le \ell} A_i$ and $z' \not \in \cup_{i \leq \ell}A_i$ such that $(z,z')$ is an edge that is closed in $H_{p_s}$ but open in $H_{p_c}$ and $x$ is connected to $z'$ in $H_{p_c}$.  Without loss of generality, we will assume that $z\in A_1$. We obtain that there exist vertices $v_1,u_1,\ldots, v_\ell,u_\ell,z,z',w$ such that $(v_i,u_{i+1})$ and $(z,z')$ are edges such that the following events occur disjointly (see Figure \ref{Fig:5.10}): 
\begin{enumerate} 
  \item The edges $\{(v_i,u_{i+1})\}_{i=1}^\ell$ and $(z,z')$ are closed in $H_{p_s}$ but open in $H_{p_c}$. 
  \item There exists $w$ such that $\{v_1\lr w\}\circ\{u_1 \lr w\}\circ \{w \lr z\}$ in $H_{p_s}$.
  \item For any $2 \leq i \leq \ell$ the vertices $u_i$ and $v_i$ are connected by an open path in $H_{p_s}$.
  \item $x$ is connected to $z'$ in $H_{p_c}$. 
\end{enumerate}
\begin{figure}[t]
\centering
\includegraphics[scale=1.0]{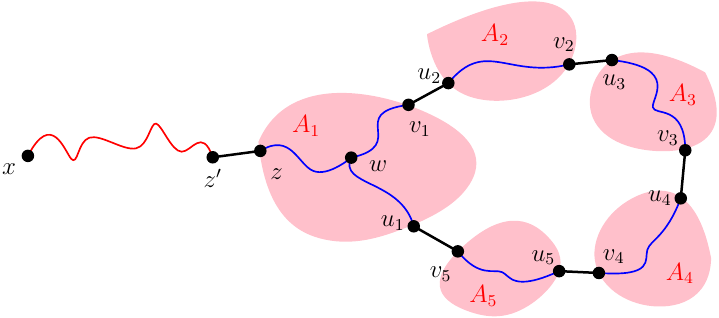}
\caption{\label{Fig:5.10} The cycle event from the proof of Lemma \ref{lem:GCompHaveNoSmallCycles} is represented. The red blobs represent some connected components of $H_{p_s}$. Blue paths are in $H_{p_s}$. Black segment are edges in $H_{p_c}\backslash H_{p_s}$. The path between $x$ and $z'$ represented in red is in $H_{p_c}$.
}
\end{figure}
By the BKR inequality we obtain that 
\begin{align*} 
\E X_2
\leq \sum_{\ell=1}^{\alpha V^{1/3}/\chi(p_s)} & (p_c-p_s)^{\ell+1} \sum_{\substack{x,(z,z'), w \\ v_1,u_1, \ldots, v_\ell,u_\ell}}  \proba_{p_c}(x \lr z') \proba_{p_s}(v_1 \lr w)\proba_{p_s}(u_1 \lr w) \\ 
&\times \proba_{p_s}(w \lr z)   \displaystyle \prod_{2 \leq i \leq \ell} \proba_{p_s}(v_i \lr u_{i}) \, .
\end{align*}
We now evaluate this sum. We begin by summing over $x$ to get a contribution of $\chi(p_c)=O(V^{1/3})$, then over $z$ to get a contribution of $\chi(p_s)$, then over $z'$ and get a contribution of $m$. This simplifies the above sum to
\begin{eqnarray}\label{eq:GCompNoCycles} \E X_2 \leq C \sum_{\ell} (p_c V^{-1/3}\alpha_m^{-1/3})^{\ell+1} m V^{1/3} \chi(p_s) \sum_{\substack{ w, v_1,u_1, \ldots, v_\ell,u_\ell}}  &\proba_{p_s}(v_1 \lr w)\proba_{p_s}(u_1 \lr w) \nonumber 
\\ &\times \displaystyle \prod_{2 \leq i \leq \ell} \proba_{p_s}(v_i \lr u_{i}) \, . \end{eqnarray}
Now, if any of the connection events above occur in a path of length at least $m_0$, we use \eqref{eq:UniformConnection} to bound that probability by $C\chi(p_s)/V$ and the rest of the sum is then easily seen to give a contribution of $V(m\chi(p_s))^\ell$. The contribution of this case to $\E X_2$ is thus
$$ \sum_{\ell=3}^{\alpha V^{1/3}/\chi(p_s)} C (p_c V^{-1/3}\alpha_m^{-1/3})^{\ell+1} V^{1/3} (m \chi(p_s))^{\ell +1} \chi(p_s) \, .$$
We have that $(mp_c)^\ell=1+o(1)$ (since $\ell \leq \alpha_m^{-1/3}$ and $p_c=m^{-1} + O(m^{-2})$), and since $\chi(p_s)=(1+O(\alpha_m))V^{1/3} \alpha_m^{1/3}$ we have $(\chi(p_s)V^{-1/3}\alpha_m^{-1/3})^{\ell+1} = 1+o(1)$, so we are left with a contribution of $\chi(p_s)V^{1/3}$ for each $\ell$, giving in total a contribution of $O(\alpha V^{2/3})$ to $\E X_2$ (and thus $\E X$), as desired. 

This is indeed the main contribution to $\E X_2$ and we are left to bound \eqref{eq:GCompNoCycles} in the case that all connection events occur in paths of length less than $m_0$. We use \eqref{eq:PercolationNBWRW} and bound the second sum in \eqref{eq:GCompNoCycles} by
$$ (1+o(1))^{\ell+2} m^\ell \sum_{\substack{t_{1}^{(a)},t_{1}^{(b)}, \\ t_2,\ldots,t_{\ell}=0}}^{m_0-1}  \sum_{\substack{w,v_1,\ldots v_\ell \\ u_1,\ldots, u_\ell}}  \pp^{t_{1}^{(a)}}(v_1,w) \pp^{t_{1}^{(b)}}(u_1,w) \pp^1(v_1,u_2) \prod_{i=2}^\ell \pp^{t_i}(u_i,v_i)\pp^1(v_{i},u_{i+1}) \, ,$$
and note that the terms $m\pp^1(v_i,u_{i+1})$ guarantee that $(v_i,u_{i+1})$ are edges. Using the natural generalization of \eqref{eq:NBWconvolution} this equals
$$ (1+o(1))^{\ell+2} V m^\ell \sum_{\substack{t_{1}^{(a)},t_{1}^{(b)}, \\ t_2,\ldots,t_{\ell}=0}}^{m_0-1} \pp^{\ell + t_1^{(a)} + t_1^{(b)} + t_2 \cdots + t_\ell}(v,v; t_1^{(a)},t_1^{(b)},1,t_2,1,\ldots,t_\ell,1) \, .$$
We proceed very crudely since we have a lot of room to spare, and bound the term in the sum by $1$ and just bound the above by $2^{\ell}Vm^\ell m_0^{\ell+1}$. We put this back into \eqref{eq:GCompNoCycles} and get a bound of 
$$ \sum_{\ell=3}^{\alpha V^{1/3}/\chi(p_s)}  C (2p_c V^{-1/3}\alpha_m^{-1/3}m m_0)^{\ell+1} V \cdot V^{1/3} \chi(p_s) \, .$$
Since we assume for \cref{main_thm_general} that $m_0=O(V^{1/15}\alpha_m$) and $p_c m\sim 1$, the factor in parenthesis goes to $0$ as $m\to \infty$. So the main contribution of the above sum comes from $\ell=3$, which is of order $O(1)(V^{-1/3}\alpha_m^{-1/3} m_0)^3 V^{4/3}\X(p_s)$. Inserting $\X(p_s)=o( V^{1/3})$ and $m_0=O(V^{1/15}\alpha_m)$ yields the desired bound of $o(V^{2/3})$. 
 \end{proof}

We now have everything in place to prove \cref{CompareCompHyp}.

\begin{proof}[Proof of \cref{CompareCompHyp}]
Recall that $U,V$ are independent uniformly drawn vertices of the $r$-th largest component $\C_r$ of $H_{p_c}$, where $r\in \N$ is a fixed number. We first apply \cref{Interpole1} and since $d_\comp(U,V) \leq L(\Gamma)$ we obtain that for any $\delta>0$ the event
\begin{equation}\label{eq:upper-bound_dcube}
 \chi(p_s)d_\comp(U,V) \leq d_\cube(U,V) + \delta V^{1/3}
\end{equation}
occurs with probability $1-o(1)$, so it remains to prove a lower bound on $d_\comp(U,V)$. 

To this end, we first note that the inequality $\chi(p_s)d_{\comp}(U,V) \geq d_\cube(U,V)-\delta V^{1/3}$ holds trivially if $d_\cube(U,V)\leq \delta V^{1/3}$. Also by \cref{thm:ghpTightness2} as $\tau \to 0$ we have
\[ \sup_m \proba(d_\cube(U,V)\geq \delta V^{1/3}, |\C_r|\leq \tau V^{2/3})\to 0 , \,\] 
so we may also assume that $|\C(U)|\geq \tau V^{2/3}$ for some small arbitrary $\tau>0$. Thus, it suffices to prove that 
for any $\tau>0$ and $\zeta>0$ we have 
\be\label{eq:MainProp} \proba\big(|\C_r|>\tau V^{2/3}, \chi(p_s)d_\comp(U,U')\geq  d_\cube(U,U')-\zeta V^{1/3}\big)=1- o(1) \, . \ee

Let $\e>0$ be an arbitrarily small but fixed constant. We will show that the event above has probability at least $1-O(\e)$. 
By \cref{lem:GCompHaveNoSmallCycles} we may take $\alpha=\alpha(\e,\tau)>0$ small enough such that as long as $m$ is large enough with probability at least $1-\e$ we have that
\begin{equation} |\C_r|>\tau V^{2/3} \implies \text{all cycles of $\C_r^{\rcomp}$ have length at least $8\alpha V^{1/3}/\X(p_s)$} \, ,\label{eq:NoShortCycles} \end{equation}
where $\C_r^{\rcomp}$ is just the $r$-th largest component of $G_\rcomp$ (the vertices of $\C_r^{\rcomp}$ are the $H_{p_s}$ components contained in $\C_r$). Assume this event holds. We now apply \cref{TransformTight} to obtain a constant $\beta=\beta(\alpha)>0$ so that with probability at least $1-\e$ we have 
\begin{equation}\label{eq:Compare3} \forall u,v\in \vs \quad d_\rcomp(u,v) \leq \beta V^{1/3}/\chi(p_s) \implies d_\cube(u,v) \leq \alpha V^{1/3} \, ,\end{equation}
and assume this event holds. For convenience we assume that $\beta V^{1/3}/\chi(p_s)\in \N$; otherwise we round and carry negligible errors, we omit the details.

Next by \cref{Interpole2} we know that $d_{\comp}(U,V)<\infty$ with probability at least $1-o(1)$. On this event we can take the shortest path between them in $G_\comp$ and split it into intervals of length $\beta V^{1/3}/\chi(p_s)$; more precisely,  
we find (random) vertices $x_1,\ldots, x_k$ such that $U=x_1$ and $V=x_k$ and 
\be \label{eq:SplitGeodesicToIntervals} \forall i=1,\ldots, k-2 \quad d_\comp(x_i,x_{i+1}) = \beta V^{1/3}/\chi(p_s) \qquad d_\comp(x_{k-1},x_k)\leq \beta V^{1/3}/\chi(p_s) \, , 
\ee
and
\be\label{eq:NumberOfIntervals} k \leq \beta^{-1}\chi(p_s) V^{-1/3} d_\comp(U,V) + 1  \, . \ee
We record the direct consequence of our choice of $\beta$ in \eqref{eq:Compare3} and our choice of the $x_i$'s:
\be \label{CubeDistXiXiplus} d_\cube(x_i,x_{i+1}) \leq \alpha V^{1/3} \qquad \forall i=1,\ldots,k-1 \, .\ee

We think of $\e, \alpha,\beta$ (as well as $\tau$ and $\zeta$ from the statement of \eqref{eq:MainProp}) as constants from now on, and set $\delta_1 \in (0,\alpha)$ to be a small parameter that we will choose at the end of the proof as a function of these constants. By Lemma \ref{lem:DCompTightness} there exist $\delta_2 = \delta_2(\delta_1)\in(0,\delta_1)$ so that with probability at least $1-o(1)$ we have
\begin{equation}\label{eq:Compare2} \forall u,v\in \vs \quad d_\cube(u,v)\leq \delta_2 V^{1/3} \implies d_\rcomp(u,v) \leq \delta_1 V^{1/3}/\X(p_s) \, ,\end{equation}
and we assume this event holds.

Next we apply \cref{Lem:HTightness} to obtain $N=N(\e,\delta_2)$ large enough so that with probability at least $1-\e$ we have
\begin{equation} d_{H}^\cube(\C_r,\{U_{j}\}_{1\leq j \leq N}) \leq \delta_2 V^{1/3} \, , \nonumber \label{eq:HausdorffApprox} \end{equation}
where given $H_{p_c}$ the random variables $\{U_{j}\}$ are i.i.d.\ uniform vertices in $\C_r$ and $d_{H}^\cube$ stands for the Hausdorff distance in $(H,d_\cube)$ (see \cref{sec:intro}). We assume this event holds and in particular, 
this implies that for each $i=1,\ldots, k$ there exists $j_i$ so that 
\be\label{eq:CubeDistAiUi} d_\cube(x_i, U_{j_i}) \leq \delta_2 V^{1/3} \, ,\ee
where we set $U_{j_1}=U=x_1$ and $U_{j_{k}}=V=x_k$. It follows by \eqref{eq:Compare2} that
\be\label{eq:CompDistAiUi} d_\rcomp(x_i, U_{j_i}) \leq \delta_1 V^{1/3}/\chi(p_s) \, ,\ee
and also \eqref{CubeDistXiXiplus} together with the triangle inequality and the fact that $\delta_2\leq \alpha$ imply that
\be\label{eq:CubeDistUiUiplus} d_\cube(U_{j_i},U_{j_{i+1}}) \leq (2\delta_2+\alpha) V^{1/3} \leq 3\alpha V^{1/3} \qquad \forall i=1,\ldots,k-1 \, .\ee

We now apply \cref{Interpole1} and obtain that with probability $1-o(1)$ for each pair $j_{i},j_{i+1}$ there exists a path in $G_\rcomp$ (in fact in $G_\comp$) between (the $H_{p_s}$ components of) $U_{j_i}$ and $U_{j_{i+1}}$ of length $L =L(i)$ satisfying
\begin{equation} d_\cube(U_{j_i},U_{j_{i+1}})-\delta_2 V^{1/3}\leq \chi(p_s) L\leq d_\cube(U_{j_i},U_{j_{i+1}})+\delta_2 V^{1/3} \, .  \label{eq:ConstructPath}  \end{equation}
By \eqref{eq:NoShortCycles} we learn that if $L \leq 4\alpha V^{1/3}/\chi(p_s)$, then $L= d_\rcomp(U_{j_i},U_{j_{i+1}})$. This bound on $L$ indeed holds by \eqref{eq:CubeDistUiUiplus} and the second inequality in \eqref{eq:ConstructPath}. Thus, for each $i=1,\ldots, k-1$ we obtain
$$ |\chi(p_s) d_\rcomp(U_{j_i},U_{j_{i+1}}) - d_\cube(U_{j_i},U_{j_{i+1}}) | \leq \delta_2 V^{1/3} \, .$$
By \eqref{eq:CubeDistAiUi} and \eqref{eq:CompDistAiUi} this implies that for each $i=1,\ldots, k-1$ 
$$ |\chi(p_s) d_\rcomp(x_i,x_{i+1}) - d_\cube(x_i,x_{i+1}) | \leq (2\delta_1+3\delta_2) V^{1/3} \leq 5\delta_1 V^{1/3} \, .$$

We now bound using this and the triangle inequality
$$ d_\cube(U,V) \leq \sum_{i=1}^{k-1} d_\cube(x_i, x_{i+1}) \leq \sum_{i=1}^{k-1} \chi(p_s)d_\rcomp(x_i,x_{i+1}) + 5\delta_1 k V^{1/3} \, .$$
We now use \eqref{eq:SplitGeodesicToIntervals}, \eqref{eq:NumberOfIntervals} and that $d_\rcomp\le d_\comp$ to bound from above the first term on the right-hand side by $\chi(p_s) d_\comp(U,V)$. 
The second term is bounded by $5\delta_1 \beta^{-1}\chi(p_s)d_{\comp}(U,V)+5 \delta_1 V^{1/3}$ using \eqref{eq:NumberOfIntervals}. Put together 
$$ d_\cube(U,V) \leq (1+5\delta_1 \beta^{-1}) \chi(p_s)d_\comp(U,V) + 5 \delta_1 V^{1/3} \, ,$$
or alternately $\chi(p_s)d_\comp(U,V) \geq d_\cube(U,V) - 5\delta_1(\beta^{-1} \chi(p_s) d_\comp(U,V)+ V^{1/3})$.
By \eqref{eq:MaxDiamCrit}, there exists $A=A(\e,\lambda)<\infty$ such that the diameter of any cluster in $H_{p_c}$ is at most $AV^{1/3}$  with probability at least $1-\e/2$; in this case, in particular, $d_\cube(U,V) \leq AV^{1/3}$. We now conclude the proof by choosing $\delta_1>0$ small enough so that first $5\delta_1 < \zeta/2$ and second
\[5\delta_1 \beta^{-1} \chi(p_s) d_\comp(U,V) \le 5 \delta_1 \beta^{-1} (AV^{1/3}+\delta_1 V^{1/3})< \zeta V^{1/3}/2\,,\] 
where the first inequality uses the fact that $\chi(p_s)d_\comp(U,V)\le d_\cube(U,V)+\delta_1 V^{1/3}$ with probability at least $1-\e/2$ by \eqref{eq:upper-bound_dcube}.
\end{proof}
 
\subsection{Proof of Theorem \ref{thm:main_metric}} \label{sec:proof_main_thm_metric}

In the following, in order to emphasize the dependence in $m$, we write $M^m_i=(\C_i,V^{-1/3}d_i^\cube,\mu_i^\cube)$ and $M_i$ for the $i$th largest connected component of $H_m$. We also write $\MM^m=(M_i^m)_{i\ge 1}$. Recall that $\MM_\lambda$ denotes the limit vector. We will write $M_i^\infty$, $i\ge 1$, for the components of $\MM_\lambda$, and let $\diam(M_i^\infty)$ and $|M_i^\infty|$ for the corresponding diameter and mass. Our aim is to prove that $\MM^m \to \MM_\lambda$ in distribution for the $L^4$ GHP topology, and we proceed by successive strengthenings. 

We start by proving that $\MM^m$ converges in distribution to $\MM_\lambda$ for the product Gromov--Prokhorov topology, that is: for every fixed finite $S\subset \N$, the collection $(M_i^m)_{i\in S}$ converges to $(M_i^\infty)_{i\in S}$, where the convergence of each component is with respect to the GP topology. To do so, denote by $d_i^\comp$ the shortest path metric in $\C_i^\comp$ and by $\mu_i^\comp$ the measure on $\C_i^\comp$ defined by $\mu_i^\comp(A) = V^{-2/3}|A|$ and lastly write $M_i^\comp$ for the mm-space 
$$ M_i^{\comp} = \Big(\C_i, \chi(p_s)V^{-1/3} d_i^\comp, \mu_i^\comp\Big) \, ,$$
exactly as above \cref{pro:conv_GP_comp}. 
Fix a finite subset $S\subseteq \N$, and let $\ell\ge 1$ be an arbitrary natural number. For each $i\in S$, let $(\xi^{\cube}_{i,j})_{j=1}^\ell$ be $\ell$ i.i.d.\ uniformly random vertices of $\C_i$. For each $i\in S$ and $j \in [\ell]$ let $\xi^\comp_{i,j}$ be the component containing $\xi^\cube_{i,j}$ in $H_{p_s}$ as long as this component is of size at least $M_s$, so that $\xi^\comp_{i,j}\in \C_i^\comp$; if this component is of smaller size, let $\xi^\comp_{i,j}$ be an independent sample of a uniform vertex in $\C_i^\comp$. Thus, for each $i\in S$, we obtain a coupling between $\ell$ i.i.d.\ uniform vertices $(\xi^{\cube}_{i,j})_{j=1}^\ell$ in $\C_i$ and $\ell$ i.i.d.\ uniform vertices (components) in $\C_i^\comp$. \cref{CompareCompHyp} implies that 
\begin{equation}\label{eq:discr_distance_matrices2}
\max_{i\in S}\max_{1\le j,k\le \ell}\left\{ \left |\frac{\X(p_s)}{V^{1/3}}d_i^\comp(\xi^\comp_j, \xi^\comp_k)-\frac{1}{V^{1/3}}d_i^{\cube}(\xi^\cube_j, \xi^\cube_k) \right |\right\} \to 0 \qquad \text{in probability}\,.
\end{equation}
We deduce the desired convergence in probability of $(M_i^m)_{i\in S}$ to $(M_i^\infty)_{i\in S}$ using \cref{equivGP}, and hence the convergence of $\MM^m$ to $\MM_\lambda$ in the product GP topology. 

We now prove the convergence of $\MM^m$ to $\MM_\lambda$ in the product GHP topology by relying on \cref{GP=>GHP}. Let $S\subset \N$ be finite. The first step consists in using \cref{thm:ghpTightness2} to prove that, for every $\delta>0$, 
\[\max_{i\in S} \max_{x\in \C_i} \frac{V^{2/3}}{|B(x,\delta V^{1/3})|}\]
is tight. For every $K>0$, we have 
\begin{align*}
	\proba\Big(\max_{i\in S} \max_{x\in \C_i} \frac{V^{2/3}}{|B(x,\delta V^{1/3})|}>K\Big) 
	&\le  \proba\Big(\max_{i\in S} \max_{x\in \C_i} \frac{V^{2/3}}{|B(x,\delta V^{1/3})|}>K, \partial B(x,\delta V^{1/3})\ne \emptyset \Big) \\
	& + \proba\Big(\max_{i\in S} \max_{x\in \C_i} \frac{V^{2/3}}{|B(x,\delta V^{1/3})|}>K, \partial B(x,\delta V^{1/3})=\emptyset \Big)\,.
\end{align*}
The first term in the right-hand side can be made as small as we want, uniformly in $m$, by choice of $K$ by \cref{thm:ghpTightness2}. On the other hand, the second term is bounded above by $\proba(\min_{i\in S} |\C_i|< V^{2/3}/K)$, which can also be made arbitrarily small, uniformly in $m$, also by choice of $K$ using the convergence of the component sizes in Theorem~\ref{main_thm}, and the well-known fact that the limit masses $(|\gamma_i|)_{i\ge 1}$ are all almost surely positive \cite{Aldous97}. The desired tightness follows, so that condition \emph{(ii)} of Lemma~\ref{GP=>GHP} is satisfied. Also we recall  and we deduce 
the claimed convergence of $\MM^m$ in the product GHP topology. 

Finally, we prove the convergence of $\MM^m$ to $\MM_\lambda$ in the $L^4$ topology. Using Skorohod representation theorem, consider a space where the convergence in the product GHP topology occurs almost surely. In that space, we now prove that $\distGHP(\MM^m; \MM_\lambda)$ tends to zero in probability. Observe first that (see e.g. \cite[Section 2.1]{Addario2017MST}) for any metric space $A$, we have $\dGHP(A,\emptyset)\le \diam(A)+|A|$, where $\emptyset$ denotes the trivial measured metric space (one point of mass zero) and $|A|$ is the total mass of $A$. It follows readily by the triangle inequality that, for any $i\ge 1$, $4^{-3} \dGHP(M_i^m,M_i)^4 \le \diam(M_i^m)^4+|M_i^m|^4+\diam(M_i^\infty)^4+|M_i^\infty|^4$. Let now $\epsilon,\eta>0$ be arbitrary. By \cref{GHP4tight}, there exists a $k'\in \N$ such that
\begin{align*}
\limsup_{m} \proba\Big(4^4 \sum_{i>k'} \diam(M_i^m)^4 > \epsilon\Big) < \eta/8 
\quad \text{and} \quad
\limsup_{m} \proba\Big(4^4 \sum_{i>k'} |M_i^m|^4 > \epsilon \Big) < \eta/8 \,.
\end{align*}
Let then $k\ge k'$ be large enough that we also have for the limit vector $\MM_\lambda$, 
\begin{align*}
	\proba\Big(4^4 \sum_{i>k} \diam(M_i^\infty)^4 > \epsilon\Big) < \eta/8 
\quad \text{and} \quad
\proba\Big(4^4 \sum_{i>k} |M_i^\infty|^4 > \epsilon \Big) < \eta/8 \,.
\end{align*}
It follows that, for this value of $k$, for all $m$ large enough, we have
\begin{align*}
	\proba(\distGHP(\MM^m; \MM_\lambda)> 2\epsilon) 
	\le \proba\bigg(\sum_{1\le i \le k} \dGHP(M_i^m, M_i^\infty)^4 > \epsilon\bigg) + \eta \,.
\end{align*}
The almost sure convergence of $\MM^m$ to $\MM_\lambda$ in the product topology implies that the first term in the right-hand side above tends to zero as $m\to\infty$, which completes the proof.

\bibliographystyle{unsrt}


\section*{Acknowledgments} The first and third authors are supported by ERC consolidator grant 101001124 (UniversalMap) as well as ISF grants 1294/19 and 898/23. We thank Eleanor Archer and Matan Shalev for useful discussions.

\bigskip

\noindent \textsc{Department of Mathematical Sciences, Tel Aviv University, Israel} 

\vspace{.1cm}

\noindent \textsc{LPSM, Sorbonne Universit\'e, and IUF, France} 

\vspace{.1cm}

\noindent  \textit{Emails:} \\
\texttt{ablancrenaudiepro@gmail.com} \\
\texttt{nicolas.broutin@sorbonne-universite.fr} \\
\texttt{asafnach@tauex.tau.ac.il} \par 
\addvspace{\medskipamount}



\end{document}